\newtheorem{thm}{Theorem}[section]
\newtheorem{lemma}[thm]{Lemma}
\newtheorem{prop}[thm]{Proposition}
\newtheorem{coro}[thm]{Corollary}
\newtheorem{classify}[thm]{Classification}
\theoremstyle{definition}
\newtheorem{defi}[thm]{Definition}
\newtheorem{rem}[thm]{Remark}
\def\Z{\mathds Z}
\def\Q{\mathds Q}
\def\R{\mathds R}
\def\phi{\varphi}
\def\<{{\langle}}
\def\>{{\rangle}}
\newcommand{\vol}[1]{\mathrm{vol}(#1)}
\newcommand{\area}[1]{\mathrm{area}\left(#1\right)}
\newcommand{\interior}[1]{\mathrm{int}(#1)}
\newcommand{\ceil}[1]{\left\lceil #1 \right\rceil}
\newcommand{\ehr}[2]{\mathrm{ehr}_{#1}\left(#2\right)}
\newcommand{\denom}[1]{\mathrm{denom}(#1)}
\newcommand{\conv}[1]{\mathrm{convhull}\left(#1\right)}
\newcommand{\inthull}[1]{\mathrm{inthull}\left(#1\right)}
\newcommand{\ldist}[2]{\mathrm{ldist}_{#1}(#2)}
\begin{document}
	
\title[Quasi-period collapse in half-integral polygons]{Quasi-period collapse in half-integral polygons}
\author[Martin Bohnert]{Martin Bohnert}
\address{Mathematisches Institut, Universit\"at T\"ubingen,
Auf der Morgenstelle 10, 72076 T\"ubingen, Germany}
\email{martin.bohnert@uni-tuebingen.de}
\maketitle
\thispagestyle{empty}
	
\begin{abstract}
A half-integral polygon with quasi-period collapse behaves similarly to a lattice polygon in the sense that the number of lattice points in its integer dilates can be calculated as values of a polynomial, its Ehrhart polynomial. As a main result, we classify the Ehrhart polynomials of all half-integral non-lattice polygons with quasi-period collapse. In particular, we obtain that for any positive integer $i$, the polynomial $\frac{4i+5}{2}t^2+\frac{2i+7}{2}t+1\in \Q[t]$ is an Ehrhart polynomial of a rational polygon, which was an open question for $i>1$.
	
We also study some extreme cases in detail. In particular, we show that up to affine unimodular equivalence there exist exactly $30$ half-integral non-lattice polygons with quasi-periodic collapse with exactly one interior lattice point, which are the dual polygons of the $30$ LDP polygons of Gorenstein index $2$. Furthermore, we classify all half-integral polygons with quasi-period collapse with at most $6$ interior lattice points or with $i\geq 1$ interior lattice points and the maximum possible number $2i+7$ of boundary lattice points.
\end{abstract}
		
\section{Introduction}

Rational non-lattice polytopes, whose number of lattice points in their integer dilates can be calculated as the values of a polynomial as for lattice polytopes, first appeared as random single examples, e.g., the half-integral polytope with the vertices $(0,0,0)$, $(1,0,0)$, $(0,1,0)$, $(1,1,0)$ and $(\frac{1}{2},0,\frac{1}{2})$ in Stanley's \textit{Enumerative Combinatorics} \cite[4.6.10]{Sta12}. In \cite{DLMA04} some Gelfand-Tsetlin polytopes, a class of polytopes relevant to representation theory, are seen as the first infinite family of such \textit{pseudo-integral} non-lattice polytopes. Systematic constructions in \cite{MAW05} led to examples for any given denominator of the rational polytope in any dimension greater than $1$ and to a better understanding of the $2$ dimensional case. An explanation of the more general phenomena of quasi-period collapse by certain piecewise affine unimodular transformations between the rational polytope and a lattice polytope was proposed in \cite{HMA08}. For the special case of duals of LDP polygons, \cite{KW18} explained quasi-period collapse with methods of algebraic geometry motivated by mirror symmetry. The classification of Ehrhart quasi-polynomials of half-integral polygons was started in \cite{Her10}, and in \cite{MAM17} the classification of Ehrhart polynomials of pseudo-integral polygons was started, leaving open whether there are Ehrhart polynomials of the form
\begin{align*}
\frac{4i+k-2}{2}t^2+\frac{2i+k}{2}t+1\in \Q[t]
\end{align*}
for $i\geq 1, k\geq 7$ and $(i,k)\neq (1,7)$. Our main theorem will show in particular that there are half-integral polygons with quasi-period collapse and such an Ehrhart polynomial with $i>1, k=7$.

Why are we focusing on half-integral polygons here? In a sense, they lead to the simplest polytopes with quasi-period collapse, since according to \cite[2.1.]{MAW05} there is no quasi-period collapse in dimension $1$. Their denominator is only two, so half-integral polygons with quasi-period collapse are automatically pseudo-integral. Additionally, we will have an easy criterion to detect quasi-periodic collapse for them. We should emphasize that half-integral polygons are relevant in several areas of combinatorics. For example, they naturally arise when studying lattice 3-polytopes of lattice width $2$, because then we have a half-integral polygon as a natural 'midpolygon'. This happens, for instance, when classifying maximal lattice 3-polytopes without interior lattice points in \cite{AKW17} or when studying the Fine interior of lattice 3-polytopes of lattice width $2$ in \cite{Boh24b}.

We will now begin to introduce our notation and provide the necessary background of Ehrhart theory for our topic. For more information on Ehrhart theory, see \cite{Sta12}, \cite{BR15}, or \cite{HNP12}.

Let be $n\in \Z_{\geq 1}$ and $P\subseteq \R^n$ be a full-dimensional \textit{rational polytope}, i.e., the convex hull of finitely many points of $\Q^n\subseteq \R^n$, whose affine hull is $\R^n$. We denote convex hulls by $\conv{\cdot,\dotsc,\cdot}$. If $P$ can even be described as the convex hull of finitely many points of the lattice $\Z^n \subseteq \R^n$, then we call $P$ a \textit{lattice polytope}. For each rational polygon $P \subseteq \R^n$, the smallest $d\in \Z_{\geq 1}$ such that $dP:=\{d\cdot x\in \R^n \mid x\in P\}$ is a lattice polytope is called the \textit{denomiator of $P$} and is denoted by $\denom{P}$. We are particularly interested in the case of polygons, i.e. $n=2$, and especially in the case $n=2, \denom{P}=2$, where we call $P$ a \textit{half-integral polygon}.

To study the interaction between the rational polytope $P\subseteq \R^n$ and the lattice $\Z^n$, we are interested in the \textit{number of lattice points} $l(P):=|P\cap \Z^n|$, the \textit{number of interior lattice points} $i(P):=|\interior{P}\cap \Z^n|$ and the \textit{number of boundary lattice points} $b(P):=|\partial P\cap \Z^n|$ of $P$, where $\interior{P}$ is the topological interior and  $\partial P$ is the topological boundary of $P\subseteq \R^n$. If we denote a polygon by $P_{d,(i,b)}$, then it is a rational polygon with denominator $d$, $i$ interior and $b$ boundary lattice points. We study rational polygons only up to \textit{affine unimodular equivalence}, i.e. up to automorphisms of our lattice. Thus, two rational polygons $P\subseteq \R^n$ and $P'\subseteq \R^n$ are considered equivalent if and only if there exists an \textit{affine unimodular transformation}
\begin{align*}
T_{A,b}\colon \R^n\to \R^n, x\mapsto Ax+b, \qquad A\in \mathbf{GL}(n,\Z), b\in \Z^n
\end{align*}
with $T_{A,b}(P)=P'$. The numbers $l(P), i(P)$ and $b(P)$ are invariant under such affine unimodular transformations. Since $|\det(A)|=1$, the volume of $P$, denoted by $\vol{P}$ or $\area{P}$ for $n=2$, is also invariant.

\textit{Ehrhart theory} gives a connection between these invariants via a \textit{quasi-polynomial} of degree $n$. A quasi-polynomial of degree $n$ is a generalized polynomial 
\begin{align*}
q(t)=\sum_{k=0}^{n}q_k(t)t^k,
\end{align*}
where the coefficient functions $q_k(t)$ are periodic functions with integral periods, rational values on integers, and $q_n\neq 0$. The least common multiple of the periods of $q_0,\dotsc,q_n$ is called the \textit{quasi-period} of $q$. A quasi-polynomial is just a polynomial if its quasi-period is $1$. The main result of Ehrhart theory is as follows: 

\begin{thm}[\protect{\cite{Ehr62}, \cite[3.23]{BR15}}]\label{Ehrhart_theorem}
Let $P\subseteq \R^n$ be a rational polytope. 
Then there exists a quasi-polynomial of degree $n$ called the \textit{Ehrhart quasi-polynomial of $P$} 
\begin{align*}
\ehr{P}{t}=\sum_{k=0}^{n}e_k(t)t^k,
\end{align*}
with quasi-period dividing $\denom{P}$, $e_n(t)=\vol{P}$ and $\ehr{P}{k}=l(kP)$ for all positive integers $k$.
\end{thm}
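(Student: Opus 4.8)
The plan is to prove the theorem by encoding all the numbers $l(kP)$ into a single generating function, the \emph{Ehrhart series} $\sum_{k\geq 0} l(kP)\,z^k$, and showing that this series is a rational function whose shape forces the coefficients to be a quasi-polynomial. Write $d=\denom{P}$ and let $v_1,\dots,v_m\in\frac1d\Z^n$ be the vertices of $P$. The first step is to lift $P$ into one dimension higher: place a copy of $P$ at height $1$ and form the cone $C=\{\lambda(x,1)\in\R^{n+1}\mid \lambda\geq 0,\ x\in P\}$. The lattice points of $C$ lying at height $k$ are exactly the points $(y,k)$ with $y\in kP\cap\Z^n$, so the integer-point transform of $C$, specialized by sending the first $n$ variables to $1$ and the height variable to $z$, is precisely the Ehrhart series of $P$. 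This reduces everything to understanding lattice points in a rational cone.

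The second step is to compute that integer-point transform. Triangulating $P$ (using only its vertices) decomposes $C$ into simplicial subcones, and by inclusion--exclusion over the pieces and their shared faces it suffices to treat a single simplicial cone; a finite signed sum of rational functions of the required form is again of that form. For a simplicial cone with integral generators $u_1,\dots,u_{n+1}$, tiling by translates of the half-open fundamental parallelepiped $\Pi$ gives the well-known product formula
\begin{align*}
\sum_{w\in C\cap\Z^{n+1}} z^{w}=\frac{\sum_{w\in\Pi\cap\Z^{n+1}} z^{w}}{\prod_{j=1}^{n+1}\bigl(1-z^{u_j}\bigr)},
\end{align*}
with only finitely many lattice points in $\Pi$. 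The crucial point for controlling the quasi-period is the choice of generators: since every vertex $v_j$ satisfies $dv_j\in\Z^n$, one may take $u_j=(d\,v_j,\,d)\in\Z^{n+1}$, so after the specialization each factor $1-z^{u_j}$ becomes $1-z^{d}$. Hence the Ehrhart series is a proper rational function of the form $h(z)/(1-z^{d})^{n+1}$.

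The third step extracts the quasi-polynomial by partial fractions. All poles of $h(z)/(1-z^{d})^{n+1}$ lie at $d$-th roots of unity, each of order at most $n+1$, so the coefficient of $z^k$ is a $\C$-linear combination of terms $\zeta^{-k}\binom{k+j}{j}$ with $\zeta^{d}=1$ and $0\leq j\leq n$. Since $\binom{k+j}{j}$ is a polynomial of degree $j$ in $k$ and $\zeta^{-k}$ is periodic in $k$ with period dividing $d$, collecting these shows that $l(kP)$ agrees, for every $k\geq 0$, with a quasi-polynomial $\ehr{P}{t}=\sum_{r=0}^{n}e_r(t)t^r$ of degree $n$ whose coefficient functions $e_r(t)$ are periodic with period dividing $d$, i.e.\ the quasi-period divides $\denom{P}$.

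It remains to identify the top coefficient, and this is the one genuinely geometric input rather than a formal manipulation. Because $l(kP)$ counts lattice points in the dilate $kP$, a standard Riemann-sum comparison gives $l(kP)/k^n\to\vol{P}$ as $k\to\infty$; since the leading coefficient function $e_n(t)$ is periodic and the limit is the same along every residue class modulo $d$, this forces $e_n(t)$ to be the constant $\vol{P}$, and in particular $e_n\neq 0$ so the degree is exactly $n$. The main obstacle is really the second step: proving that the integer-point transform of a rational cone is a rational function with the stated denominator, and in particular arranging the cone generators so that the surviving poles are confined to $d$-th roots of unity, which is exactly what pins the quasi-period to $\denom{P}$ rather than to some larger quantity.
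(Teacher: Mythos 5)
The paper does not prove this statement; it is quoted as background from \cite{Ehr62} and \cite[3.23]{BR15}. Your argument --- coning over $P$ at height $1$, triangulating, tiling each simplicial cone by the half-open fundamental parallelepiped spanned by the integral generators $(d\,v_j,d)$ so that the Ehrhart series becomes a proper rational function with denominator $(1-z^d)^{n+1}$, and then reading off the quasi-polynomial from the partial-fraction expansion at $d$-th roots of unity --- is precisely the standard proof given in the cited reference, and it is correct, including the identification of the leading coefficient with $\vol{P}$ by the Riemann-sum limit along each residue class modulo $d$.
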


We call the vector with the periods of the coefficient functions $e_0,\dotsc, e_n$ of $\ehr{P}{t}$ as coefficients the \textit{period sequence of $P$} and the quasi-period of $\ehr{P}{t}$ the \textit{quasi-period of $P$}. Furthermore, we say that $P$ has \textit{quasi-period collapse} if its quasi-period is strictly less than $\denom{P}$. If the quasi-period of $P$ is $1$, i.e., if the Ehrhart quasi-polynomial is actually a polynomial, then we call $P$ a \textit{pseudo-integral polytope}. In particular, the half-integral polygons with quasi-period collapse are exactly the pseudo-integral ones.

An important property of the Ehrhart quasi-polynomial is the following \textit{Ehrhart-Macdonald reciprocity}, which connects $\ehr{P}{k}$ and $i(kP)$.

\begin{thm}[\protect{\cite[(4.6).]{Mac71}, \cite[4.1]{BR15}}]\label{Ehrhart-Macdonald}
Let $P\subseteq \R^n$ be a rational polygon with the Ehrhart quasi-polynomial $\ehr{P}{t}$. Then we have for $k\in \Z_{\geq 1}$ that
\begin{align*}
i(kP)=(-1)^n\ehr{P}{-k}.
\end{align*}
\end{thm}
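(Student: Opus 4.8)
The plan is to deduce this reciprocity from the corresponding statement for rational cones, together with the rational-function description of the generating function of a quasi-polynomial. First I would pass from the polytope to its cone: embedding $\R^n$ at height $1$ inside $\R^{n+1}$, set $\mathrm{cone}(P)=\{\lambda\cdot(x,1)\in\R^{n+1}\mid x\in P,\ \lambda\in\R_{\geq 0}\}$, a pointed rational cone of dimension $n+1$. By construction the lattice points of $\mathrm{cone}(P)$ at height $t\in\Z_{\geq 0}$ are exactly $(tP\cap\Z^n)\times\{t\}$, while the lattice points of $\interior{\mathrm{cone}(P)}$ at height $t\in\Z_{\geq 1}$ are exactly $(\interior{tP}\cap\Z^n)\times\{t\}$. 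Encoding these in the integer-point transforms $\sigma_{\mathrm{cone}(P)}(z_1,\dotsc,z_{n+1})=\sum_m z^m$ and $\sigma_{\interior{\mathrm{cone}(P)}}$, and then specializing $z_1=\dotsb=z_n=1$ and $z_{n+1}=z$, I obtain the Ehrhart series $\mathrm{Ehr}_P(z)=\sum_{t\geq 0}l(tP)\,z^t$ together with its interior counterpart $\sum_{t\geq 1}i(tP)\,z^t$.

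Second, the crux is a reciprocity for cones: for any pointed rational $d$-dimensional cone $K$ one has the identity of rational functions $\sigma_{\interior K}(z)=(-1)^d\,\sigma_K(1/z)$. I would prove this by choosing a triangulation of $K$ into simplicial cones and passing to a half-open decomposition, so that each shared face is counted exactly once and the claim reduces to a single simplicial cone $K=\R_{\geq 0}v_1+\dotsb+\R_{\geq 0}v_d$ with primitive generators $v_i$. For such a cone the lattice points tile as translates of the half-open fundamental parallelepiped $\Pi$, giving $\sigma_K(z)=\sigma_\Pi(z)\big/\prod_{i=1}^d(1-z^{v_i})$, and likewise for the interior with the opposite half-open parallelepiped. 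Substituting $z\mapsto 1/z$ and using the elementary identity $1/(1-z^{-v_i})=-z^{v_i}/(1-z^{v_i})$, whose $d$ factors produce the global sign $(-1)^d$ while converting the closed fundamental parallelepiped into its open counterpart, yields the functional equation. Specializing to $K=\mathrm{cone}(P)$, where $d=n+1$, gives $\sum_{t\geq 1}i(tP)\,z^t=(-1)^{n+1}\,\mathrm{Ehr}_P(1/z)$.

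Finally, I would translate this back to the quasi-polynomial level. By Theorem \ref{Ehrhart_theorem} the function $t\mapsto l(tP)$ agrees on $\Z_{\geq 1}$ with the quasi-polynomial $\ehr{P}{t}$ of degree $n$, and the generating function of any quasi-polynomial is a rational function obeying the elementary reciprocity $\sum_{t\geq 1}\ehr{P}{-t}\,z^t=-\mathrm{Ehr}_P(1/z)$, which one proves class by class along each residue of $t$ modulo the quasi-period, reducing to the classical polynomial case. Combining this with the cone computation $\sum_{t\geq 1}i(tP)\,z^t=(-1)^{n+1}\,\mathrm{Ehr}_P(1/z)$ and comparing the coefficients of $z^k$ gives $i(kP)=(-1)^n\,\ehr{P}{-k}$, as asserted.

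I expect the main obstacle to be the cone reciprocity in the second step, specifically the bookkeeping of the half-open decomposition: one must arrange the triangulation so that the open and closed faces match up and each lattice point of $K$ and of $\interior K$ is counted exactly once, and then verify that this matching survives, with the correct global sign, under $z\mapsto 1/z$. The accompanying convergence issues — ensuring that all the series in play converge on a common region so that the manipulations are genuine identities of rational functions — also require attention, but are routine once the combinatorial decomposition has been set up correctly.
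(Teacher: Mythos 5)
The paper does not prove this statement; it cites it as a classical result of Macdonald, with \cite[4.1]{BR15} as a reference for the proof. Your proposal is a correct outline of precisely that standard argument --- coning over $P$, Stanley reciprocity $\sigma_{\interior{K}}(z)=(-1)^{d}\sigma_K(1/z)$ for pointed rational cones via half-open simplicial decompositions and fundamental parallelepipeds, and the elementary reciprocity $\sum_{t\geq 1}q(-t)z^t=-Q(1/z)$ for quasi-polynomial generating functions --- so there is nothing to compare beyond noting that you have reconstructed the proof from the cited source.
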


For lattice polygons, the relation between $\area{P}$, $i(P)$ and $b(P)$ is particularly simple and can be described by the following formula of Pick. 

\begin{thm}[\protect{\cite{Pic99}, \cite[2.8]{BR15}}]\label{Pick}
Let $P\subseteq \R^2$ be a lattice polygon. Then $P$ obeys Pick's formula
\begin{align*}
\area{P}=i(P)+\frac{b(P)}{2}-1.
\end{align*}
\end{thm}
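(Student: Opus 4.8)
The plan is to derive Pick's formula directly from the two Ehrhart-theoretic results just stated, Theorem \ref{Ehrhart_theorem} and Theorem \ref{Ehrhart-Macdonald}, rather than through the classical dissection argument. Since $P$ is a lattice polygon we have $\denom{P}=1$, so by Theorem \ref{Ehrhart_theorem} the quasi-period of $\ehr{P}{t}$ divides $1$ and the Ehrhart quasi-polynomial is an honest polynomial of degree $2$; write $\ehr{P}{t}=a t^2+b t+c$. Theorem \ref{Ehrhart_theorem} already identifies the leading coefficient as $a=\area{P}$. The first thing I would pin down is the constant term: the value $\ehr{P}{0}$ equals the Euler characteristic of the contractible polygon $P$, hence $c=1$. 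This is a standard consequence of the valuation-theoretic proof of Ehrhart's theorem and, crucially, does not itself rely on Pick's formula, so there is no circularity.

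With $a$ and $c$ in hand it remains to determine $b$, and for this I would evaluate the polynomial at $t=1$ and $t=-1$. On the one hand $\ehr{P}{1}=l(P)=i(P)+b(P)$, since every lattice point of $P$ is either interior or on the boundary. On the other hand, Ehrhart--Macdonald reciprocity (Theorem \ref{Ehrhart-Macdonald}) in dimension $n=2$ gives $i(P)=(-1)^2\ehr{P}{-1}=\ehr{P}{-1}$. Writing these two evaluations out yields the linear system
\begin{align*}
a+b+1 = i(P)+b(P), \qquad a-b+1 = i(P).
\end{align*}
Subtracting the second equation from the first gives $2b=b(P)$, i.e. $b=\tfrac{b(P)}{2}$, while adding them gives $2a+2=2i(P)+b(P)$, i.e. $a=i(P)+\tfrac{b(P)}{2}-1$. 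Since $a=\area{P}$, this is exactly the claimed formula.

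The only genuine input beyond the two cited theorems is the fact $c=\ehr{P}{0}=1$, so that is where I would expect to spend the care: one must justify it without invoking the very formula being proved. If one prefers a fully self-contained route, the classical alternative is to set $g(P):=i(P)+\tfrac{b(P)}{2}-1$, verify that both $g$ and $\area{\cdot}$ are additive when two lattice polygons are glued along a common edge, reduce an arbitrary lattice polygon to a union of unimodular triangles via a lattice triangulation, and check the single base case of a triangle of area $\tfrac12$ with $i=0$ and $b=3$, where indeed $g=\tfrac12=\area{\cdot}$. In that approach the main obstacle shifts to the combinatorial bookkeeping of the shared-edge lattice points in the additivity step and to establishing that every empty lattice triangle has area $\tfrac12$.
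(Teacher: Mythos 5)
Your derivation is correct, but it runs in the opposite logical direction from the paper. The paper does not prove Pick's theorem at all: it is quoted as a classical result with references to \cite{Pic99} and \cite[2.8]{BR15}, and immediately afterwards the paper uses Pick's formula as an \emph{input} to determine the constant term $e_0=\ehr{P}{1}-\area{P}-\frac{b(P)}{2}=1$ of the Ehrhart polynomial of a lattice polygon. You invert this: you take $e_0=\ehr{P}{0}=1$ as the known input and deduce Pick's formula from Theorems \ref{Ehrhart_theorem} and \ref{Ehrhart-Macdonald}. The algebra is right --- the system $a+b+1=i(P)+b(P)$, $a-b+1=i(P)$ does give $b=\frac{b(P)}{2}$ and $a=i(P)+\frac{b(P)}{2}-1$ --- and you correctly identify the one load-bearing external fact, namely that the constant term of the Ehrhart polynomial of a lattice polytope equals $1$. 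That fact is indeed established in \cite{BR15} (via the Ehrhart series or the Euler-characteristic valuation argument) without any appeal to Pick's theorem, so there is no circularity; but you should cite it explicitly rather than gesture at ``the valuation-theoretic proof,'' since the whole proof rests on it and Theorem \ref{Ehrhart_theorem} as stated in the paper only asserts $\ehr{P}{k}=l(kP)$ for \emph{positive} integers $k$, so $\ehr{P}{0}=1$ is genuinely extra information. What your route buys is a uniform Ehrhart-theoretic derivation consistent with how the rest of the paper computes coefficients (compare the proof of the proposition in Section 2); what the classical dissection proof you sketch as a fallback buys is independence from the comparatively heavy machinery of Ehrhart reciprocity. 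Either is acceptable here, since the paper treats the statement as citable background.
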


Ehrhart-Macdonald reciprocity and Pick's theorem \ref{Pick} allow us to calculate the Ehrhart polynomial of a lattice polygon $P$ from $\area{P}$ and $b(P)$. By \ref{Ehrhart_theorem} we have $\ehr{P}{t}=\area{P}t^2+e_1t+e_0$ and by Ehrhart-Macdonald reciprocity we have
\begin{align*}
e_1=\frac{1}{2}\left(\ehr{P}{1}-\ehr{P}{-1}\right)=\frac{1}{2}(l(P)-i(P))=\frac{b(P)}{2}.
\end{align*}
From Pick's theorem we obtain
\begin{align*}
e_0=\ehr{P}{1}-\area{P}-\frac{b(P)}{2}=1
\end{align*}
and thus 
\begin{align*}
\ehr{P}{t}=\area{P}t^2+\frac{b(P)}{2}t+1.
\end{align*}

To fully classify the Ehrhart polynomials for lattice polygons, another important connection between $b(P)$ and $i(P)$ is required. The following inequality of Scott was originally proved in \cite{Sco76} and since then several times, e.g., in \cite{HS09} or recently in \cite{Boh23}.

\begin{thm}[\cite{Sco76}]\label{Scott_inequality}
Let $P\subseteq \R^2$ be a lattice polygon with $i(P)>0$. Then either $b(P)\leq 2i(P)+6$ or $P$ is affine unimodular equivalent to the lattice triangle $\conv{(0,0), (3,0), (0,3)}$ and we have $i(P)=1, b(P)=9$.
\end{thm}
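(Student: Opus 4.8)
The plan is to reduce the boundary bound to a statement about area. By Pick's theorem \ref{Pick} we have $b(P)=2\area{P}-2i(P)+2$, so the inequality $b(P)\le 2i(P)+6$ is equivalent to $\area{P}\le 2i(P)+2$. The exceptional triangle $\conv{(0,0),(3,0),(0,3)}$ has $\area{P}=\tfrac{9}{2}$ and $i(P)=1$, hence violates $\area{P}\le 4$, so it must genuinely be excluded, and the task becomes to prove $\area{P}\le 2i(P)+2$ for every other lattice polygon with $i(P)>0$.

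Next I would normalize using the lattice width. Since $i(P)>0$, no primitive direction can have width $1$ (two consecutive parallel lattice lines enclose no interior point), so the lattice width $w$ satisfies $w\ge 2$. After an affine unimodular transformation I may assume the minimal width is attained vertically, so that $P$ lies in the strip $0\le y\le w$ and meets both bounding lines. Writing $c_j$ for the length of the horizontal slice $P\cap(\R\times\{j\})$, the crucial point is that a lattice polygon has all its vertices at integer height, so its (concave) width function is linear between consecutive integer heights. The trapezoidal rule is therefore exact and gives
\[
\area{P}=\frac{c_0+c_w}{2}+\sum_{j=1}^{w-1}c_j .
\]

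I would then count interior points slice by slice. For $1\le j\le w-1$ the interior lattice points lying on the line $y=j$ are exactly the integers strictly between the two endpoints of the slice, whence $i(P)=\sum_{j=1}^{w-1}I_j$ with $I_j\ge c_j-1$. Feeding these estimates into the area formula turns the desired inequality $\area{P}\le 2i(P)+2$ into a purely combinatorial inequality among the slice lengths $c_0,\dots,c_w$. Here one exploits the concavity relations between the $c_j$ together with the minimality of the vertical width, which forces the horizontal extent $\max_j c_j$ (and more generally every directional width) to be at least $w$; this prevents the polygon from being simultaneously tall and uniformly thin.

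The main obstacle will be the extremal analysis. In the generic situation the bound $I_j\ge c_j-1$ already closes the inequality with room to spare, but precisely at configurations where the interior points are few or nearly collinear this slack disappears---indeed the combinatorial inequality fails exactly for the slice data $(c_0,c_1,c_2,c_3)=(3,2,1,0)$ of the exceptional triangle. The delicate part is therefore to treat the small-width cases $w=2,3$ and the thin polygons by hand, either by sharpening the lower bound on $\sum_j I_j$ using the fractional offsets of the slice endpoints or by directly classifying the extremal polygons, and to verify that the unique polygon for which $\area{P}\le 2i(P)+2$ fails is $\conv{(0,0),(3,0),(0,3)}$.
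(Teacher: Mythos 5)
The paper does not prove this theorem at all: it is quoted from \cite{Sco76} with pointers to later proofs in \cite{HS09} and \cite{Boh23}, so there is no internal argument to compare against. Your reduction steps are correct and match the standard slicing proofs of Scott's inequality: Pick's formula turns $b(P)\le 2i(P)+6$ into $\area{P}\le 2i(P)+2$, the lattice width is at least $2$ when $i(P)>0$, the trapezoidal formula $\area{P}=\tfrac{c_0+c_w}{2}+\sum_{j=1}^{w-1}c_j$ is exact because all vertices sit at integer heights, and $I_j\ge c_j-1$ holds for the interior slices.

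However, the proposal stops exactly where the theorem's content begins, so there is a genuine gap. After substituting $I_j\ge c_j-1$ the target becomes $\tfrac{c_0+c_w}{2}+2\le\sum_{j=1}^{w-1}c_j+2(w-2)$ (equivalently, a lower bound on the total length of the interior slices), and you never establish this from the concavity of the slice function together with width-minimality; you only assert that ``in the generic situation'' there is slack and that the unique failure is the slice vector $(3,2,1,0)$. That uniqueness claim is precisely Scott's theorem and is left unproved: for example, for width $3$ concavity alone only gives $c_1+c_2\ge c_0+c_3$, which closes the inequality only when $c_0+c_3\ge 4$, so all configurations with $c_0+c_3\le 3$ (not just the exceptional triangle) still need the refined integrality bound $I_j=\lceil b_j\rceil-\lfloor a_j\rfloor-1$ or a direct classification, and the cases $w\ge 4$ need a separate argument. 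In addition, the auxiliary claim that width-minimality forces $\max_j c_j\ge w$ is not correct as stated: $\max_j c_j$ is the longest horizontal chord, which can be strictly smaller than the horizontal extent (a sheared thin parallelogram shows the two notions differ), so the inequality you actually get from width-minimality is on the extent $\max_P x-\min_P x$, not on the slice lengths. To make this a proof you would need to carry out the extremal analysis in full --- which is essentially the content of \cite{Sco76}, \cite{HS09} or \cite{Boh23}.
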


For all pairs $(i,b)\in \Z_{\geq 1}^2$ with $3\leq b\leq 2i+6$ there exists a lattice polygon $P_{1,(i,b)}$ with $i(P_{1,(i,b)})=i$ and $b(P_{1,(i,b)})=b$. 

For example, for $b=3$, we can use
\begin{align*}
P_{1,(i,3)}:=\conv{(0,1), (1,-1), (i+1,0)}
\end{align*}
and for	$i>0, 3<b\leq 2i+6$, we can use
\begin{align*}
P_{1,(i,b)}:=\conv{(0,-1),(b-4,-1), (i+1,0),(0,1)}.
\end{align*}
We also have for $b\geq 3$ the triangle
\begin{align*}
P_{1,(0,b)}:=\conv{(0,0),(b-2,0), (0,1)}
\end{align*}
with $i(P_{1,(0,b)})=0, b(P_{1,(0,b)})=b$.

\begin{figure}[H]
	\begin{tikzpicture}[x=0.4cm,y=0.4cm]
		\draw[step=2.0,black,thin,xshift=0cm,yshift=0cm] (-5,3) grid (29.9,-3);
		
		\draw [line width=1pt,color=black] (-4,2)-- (-2,-2);
		\draw [line width=1pt,color=black] (-2,-2)-- (0,0);
		\draw [line width=1pt,color=black] (0,0)-- (-4,2);
		\draw [line width=1pt,color=black] (-2,-2)-- (2,0);
		\draw [line width=1pt,color=black] (2,0)-- (-4,2);
		\draw [line width=1pt,color=black] (-2,-2)-- (4,0);
		\draw [line width=1pt,color=black] (4,0)-- (-4,2);
		
		\draw [fill=black] (-4,2) circle (2.5pt);
		\draw [fill=black] (-2,-2) circle (2.5pt);
		\draw [fill=black] (0,0) circle (2.5pt);
		\draw [fill=black] (2,0) circle (2.5pt);
		\draw [fill=black] (4,0) circle (2.5pt);
		\draw [fill=black] (-2,0) circle (2.5pt);
		
		\draw[color=black] (3,2) node[fill=white] {$P_{1,(i,3)}$ };

		\draw [line width=1pt,color=black] (8,2)-- (8,-2);
		\draw [line width=1pt,color=black] (8,-2)-- (12,0);
		\draw [line width=1pt,color=black] (12,0)-- (8,2);
		\draw [line width=2pt,color=black] (8,-2)-- (16,-2);
		\draw [line width=1pt,color=black] (12,0)-- (10,-2);
		\draw [line width=1pt,color=black] (12,0)-- (12,-2);
		\draw [line width=1pt,color=black] (12,0)-- (14,-2);
		\draw [line width=1pt,color=black] (12,0)-- (16,-2);

		\draw [fill=black] (8,2) circle (2.5pt);
		\draw [fill=black] (8,-2) circle (2.5pt);
		\draw [fill=black] (8,0) circle (2.5pt);
		\draw [fill=black] (10,0) circle (2.5pt);
		\draw [fill=black] (12,0) circle (2.5pt);
		\draw [fill=black] (10,-2) circle (2.5pt);
		\draw [fill=black] (12,-2) circle (2.5pt);
		\draw [fill=black] (14,-2) circle (2.5pt);
		\draw [fill=black] (16,-2) circle (2.5pt);
		
		\draw[color=black] (12,2) node[fill=white] {$P_{1,(1,b)}$ };

		\draw [line width=1pt,color=black] (20,0)-- (20,-2);
		\draw [line width=1pt,color=black] (20,0)-- (22,-2);
		\draw [line width=1pt,color=black] (20,0)-- (24,-2);
		\draw [line width=1pt,color=black] (20,0)-- (26,-2);
		\draw [line width=2pt,color=black] (20,-2)-- (26,-2);

		\draw [fill=black] (20,0) circle (2.5pt);
		\draw [fill=black] (20,-2) circle (2.5pt);
		\draw [fill=black] (22,-2) circle (2.5pt);
		\draw [fill=black] (24,-2) circle (2.5pt);
		\draw [fill=black] (26,-2) circle (2.5pt);
		
		\draw[color=black] (25,0) node[fill=white] {$P_{1,(0,b)}$ };
		
	\end{tikzpicture}
	\caption{Lattice polygons, which show how to produce families, which are affine unimodular equivalent to $P_{1,(i,3)}$, $P_{1,(1,b)}$ and $P_{1,(0,b)}$. }
\end{figure}
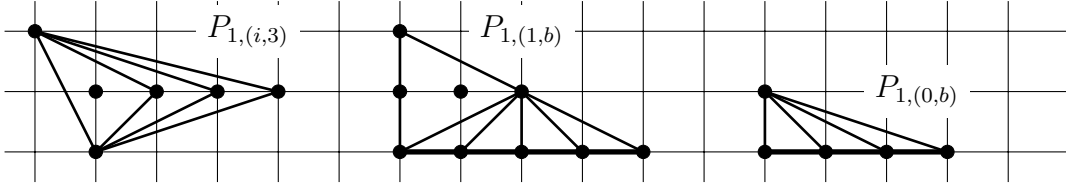

Thus, Scott's inequality gives a complete classification of Ehrhart polynomials of lattice polygons, as formulated in the following corollary. 

\begin{coro}[\protect{\cite[3.3.34]{HNP12}}]
The polynomial $e_2t^2+e_1t+e_0\in \Q[t]$ is an Ehrhart polynomial of a lattice polygon if and only if $e_0=1$ and there exists a pair
\begin{align*}
(i,b)\in \{0\}\times \Z_{\geq 3} \ \dot{\cup} \  \{(x,y)\in \Z_{\geq 0}^2 \mid x>0, \ 3\leq y\leq 2x+6 \} \ \dot{\cup} \ \{(1,9)\} 
\end{align*}
with $e_2=i+\frac{b}{2}-1$ and $e_1=\frac{b}{2}$.
\end{coro}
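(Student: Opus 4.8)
The plan is to prove the two implications of the equivalence separately, using the formula $\ehr{P}{t}=\area{P}t^2+\frac{b(P)}{2}t+1$ established just above, together with Pick's theorem \ref{Pick} and Scott's inequality \ref{Scott_inequality}; the realizability half then reduces to the explicit families already written down.

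First, for the necessity direction, suppose $e_2t^2+e_1t+e_0$ is the Ehrhart polynomial of a lattice polygon $P$. The computation preceding the corollary gives at once $e_0=1$, $e_1=\frac{b(P)}{2}$, and, via Pick's theorem, $e_2=\area{P}=i(P)+\frac{b(P)}{2}-1$. Setting $i:=i(P)$ and $b:=b(P)$, it remains to check that $(i,b)$ lies in the stated set. Since $P$ is full-dimensional it has at least three vertices, each of which is a boundary lattice point, so $b\geq 3$. If $i=0$ then $(i,b)\in\{0\}\times\Z_{\geq 3}$. If $i>0$, then Scott's inequality \ref{Scott_inequality} forces either $b\leq 2i+6$, placing $(i,b)$ in the middle set, or $P$ affine unimodular equivalent to $\conv{(0,0),(3,0),(0,3)}$, which gives the exceptional pair $(i,b)=(1,9)$.

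For the sufficiency direction, I would first observe that a candidate polynomial is completely pinned down by its pair $(i,b)$: from $e_1=b/2$ and $e_2=i+b/2-1$ one recovers $b=2e_1$ and $i=e_2-e_1+1$. Hence it suffices to exhibit, for each admissible pair $(i,b)$, a single lattice polygon with exactly $i$ interior and $b$ boundary lattice points, since its Ehrhart polynomial is then forced to equal $e_2t^2+e_1t+1$ by the displayed formula (and the invariants $i$, $b$, $\area{P}$ are preserved under affine unimodular equivalence). The families introduced above cover every case: $P_{1,(0,b)}$ realizes $i=0$, $b\geq 3$; $P_{1,(i,3)}$ realizes $i>0$, $b=3$; and $P_{1,(i,b)}$ realizes $i>0$, $3<b\leq 2i+6$. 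The leftover exceptional pair $(1,9)$ is realized by the triangle $\conv{(0,0),(3,0),(0,3)}$.

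The only genuine work, and the step I expect to demand the most care, is confirming that each listed construction really has its advertised invariants across the full stated parameter range: one must verify non-degeneracy at the smallest values of $i$ and $b$, and count the lattice points on the edges and in the interior exactly (most cleanly by applying Pick's theorem \ref{Pick} to each family to cross-check $\area{P}$ against $i+\frac{b}{2}-1$). Once this bookkeeping is settled, combining the two directions yields the classification.
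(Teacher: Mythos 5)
Your proposal is correct and matches what the paper does: the paper states this corollary as a direct consequence of the preceding computation of $\ehr{P}{t}=\area{P}t^2+\frac{b(P)}{2}t+1$ (via Pick's theorem and Ehrhart--Macdonald reciprocity), Scott's inequality \ref{Scott_inequality} for necessity, and the explicit families $P_{1,(0,b)}$, $P_{1,(i,3)}$, $P_{1,(i,b)}$ together with $\conv{(0,0),(3,0),(0,3)}$ for sufficiency. You have simply written out the assembly of these ingredients that the paper leaves implicit, and the remaining bookkeeping you flag (verifying the invariants of each family) is routine and checks out.
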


The main result of this paper is the following complete classification of Ehrhart polynomials of pseudo-integral polygons with denominator $2$.

\begin{thm}\label{main_result}
The polynomial $e_2t^2+e_1t+e_0\in \Q[t]$ is an Ehrhart polynomial of a pseudo-integral polygon with denominator $2$ if and only if $e_0=1$ and there exists a pair
\begin{align*}
(i,b)\in \{(0,3)\} \ \dot{\cup} \  \{(x,y)\in \Z_{\geq 0}^2 \mid x>0, \ 2\leq y\leq 2x+7 \} 
\end{align*}
with $e_2=i+\frac{b}{2}-1$ and $e_1=\frac{b}{2}$.
\end{thm}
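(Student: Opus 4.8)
The plan is to split the equivalence into three parts: the arithmetic shape of the polynomial, the necessary constraints on the pair $(i,b)=(i(P),b(P))$, and the explicit realization of every admissible pair. First I fix the shape. Writing the Ehrhart quasi-polynomial of a half-integral polygon $P$ as $\ehr{P}{t}=\area{P}t^2+e_1(t)t+e_0(t)$ with leading term $\area{P}$ by \ref{Ehrhart_theorem} and with $e_1,e_0$ of period dividing $2$, I evaluate at $\pm 1$ and at $\pm 2$. Combining \ref{Ehrhart-Macdonald} with $l(kP)=i(kP)+b(kP)$ and with Pick's theorem \ref{Pick} applied to the \emph{lattice} polygon $2P$, the four evaluations give that $e_1$ equals $b(P)/2$ on odd integers and $b(2P)/4$ on even integers, while $e_0$ equals $i(P)+\tfrac{b(P)}{2}-\area{P}$ on odd integers and $1$ on even integers. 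Hence $P$ is pseudo-integral exactly when the two coincidence conditions
\begin{align*}
\area{P}=i(P)+\tfrac{b(P)}{2}-1 \qquad\text{and}\qquad b(2P)=2\,b(P)
\end{align*}
hold, and then $\ehr{P}{t}=\bigl(i(P)+\tfrac{b(P)}{2}-1\bigr)t^2+\tfrac{b(P)}{2}t+1$. This is the asserted form with $e_2=i+\tfrac b2-1$, $e_1=\tfrac b2$, $e_0=1$, so the statement reduces to determining which pairs $(i,b)$ are realized.

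For the necessary constraints I argue as follows. Since $2P$ is a genuine two-dimensional lattice polygon, $b(2P)\geq 3$; as $b(2P)=2b(P)$ is even this forces $b(2P)\geq 4$ and hence $b(P)\geq 2$. If $i(P)=0$ the area condition gives $\area{P}=\tfrac b2-1$, which is positive only for $b\geq 3$, so $b=2$ is excluded; and for $b\geq 4$ the criterion together with Pick on $2P$ yields $i(2P)=b-3\geq 1$ and $b(2P)=2b=2i(2P)+6$, so $2P$ attains equality in Scott's inequality \ref{Scott_inequality}. I rule these out by running through the finitely many shapes in the equality case and checking that each forces $P$ either to contain an interior lattice point or to be a lattice polygon, contradicting $i(P)=0$ or $\denom{P}=2$; this isolates $(0,3)$. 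The remaining, and main, point is the upper bound $b(P)\leq 2i(P)+7$ for $i(P)\geq 1$; note that feeding the relations $i(2P)=4i(P)+b(P)-3$ and $b(2P)=2b(P)$ into Scott's inequality for $2P$ is vacuous, so a genuine Scott-type inequality for half-integral polygons is required.

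The hard part is precisely this half-integral Scott bound, which I expect to prove directly rather than by reduction to the lattice case. The plan is to adapt the lattice-width proof of \ref{Scott_inequality}: after placing $P$ in a position of small width, I control the half-integral boundary points level by level using that the interior lattice points constrain how many such levels can occur, and sum these contributions to obtain $b(P)\leq 2i(P)+7$, the extra $+1$ over the classical bound reflecting the finer half-integral structure. The near-equality analysis here is the most delicate step and simultaneously pins down the extremal shapes.

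Finally, for realizability I construct, for every admissible $(i,b)$, an explicit half-integral polygon of denominator $2$ satisfying the two criterion conditions. Away from the extreme I modify the lattice families from the introduction by replacing one vertex with a half-integral one so that exactly one extra lattice boundary point is gained and the denominator becomes $2$, then verify $\area{P}=i+\tfrac b2-1$ and $b(2P)=2b(P)$ directly; this sweeps out $2\leq b\leq 2i+6$ for each $i\geq 1$ as well as $(0,3)$. The decisive case is the maximal boundary count $b=2i+7$, where $\ehr{P}{t}=\tfrac{4i+5}{2}t^2+\tfrac{2i+7}{2}t+1$; here I give an explicit extremal family $P_{2,(i,2i+7)}$ and check the criterion, which settles the previously open existence question for $i>1$. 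I expect the construction and verification of this extremal family, together with the half-integral Scott inequality, to be the two substantive components of the proof.
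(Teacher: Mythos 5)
Your reduction of the theorem to the two coincidence conditions $\area{P}=i(P)+\tfrac{b(P)}{2}-1$ and $b(2P)=2b(P)$ is correct and is exactly the paper's Corollary \ref{pseudo_crit}, and your observation that Scott's inequality applied to $2P$ is vacuous correctly identifies why a genuinely new bound is needed. But the two components you yourself single out as ``the two substantive components of the proof'' are not actually supplied. For the upper bound $b(P)\leq 2i(P)+7$ you only announce a plan to ``adapt the lattice-width proof'' of Scott's inequality and control half-integral boundary points ``level by level''; no such argument is carried out, and nothing in the proposal guarantees that the adaptation yields $+7$ rather than some larger constant. The paper's proof of this bound (Theorem \ref{upper_bound_boundary_points}) is not a soft adaptation: it relies on the specific quantitative input of Lemma \ref{half_integral_coordinates_for_big_areas} (from \cite{Boh23}) that a half-integral polygon with $\area{P}\geq 2i(P)+3$ fits, after a unimodular map, into the strip $[0,\tfrac32]\times\R$, and then derives a contradiction by counting $i(2Q)$ inside an explicit trapezoid, using $i(2Q)=4i(P)+b(P)-3$ from Corollary \ref{int_bound_points}. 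Without this (or an equivalent worked-out estimate) the ``only if'' direction is open precisely at its crucial point. Likewise, the extremal family realizing $b=2i+7$ is asserted to exist (``here I give an explicit extremal family'') but never written down; since this is the case that settles the open question from \cite{MAM17}, it cannot be left implicit. The paper's single family $P_{2,(i,b)}=\conv{(0,\tfrac32),(0,1),(2i+7-b,0),(2i+4,0),(2i+2,\tfrac12)}$ covers all $3\leq b\leq 2i+7$ at once.

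A secondary problem is your treatment of $i(P)=0$. You propose to classify the polygons $2P$ attaining equality $b(2P)=2i(2P)+6$ in Scott's inequality and check each shape, calling these ``finitely many.'' They are not: for every $b\geq 4$ there are equality polygons with $i(2P)=b-3$ (e.g.\ $\conv{(0,-1),(2i+2,-1),(i+1,0),(0,1)}$), so this route requires a structural classification of all Scott-equality polygons, which you neither prove nor cite. The paper avoids this entirely via the integer hull: if the boundary lattice points of $P$ are non-collinear then $\inthull{P}$ is two-dimensional and Proposition \ref{int_hull} forces $i(P)>i(\inthull{P})\geq 0$, a contradiction; if they are collinear, a one-line computation with Pick's formula gives $d=\tfrac12$ and $b=3$, isolating $T_{2,(0,3)}$. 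I would recommend replacing your equality-case analysis by this argument and, above all, supplying the missing proof of the bound $b\leq 2i+7$ before the argument can be considered complete.
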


To see the difference between the Ehrhart polynomials of lattice polygons and of the pseudo-integral polygons of denominator $2$, we can visualize possible pairs $(i(P),b(P))\in \Z^2$, as done in figure \ref{possible_pairs}.

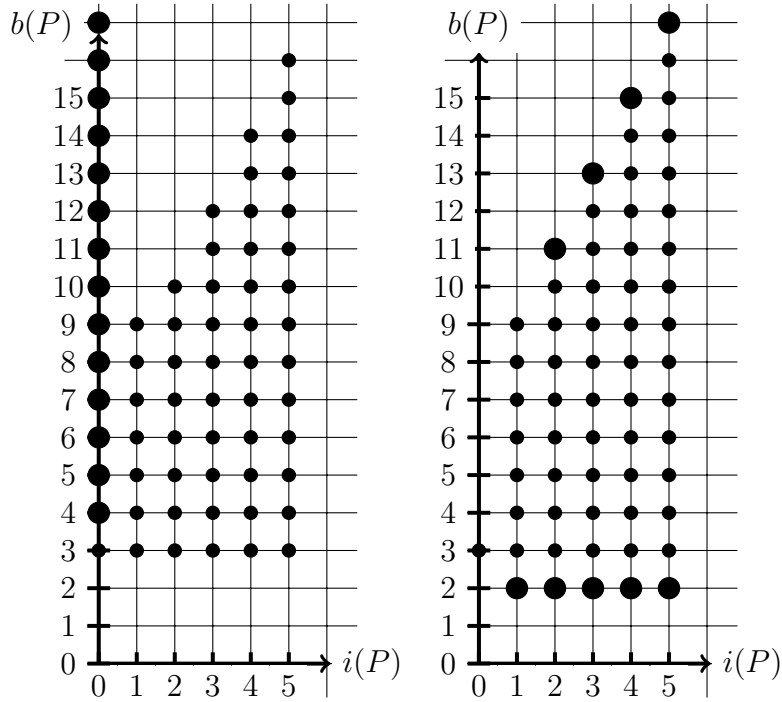
\begin{figure}[H]
	\begin{tikzpicture}[x=0.5cm,y=0.5cm]
		\draw[step=1.0,black,thin,xshift=0cm,yshift=0cm] (-0.9,17.5) grid (6.8,-0.9);
		\draw[step=1.0,black,thin,xshift=0cm,yshift=0cm] (9.1,17.5) grid (16.8,-0.9);
		
		\draw [line width=1.5pt,->] (0,0) -- (6.1,0);
		\draw [line width=1.5pt,->] (0,0) -- (0,16.7);
		
		\foreach \n in {0,...,5}{\draw [line width=1.5pt] (\n,-0.3) -- (\n,0.3);}
		\foreach \n in {0,...,5}{\draw[color=black] (\n,-0.6) node[fill=white] {$\n$ };}
		\foreach \n in {0,...,17}{\draw [line width=1.5pt] (-0.3,\n) -- (0.3,\n);}
		\foreach \n in {0,...,15}{\draw[color=black] (-0.8,\n) node[fill=white] {$\n$ };}
		
		\foreach \n in {3,...,17}{\draw [fill=black] (0,\n) circle (2.5pt);}
		\foreach \n in {4,...,17}{\draw [fill=black] (0,\n) circle (4pt);}
		\foreach \n in {3,...,9}{\draw [fill=black] (1,\n) circle (2.5pt);}
		\foreach \n in {3,...,10}{\draw [fill=black] (2,\n) circle (2.5pt);}
		\foreach \n in {3,...,12}{\draw [fill=black] (3,\n) circle (2.5pt);}
		\foreach \n in {3,...,14}{\draw [fill=black] (4,\n) circle (2.5pt);}
		\foreach \n in {3,...,16}{\draw [fill=black] (5,\n) circle (2.5pt);}
		
		\draw[color=black] (7.2,0) node[fill=white] {$i(P)$ };
		\draw[color=black] (-1.5,16.9) node[fill=white] {$b(P)$ };

		\draw [line width=1.5pt,->] (10,0) -- (16.1,0);
		\draw [line width=1.5pt,->] (10,0) -- (10,16.2);
		
		\foreach \n in {0,...,5}{\draw [line width=1.5pt] (\n+10,-0.3) -- (\n+10,0.3);}
		\foreach \n in {0,...,5}{\draw[color=black] (\n+10,-0.6) node[fill=white] {$\n$ };}
		\foreach \n in {0,...,15}{\draw [line width=1.5pt] (9.7,\n) -- (10.3,\n);}
		\foreach \n in {0,...,15}{\draw[color=black] (9.2,\n) node[fill=white] {$\n$ };}
		
		\foreach \n in {3}{\draw [fill=black] (10,\n) circle (2.5pt);}
		\foreach \n in {1,...,5}{\draw [fill=black] (10+\n,2) circle (4pt);}
		\foreach \n in {2,...,5}{\draw [fill=black] (10+\n,2*\n+7) circle (4pt);}
		\foreach \n in {3,...,9}{\draw [fill=black] (11,\n) circle (2.5pt);}
		\foreach \n in {3,...,10}{\draw [fill=black] (12,\n) circle (2.5pt);}
		\foreach \n in {3,...,12}{\draw [fill=black] (13,\n) circle (2.5pt);}
		\foreach \n in {3,...,14}{\draw [fill=black] (14,\n) circle (2.5pt);}
		\foreach \n in {3,...,16}{\draw [fill=black] (15,\n) circle (2.5pt);}

		\draw[color=black] (17.2,0) node[fill=white] {$i(P)$ };
		\draw[color=black] (10,16.9) node[fill=white] {$b(P)$ };
		
	\end{tikzpicture}
	\caption{\label{possible_pairs}All possible pairs $(i(P),b(P))$ with $i(P)\leq 5, b(P)\leq 17$ for lattice polygons on the left and for half-integral pseudo-integral polygons, which are not lattice polygons, on the right. Points with differences are marked with big nodes.}
\end{figure}

In this paper, mainly experimental methods are used. Since we can generate many examples of pseudo-integral polygons from data of suitable lattice polygons in \cite{KKN10}, \cite{Bae25} and \cite{BS24b}, we find suitable families of pseudo-integral polygons for our theorems just by looking at these examples. Moreover, the knowledge of area bounds and suitable coordinates for half-integral polygons from \cite{Boh23} proves to be very helpful for the proof of our main theorem.

The paper is organized as follows. In section 2, we describe the Ehrhart quasi-polynomial of a half-integral polygon and characterize the period-collapse for these polygons. We also show how to classify pseudo-integral polygons of denominator $2$ using huge lists of lattice polygons. In section $3$, we give a family of pseudo-integral polygons of denominator $2$ with exactly $2$ boundary lattice points, and we will see that there is only one pseudo-integral polygon of denominator $2$ without interior lattice points. In section $4$, we classify the pseudo-integral polygons of denominator $2$ with exactly one interior lattice point, which are duals of the $30$ LDP polygons of Gorenstein index 2, as we see in section $5$. We also get a formula for the sum of the numbers of boundary lattice points in the pseudo-integral polygon of denominator $2$ and its dual polygon there. In section $6$, we obtain a two-parameter family of pseudo-integral polygons of denominator $2$ with $i\geq 1$ interior and $3\leq b\leq 2i+7$ boundary lattice points, and for the extremal polygons with $2i+7$ boundary lattice points we give a complete classification in the last section.

\section{Ehrhart quasi-polynomials and quasi-period collapse for half-integral polygons}

Since the quasi-period divides the denominator of a rational polygon by \ref{Ehrhart_theorem}, the quasi-period of  a half-integral polygon $P\subseteq \R^2$ is either $1$ or $2$. So we can describe the Ehrhart quasi-polynomial of a half-integral polygon by two polynomials $\ehr{P,even}{t}$ and $\ehr{P,odd}{t}$ of degree $2$, which are defined by the values on integers $k\in \Z$ as
\begin{align*}
\ehr{P}{k}=\begin{cases}
\ehr{P,even}{k} & \text{ for } k \equiv 0 \mod 2\\
\ehr{P,odd}{k} & \text{ for } k \equiv 1 \mod 2.
\end{cases}
\end{align*}

The even part is easy to understand via the lattice polygon $2P$ and its Ehrhart polynomial $\ehr{2P}{t}$, and the linear coefficient of $\ehr{P,odd}{t}$ is $\frac{b(P)}{2}$ as in the case of a lattice polygon, which was already seen in \cite[Lemma 3.3]{Her10}. We give the complete description and argumentation for all coefficients in the following proposition.

\begin{prop}
Let $P\subseteq \R^2$ be a half-integral polygon. Then $\ehr{P}{t}$ is given by
\begin{align*}
\ehr{P,odd}{t}=&\area{P}t^2+\frac{b(P)}{2}t+l(P)-\area{P}-\frac{b(P)}{2}\\
\ehr{P,even}{t}=&\area{P}t^2+\frac{b(2P)}{4}t+1.
\end{align*}
\end{prop}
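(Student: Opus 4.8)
The plan is to treat the two polynomials separately, using in each case only a pair of evaluations together with the structural information already available. First I would record that by Theorem \ref{Ehrhart_theorem} the quasi-period of $P$ divides $\denom{P}=2$, so that both $\ehr{P,even}{t}$ and $\ehr{P,odd}{t}$ are genuine quadratic polynomials, and that in each the leading coefficient equals the constant function $e_2(t)=\area{P}$. Hence only the linear and constant coefficients remain to be determined, and I would introduce the ansatz $\ehr{P,even}{t}=\area{P}t^2+c_1t+c_0$ and $\ehr{P,odd}{t}=\area{P}t^2+a_1t+a_0$.

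For the even part, the key observation is that for a positive integer $m$ one has $2mP=m(2P)$, so that $\ehr{P}{2m}=l(2mP)=l(m(2P))=\ehr{2P}{m}$. Since $P$ is half-integral, $2P$ is a lattice polygon, and the Ehrhart polynomial of a lattice polygon computed earlier in the excerpt gives $\ehr{2P}{m}=\area{2P}\,m^2+\frac{b(2P)}{2}m+1$. Using $\area{2P}=4\area{P}$ and substituting $t=2m$ into the ansatz, I would compare the two quadratics in $m$ and read off $c_1=\frac{b(2P)}{4}$ and $c_0=1$.

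For the odd part, I would use the two evaluations at $t=1$ and $t=-1$, both of which are odd and hence governed by $\ehr{P,odd}{t}$; the second is accessed through Ehrhart–Macdonald reciprocity (Theorem \ref{Ehrhart-Macdonald}), which for $n=2$ reads $i(kP)=\ehr{P}{-k}$. The two identities $\ehr{P,odd}{1}=l(P)$ and $\ehr{P,odd}{-1}=i(P)$ then give a linear system in $a_1,a_0$ whose difference yields $2a_1=l(P)-i(P)=b(P)$, so that $a_1=\frac{b(P)}{2}$, while back-substitution gives the stated constant term $a_0=l(P)-\area{P}-\frac{b(P)}{2}$.

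The only point requiring care — and the closest thing to an obstacle — is the bookkeeping of parity under reciprocity: one must note that the coefficient functions of the Ehrhart quasi-polynomial have period dividing $2$, hence depend only on the residue of the argument modulo $2$, so that $-1\equiv 1\pmod 2$ guarantees that $\ehr{P}{-1}$ is computed from $\ehr{P,odd}{t}$ rather than from the even polynomial. Once this is observed, everything else reduces to short coefficient comparisons.
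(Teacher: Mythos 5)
Your proposal is correct and follows essentially the same route as the paper: the odd part is determined by evaluating at $t=1$ and, via Ehrhart--Macdonald reciprocity, at $t=-1$ (with the same parity observation that $-1\equiv 1 \pmod 2$ forces the odd polynomial there), and the even part is obtained from the identity $\ehr{P,even}{2m}=\ehr{2P}{m}$ together with the known Ehrhart polynomial of the lattice polygon $2P$. No gaps; the argument matches the paper's proof in both structure and detail.
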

\begin{proof}
From \ref{Ehrhart_theorem} we know, that there exist rational numbers $e_0$ and $e_1$ with
\begin{align*}
\ehr{P,odd}{t}=&\area{P}t^2+e_1t+e_0.
\end{align*}
Using the Ehrhart-Macdonald reciprocity \ref{Ehrhart-Macdonald}, we have $\ehr{P}{-1}=i(P)$ and so we get
\begin{align*}
e_1=\frac{\ehr{P,odd}{1}-\ehr{P,odd}{-1}}{2}=\frac{\ehr{P}{1}-\ehr{P}{-1}}{2}=\frac{l(P)-i(P)}{2}=\frac{b(P)}{2}.
\end{align*}
Now it follows from $\ehr{P}{1}=\ehr{P,odd}{1}=l(P)$ that
\begin{align*}
e_0=\ehr{P,odd}{1}-\area{P}-e_1=l(P)-\area{P}-\frac{b(P)}{2}.
\end{align*}
We have for all $k\in \Z_{\geq 0}$ that $\ehr{P,even}{2k}=l(2kP)=\ehr{2P}{k}$ and so we get $\ehr{P,even}{2t}=\ehr{2P}{t}$. We know the Ehrhart polynomial of the lattice polygon $2P$ from the introduction, and with the quadratic scaling of the area we have
\begin{align*}
\ehr{P,even}{t}=\ehr{2P}{\frac{t}{2}}=\area{2P}\frac{t^2}{4}+\frac{b(2P)}{2}\frac{t}{2}+1=\area{P}t^2+\frac{b(2P)}{4}t+1.
\end{align*}
\end{proof}

Knowing the Ehrhart quasi-polynomial of a half-integral polygon, we can now directly characterize the period collapse of its coefficient functions.

\begin{coro}\label{pseudo_crit}
Let $P\subseteq \R^2$ be a half-integral polygon. Then we have the period sequence $(1,\ast,1)$ for $P$ if and only if $l(P)-\area{P}-\frac{b(P)}{2}=1$, i.e., $P$ obeys Pick's formula, and $(\ast,1,1)$ if and only if for every edge $e$ of $P$ we have $e \cap \Z^2 \neq \emptyset$. In particular, $P$ is pseudo-integral if and only if it obeys Pick's formula and has a lattice point on every edge.
\end{coro}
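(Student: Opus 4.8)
The plan is to read the three coefficient functions $e_0,e_1,e_2$ of $\ehr{P}{t}$ directly off the preceding proposition and to analyze the period of each one separately. Since the quasi-period of a half-integral polygon divides $\denom{P}=2$, every coefficient function has period $1$ or $2$; hence $e_k$ has period $1$ exactly when its value on odd integers equals its value on even integers. The proposition gives $e_2(t)=\area{P}$ identically, so $e_2$ always has period $1$. This is why the third entry of every admissible period sequence is $1$, and why the two stated equivalences only constrain $e_0$ and $e_1$. For $e_0$ the odd value is $l(P)-\area{P}-\frac{b(P)}{2}$ and the even value is $1$, so $e_0$ has period $1$ iff $l(P)-\area{P}-\frac{b(P)}{2}=1$, which is Pick's identity; combined with the automatic period $1$ of $e_2$ this yields the first equivalence, that the period sequence is $(1,\ast,1)$ iff $P$ obeys Pick's formula.

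For $e_1$ the odd value is $\frac{b(P)}{2}$ and the even value is $\frac{b(2P)}{4}$, so $e_1$ has period $1$ iff $b(2P)=2b(P)$. The core of the argument is therefore the geometric lemma that $b(2P)=2b(P)$ holds iff every edge of $P$ contains a lattice point. I would prove this edge by edge. For an edge $e=[v_1,v_2]$ of $P$, the corresponding edge of the lattice polygon $2P$ has lattice length $\ell(2e):=\gcd\bigl(2(v_2-v_1)\bigr)$ (the gcd of the coordinate differences). The residues modulo $2\Z^2$ of the lattice points of $2P$ along this edge repeat with period two, since consecutive ones differ by the primitive direction vector and hence by a nonzero residue; the points lying in $2\Z^2$ are precisely the doubles of the lattice points of $P$ on $e$, so they occur at every other lattice point of $2P$ along $e$.

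Classifying each edge by the residues of its endpoints $2v_1,2v_2$ modulo $2\Z^2$ gives, in every case where $e$ carries at least one lattice point, the relation $\ell(2e)=2N(e)-c(e)$, where $N(e)$ is the number of lattice points of $P$ on the closed edge and $c(e)\in\{0,1,2\}$ is the number of endpoints of $e$ that are lattice points. Summing over all edges and writing $V(P)$ for the number of lattice vertices of $P$, one has $\sum_e N(e)=b(P)+V(P)$ while the correction terms satisfy $\sum_e c(e)=2V(P)$, so the $V(P)$ contributions cancel and one obtains the clean identity
\[
b(2P)=2\,b(P)+\sum_{e\colon\, e\cap\Z^2=\emptyset}\ell(2e),
\]
where the remaining sum runs over the edges with no lattice point (for which $\ell(2e)\geq 1$). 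This gives $b(2P)\geq 2b(P)$ with equality iff there are no edges without lattice points, which is the second equivalence: the period sequence is $(\ast,1,1)$ iff every edge of $P$ meets $\Z^2$.

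The main obstacle is exactly the bookkeeping behind this identity: one must treat separately the edges with zero, one, or two lattice endpoints, and among those with no lattice endpoint distinguish the ones that still carry an interior lattice point from the genuinely empty ones, then verify that the endpoint corrections cancel correctly when summed around the polygon. (A quick sanity check on $P=\conv{(0,0),(\tfrac12,0),(0,\tfrac12)}$, where $b(P)=1$, $b(2P)=3$, and exactly one edge is lattice-point-free, confirms the formula.) Once the identity is in place, the final claim is immediate: $P$ is pseudo-integral iff its quasi-period is $1$, i.e.\ iff its period sequence is $(1,1,1)$, which by the two equivalences holds iff $P$ obeys Pick's formula and has a lattice point on every edge.
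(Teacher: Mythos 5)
Your proposal is correct and follows essentially the same route as the paper: read the coefficient functions off the preceding proposition, observe that $e_2$ is constant and that $e_0$ has period $1$ iff Pick's formula holds, and reduce the $e_1$ statement to the combinatorial equivalence $b(2P)=2b(P)\iff$ every edge meets $\Z^2$. The only difference is that you prove this last equivalence via a careful edge-by-edge count yielding the exact identity $b(2P)=2b(P)+\sum_{e\cap\Z^2=\emptyset}\ell(2e)$, whereas the paper disposes of it in one sentence by noting that a lattice point on every edge is equivalent to having exactly one half-integral point between consecutive boundary lattice points; your version is a valid (and slightly more quantitative) elaboration of the same idea.
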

\begin{proof}
The only thing to show is that $\frac{b(P)}{2}=\frac{b(2P)}{4}$ if and only if every edge of $P$ contains a lattice point. But we have a lattice point on every edge if and only if there is always exactly one half-integral boundary point between two neighboring boundary lattice points, and this is equivalent to $2b(P)=b(2P)$, since $b(2P)$ counts the half-integral boundary points of $P$.
\end{proof}

Pseudo-integral polygons are also generally characterized using Pick's formula and boundary lattice points of dilates, as the following theorem shows.

\begin{thm}[\protect{\cite[3.1]{MAW05}}]\label{pseudo_crit_general}
Let $P\subseteq \R^2$ be a rational polygon. Then the following conditions are equivalent.
\begin{itemize}
	\item $P$ is pseudo-integral.
	\item $\ehr{P}{t}=\area{P}t^2+\frac{b(P)}{2}t+1$.
	\item $kP$ obeys Pick's theorem and $b(kP)=kb(P)$ for all $k\in \Z_{\geq 1}, k\leq \denom{P}$.
\end{itemize}
\end{thm}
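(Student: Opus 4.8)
The plan is to prove the two equivalences $(1)\Leftrightarrow(2)$ and $(2)\Leftrightarrow(3)$ separately, reusing the reciprocity computation already carried out in the Proposition above. The implication $(2)\Rightarrow(1)$ is immediate: the displayed right-hand side is an honest polynomial, so the Ehrhart quasi-polynomial has quasi-period $1$ and $P$ is pseudo-integral by definition. The real content sits in $(1)\Rightarrow(2)$, where the two lower coefficients must be pinned down, and in $(3)\Rightarrow(2)$, where a finite amount of information on the dilates $kP$ must be upgraded to a statement about the whole quasi-polynomial. This general theorem specializes to the criterion in \ref{pseudo_crit} when $\denom{P}=2$.

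For $(1)\Rightarrow(2)$, write $\ehr{P}{t}=\area{P}t^2+e_1t+e_0$ as a genuine polynomial. Exactly as in the Proposition, Ehrhart--Macdonald reciprocity \ref{Ehrhart-Macdonald} gives $e_1=\tfrac12(\ehr{P}{1}-\ehr{P}{-1})=\tfrac12(l(P)-i(P))=\tfrac{b(P)}2$. To obtain $e_0=1$ I would compare $P$ with the lattice polygon $dP$, where $d=\denom{P}$: since $m\cdot(dP)=(md)P$ gives $\ehr{dP}{m}=l(m\cdot dP)=\ehr{P}{md}$ for all $m\in\Z_{\geq1}$, the two honest polynomials $t\mapsto\ehr{P}{dt}$ and $\ehr{dP}{t}$ agree at infinitely many integers and are therefore equal. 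Evaluating at $t=0$ and using that the Ehrhart polynomial of the lattice polygon $dP$ has constant term $1$ (computed in the introduction from Pick's theorem \ref{Pick}) yields $e_0=\ehr{P}{0}=\ehr{dP}{0}=1$. The converse $(2)\Rightarrow(3)$ is a direct calculation: from the stated form and reciprocity we get $i(kP)=\ehr{P}{-k}=\area{P}k^2-\tfrac{b(P)}2k+1$, hence $b(kP)=l(kP)-i(kP)=k\,b(P)$, and substituting back gives $i(kP)+\tfrac{b(kP)}2-1=\area{P}k^2=\area{kP}$, so $kP$ obeys Pick's formula, for every $k$.

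The main obstacle is $(3)\Rightarrow(2)$, because here $kP$ need not be a lattice polygon, so ``$kP$ obeys Pick's theorem'' is a genuine hypothesis rather than an instance of \ref{Pick}. The plan is to use the two hypotheses for each $1\leq k\leq d$ to compute the value of the quasi-polynomial at $k$ and, via reciprocity $i(kP)=\ehr{P}{-k}$, at $-k$. Indeed, $b(kP)=k\,b(P)$ together with the Pick equation $\area{kP}=i(kP)+\tfrac{b(kP)}2-1$ and $\area{kP}=k^2\area{P}$ forces $\ehr{P}{-k}=\area{P}k^2-\tfrac{b(P)}2k+1$ and then $\ehr{P}{k}=\area{P}k^2+\tfrac{b(P)}2k+1$; in other words $\ehr{P}{m}=q(m)$ for $q(t):=\area{P}t^2+\tfrac{b(P)}2t+1$ and all $m\in\{\pm1,\dots,\pm d\}$. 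By \ref{Ehrhart_theorem} the quasi-polynomial is, on each residue class modulo $d$, a quadratic with leading coefficient $\area{P}$; since each residue class contains exactly two of the integers $\pm1,\dots,\pm d$ at which the quasi-polynomial already agrees with $q$ (the class of $j\in\{1,\dots,d-1\}$ contains $j$ and $j-d$, while the class of $0$ contains $\pm d$), and two quadratics with equal leading coefficient that agree at two distinct points coincide, every residue-class polynomial equals $q$. Hence the quasi-polynomial collapses to the single polynomial $q$, which is $(2)$. The delicate point is precisely this bookkeeping of residue classes: one must check that the range $|m|\leq d$ really supplies two interpolation nodes in each class, which is what makes the finite test on $k\leq\denom{P}$ sufficient.
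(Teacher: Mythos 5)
Your proof is correct. Note first that the paper does not prove this statement at all: it is quoted verbatim from McAllister--Woods \cite[3.1]{MAW05}, so there is no in-paper argument to compare against. Your blind reconstruction is a valid, self-contained derivation from the ingredients the paper does provide: the implication $(2)\Rightarrow(1)$ is indeed immediate; your computation of $e_1=\tfrac{b(P)}{2}$ via reciprocity mirrors the paper's Proposition 2.1; and your trick of identifying the two polynomials $t\mapsto\ehr{P}{dt}$ and $\ehr{dP}{t}$ to extract $e_0=\ehr{dP}{0}=1$ is a clean way to avoid invoking the general fact that Ehrhart quasi-polynomials have constant term $1$ at $0$. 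The decisive direction $(3)\Rightarrow(2)$ is handled correctly: the hypotheses for $1\leq k\leq d$ together with reciprocity pin down $\ehr{P}{m}=q(m)$ at the $2d$ integers $m\in\{\pm1,\dots,\pm d\}$, each residue class modulo $d$ receives exactly two of these nodes (the class of $j$ gets $j$ and $j-d=-(d-j)$, the class of $0$ gets $\pm d$), and since Theorem \ref{Ehrhart_theorem} forces each residue-class polynomial to be quadratic with leading coefficient $\area{P}$, two interpolation nodes suffice to identify it with $q$. This residue-class interpolation is essentially the same mechanism used in the original McAllister--Woods proof, so your argument is a faithful reconstruction rather than a genuinely new route; its value here is that it makes the paper's reliance on \cite{MAW05} self-contained using only \ref{Ehrhart_theorem}, \ref{Ehrhart-Macdonald}, and the lattice-polygon computation from the introduction.
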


Therefore, we can compute the number of interior and boundary lattice points of $\denom{P}\cdot P$ for any pseudo-integral polygon using the following corollary.

\begin{coro}\label{int_bound_points}
Let $P\subseteq \R^2$ be a pseudo-integral polygon with $d:=\denom{P}$. Then $dP$ is a lattice polygon with $i(dP)=d^2\cdot i(P)+\frac{d^2-d}{2}\cdot b(P)-d^2+1$ and $b(dP)=d\cdot b(P)$.
\end{coro}
\begin{proof}
From theorem \ref{pseudo_crit_general} we have $b(dP)=d\cdot b(P)$ and can calculate $i(dP)$ with the Ehrhart-Macdonald reciprocity \ref{Ehrhart-Macdonald} as
\begin{align*}
i(dP)=&\ehr{P}{-d}=\area{P}\cdot d^2-\frac{b(P)}{2}\cdot d+1\\=&d^2\left(i(P)+\frac{b(P)}{2}-1\right)-\frac{b(P)}{2}\cdot d+1\\
=&d^2\cdot i(P)+\frac{d^2-d}{2}\cdot b(P)-d^2+1.
\end{align*}
\end{proof}

Specializing to $\denom{P}=2$ we get the following way to classify half-integral pseudo-integral polygons.

\begin{coro}\label{detecting_half_integral_pseudo_integral}
Let $P\subseteq \R^2$ be a half-integral pseudo-integral polygon with $n$ vertices. Then $P$ is affine unimodular equivalent to a polygon with vertices
\begin{align*}
&\left\{\frac{1}{2}v_i\right\}_{1\leq i\leq n}, \left\{\frac{1}{2}v_i-\left(0,\frac{1}{2}\right)\right\}_{1\leq i\leq n}, \left\{\frac{1}{2}v_i-\left(\frac{1}{2},0\right)\right\}_{1\leq i\leq n} \mbox{ or }\\ &\left\{\frac{1}{2}v_i-\left(\frac{1}{2},\frac{1}{2}\right)\right\}_{1\leq i\leq n},
\end{align*} where $\{v_i\}_{1\leq i\leq n}$ are the vertices of a lattice polygon $Q$ with $i(Q)=4i(P)+b(P)-3$ and $b(Q)=2b(P)$.
\end{coro}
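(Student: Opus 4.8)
The plan is to use the lattice polygon $Q := 2P$ together with Corollary \ref{int_bound_points}. Since $P$ is half-integral we have $\denom{P}=2$, so $2P$ is a lattice polygon, and if $w_1,\dots,w_n$ are the vertices of $P$ then $Q$ has vertices $v_i := 2w_i \in \Z^2$. Applying Corollary \ref{int_bound_points} with $d=2$ immediately yields $i(Q) = 4i(P)+b(P)-3$ and $b(Q) = 2b(P)$, the two invariants demanded in the statement. Since the vertices of $P$ are exactly $\frac12 v_i$, the polygon $P$ is already literally of the first of the four listed forms for this particular choice $Q = 2P$, so the existence claim is essentially free.

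The reason to state all four forms — and to phrase the result up to affine unimodular equivalence — is that for the classification one wants $Q$ to range over representatives of equivalence classes of lattice polygons (e.g.\ those tabulated in \cite{KKN10}, \cite{Bae25}, \cite{BS24b}), rather than over the literal polygon $2P$. So the substantive step is to track how $P$ behaves when $2P$ is replaced by an equivalent representative $Q$. Concretely, I would suppose $T_{A,b}(2P)=Q$ with $A\in\mathbf{GL}(2,\Z)$ and $b\in\Z^2$, solve to get $P = \frac12 A^{-1}Q - \frac12 A^{-1}b$, and then apply the linear unimodular map $x\mapsto Ax$ to $P$. This produces a polygon affine unimodular equivalent to $P$ whose vertices are $\frac12 v_i - \frac12 b$, where the $v_i$ are now the vertices of the chosen representative $Q$.

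It then remains to reduce the translation vector modulo $2$. Writing $b = b_0 + 2d$ with $b_0\in\{0,1\}^2$ and $d\in\Z^2$, the even contribution $-d$ is an integer translation, hence an admissible affine unimodular transformation that can be absorbed without changing the equivalence class. This leaves only the four residues $b_0\in\{(0,0),(0,1),(1,0),(1,1)\}$, which correspond exactly to the four displayed vertex families $\frac12 v_i$, $\frac12 v_i-(0,\frac12)$, $\frac12 v_i-(\frac12,0)$, and $\frac12 v_i-(\frac12,\frac12)$. As $Q$ is affine unimodular equivalent to $2P$, its invariants coincide with those computed above, which closes the argument.

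There is no real obstacle here beyond careful bookkeeping: the only point requiring attention is verifying in the second step that conjugating the half-scaling $P=\frac12 Q$ by $T_{A,b}$ only ever yields a half-integral translate of $\frac12 Q$, with the integer part of the translation free and its parity giving precisely the fourfold ambiguity. This is elementary arithmetic in $\mathbf{GL}(2,\Z)$ and $\Z^2$, and apart from Corollary \ref{int_bound_points} no geometric input or inequality is needed.
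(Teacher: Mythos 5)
Your proof is correct and takes the route the paper intends: the paper states this corollary without any written proof, treating it as an immediate specialization of Corollary \ref{int_bound_points} to $d=2$, and your mod-$2$ bookkeeping of the translation vector (absorbing the even part as an integral translation and leaving the four residues $b_0\in\{0,1\}^2$) supplies exactly the omitted justification for why the four listed vertex families suffice when $Q$ ranges over normal-form representatives rather than the literal polygon $2P$.
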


There are several exhaustive classifications of lattice polygons, according to the number of lattice points (up to $l(P)=42$ in \cite{Koe91}), the number of interior points (up to $i(P)=30$ in \cite{Cas12}), and the area (up to $\area{a(P)}=25$ in \cite{Bal21}). Koelman's list was recently extended in \cite{BS24b}.

From all these lists we can create sublists of lattice polygons with a fixed number of interior and boundary lattice points. To classify up to affine unimodular equivalence all half-integral pseudo-integral polygons $P\subseteq \R^2$ with a given number of interior and boundary lattice points, we can test Pick's formula and $2b(P)=b(2P)$ for all polygons from the sublists that are suitable according to \ref{detecting_half_integral_pseudo_integral}. We did this using lattice polygons with up to $78$ lattice points and got the following result (for the complete list of the polygons in the result see \cite{Boh24a}).

\begin{classify}\label{classification_small_number_interior_points}
Up to affine unimodular equivalence there are exactly $16688$ pseudo-integral polygons $P$ with denominator $2$ and at most $6$ interior lattice points. Depending on $i(P)$ and $b(P)$ the polygons are distributed as follows. We give not only the number of pseudo-integral polygons with denominator $2$ at $\# (d=2)$, but also the number of lattice polygons at $\# (d=1)$ for comparison.	
\begin{center}
\begin{footnotesize}
\begin{tabular}{c||c||c|c|c|c|c|c|c|c}
$(i,b)$&$(0,3)$&$(1,2)$&$(1,3)$&$(1,4)$&$(1,5)$&$(1,6)$&$(1,7)$&$(1,8)$&$(1,9)$\\\hline
$\# (d=1)$&$1$&$0$&$1$&$3$&$2$&$4$&$2$&$3$&$1$\\\hline
$\# (d=2)$&$1$&$6$&$6$&$4$&$7$&$3$&$2$&$1$&$1$
\end{tabular}
		
\vspace{0.5cm}
		
\begin{tabular}{c||c|c|c|c|c|c|c|c|c|c}
$(i,b)$&$(2,2)$&$(2,3)$&$(2,4)$&$(2,5)$&$(2,6)$&$(2,7)$&$(2,8)$&$(2,9)$&$(2,10)$&$(2,11)$\\\hline
$\# (d=1)$&$0$&$1$&$5$&$5$&$11$&$7$&$9$&$3$&$4$&$0$\\\hline
$\# (d=2)$&$8$&$35$&$59$&$39$&$27$&$27$&$11$&$7$&$5$&$2$
\end{tabular}
		
\vspace{0.5cm}
		
\begin{tabular}{c||c|c|c|c|c|c}
$(i,b)$&$(3,2)$&$(3,3)$&$(3,4)$&$(3,5)$&$(3,6)$&$(3,7)$\\\hline
$\# (d=1)$&$0$&$2$&$8$&$12$&$19$&$17$ \\\hline
$\# (d=2)$&$29$&$103$&$138$&$124$&$122$&$72$ \\\hline\hline
$(i,b)$&$(3,8)$&$(3,9)$&$(3,10)$&$(3,11)$&$(3,12)$&$(3,13)$\\\hline
$\# (d=1)$&$23$&$14$&$14$&$5$&$6$&$0$\\\hline
$\# (d=2)$&$44$&$39$&$15$&$13$&$8$&$4$
\end{tabular}
		
\vspace{0.5cm}
		
\begin{tabular}{c||c|c|c|c|c|c|c}
$(i,b)$&$(4,2)$&$(4,3)$&$(4,4)$&$(4,5)$&$(4,6)$&$(4,7)$&$(4,8)$\\\hline
$\# (d=1)$&$0$&$1$&$10$&$15$&$33$&$29$&$31$ \\\hline
$\# (d=2)$&$29$&$224$&$400$&$366$&$270$&$164$&$148$ \\\hline\hline
$(i,b)$&$(4,9)$&$(4,10)$&$(4,11)$&$(4,12)$&$(4,13)$&$(4,14)$&$(4,15)$\\\hline
$\# (d=1)$&$22$&$27$&$15$&$17$&$5$&$6$&$0$\\\hline
$\# (d=2)$&$86$&$48$&$42$&$14$&$10$&$7$&$3$
\end{tabular}
		
\vspace{0.5cm}
		
\begin{tabular}{c||c|c|c|c|c|c|c|c}
$(i,b)$&$(5,2)$&$(5,3)$&$(5,4)$&$(5,5)$&$(5,6)$&$(5,7)$&$(5,8)$&$(5,9)$\\\hline
$\# (d=1)$&$0$&$2$&$16$&$28$&$52$&$61$&$61$&$46$ \\\hline
$\# (d=2)$&$44$&$420$&$900$&$1035$&$784$&$482$&$271$&$160$ \\\hline\hline
$(i,b)$&$(5,10)$&$(5,11)$&$(5,12)$&$(5,13)$&$(5,14)$&$(5,15)$&$(5,16)$&$(5,17)$\\\hline
$\# (d=1)$&$36$&$25$&$28$&$17$&$18$&$6$&$7$&$0$\\\hline
$\# (d=2)$&$159$&$90$&$55$&$49$&$16$&$12$&$8$&$4$
\end{tabular}
		
\vspace{0.5cm}
		
\begin{tabular}{c||c|c|c|c|c|c|c|c|c}
$(i,b)$&$(6,2)$&$(6,3)$&$(6,4)$&$(6,5)$&$(6,6)$&$(6,7)$&$(6,8)$&$(6,9)$&$(6,10)$\\\hline
$\# (d=1)$&$0$&$3$&$17$&$39$&$84$&$92$&$111$&$87$&$76$ \\\hline
$\# (d=2)$&$80$&$718$&$1868$&$2148$&$1664$&$1111$&$663$&$367$&$217$ \\\hline\hline
$(i,b)$&$(6,11)$&$(6,12)$&$(6,13)$&$(6,14)$&$(6,15)$&$(6,16)$&$(6,17)$&$(6,18)$&$(6,19)$\\\hline
$\# (d=1)$&$49$&$40$&$26$&$34$&$20$&$21$&$7$&$8$&$0$\\\hline
$\# (d=2)$&$150$&$174$&$103$&$63$&$56$&$18$&$13$&$9$&$4$
\end{tabular}
\end{footnotesize}		
\end{center}
\end{classify}
\begin{proof}
The classification algorithm has already been justified above. It only remains to show that every pseudo-integral polygon $P$ with denominator $2$ has $b(P)=3$ if $i(P)=0$ and it has $b(P)\leq 2i(P)+7$ if $i(P)>0$. This is done in \ref{pseudo-integral_without_interior_lattice points} and in \ref{upper_bound_boundary_points}.
\end{proof}

We see that there are examples with $b(P)=2i(P)+7$ and $i(P)>0$, which are not possible for lattice polygons by Scott's inequality \ref{Scott_inequality}. There are also examples with $b(P)=2$, which we will discuss in the next section. We end this section by defining a special lattice polytope for each rational polytope $P$, which will be very helpful in the following.

\begin{defi}
Let $P\subseteq \R^n$ be a rational polytope.\\
Then the \textit{integer hull of $P$} is the lattice polytope $\conv{P\cap \Z^n}$, and we denote it by $\inthull{P}$.
\end{defi}

\begin{prop}\label{int_hull}
Let $P\subseteq \R^2$ be a pseudo-integral polygon that is not a lattice polygon. If $\dim(\inthull{P})=2$, then the following statements hold:
\begin{itemize}
	\item We have $i(P)>i(\inthull{P})$.
	\item We have $b(P)=b(\inthull{P})-(i(P)-i(\inthull{P}))$.
	\item We have $\area{P}=\area{\inthull{P}}+\frac{1}{2}(i(P)-i(\inthull{P}))$.
\end{itemize}
\end{prop}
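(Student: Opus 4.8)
The plan is to first establish that $P$ and $\inthull{P}$ contain exactly the same lattice points, then read off the second and third claims from two Pick-type area formulas, and finally deduce the strict inequality from a strict comparison of areas.

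The starting point is the observation that $\inthull{P}\cap\Z^2 = P\cap \Z^2$. Since $P$ is convex and contains all points of $P\cap \Z^2$, it contains their convex hull, so $\inthull{P}\subseteq P$ and thus $\inthull{P}\cap \Z^2 \subseteq P\cap \Z^2$; conversely every point of $P\cap \Z^2$ is a lattice point lying in $\conv{P\cap \Z^2}=\inthull{P}$. Hence $l(P)=l(\inthull{P})$, i.e. $i(P)+b(P)=i(\inthull{P})+b(\inthull{P})$. Rearranging this identity immediately yields the second claim $b(P)=b(\inthull{P})-(i(P)-i(\inthull{P}))$, without yet knowing the sign of $i(P)-i(\inthull{P})$.

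For the area statement I would combine two instances of Pick's formula. Since $P$ is pseudo-integral, Corollary \ref{pseudo_crit} (equivalently Theorem \ref{pseudo_crit_general} evaluated at $t=1$) gives $\area{P}=i(P)+\tfrac{b(P)}{2}-1$, and since $\inthull{P}$ is a genuine lattice polygon, Theorem \ref{Pick} gives $\area{\inthull{P}}=i(\inthull{P})+\tfrac{b(\inthull{P})}{2}-1$. Subtracting these and substituting $b(P)-b(\inthull{P})=-(i(P)-i(\inthull{P}))$ from the second claim, the boundary terms combine so that $\area{P}-\area{\inthull{P}}=\tfrac12(i(P)-i(\inthull{P}))$, which is the third claim. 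Note that both of these claims are identities that hold regardless of the sign of $i(P)-i(\inthull{P})$.

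Finally, for the strict inequality $i(P)>i(\inthull{P})$ I would argue via areas. Because $P$ is not a lattice polygon, it cannot equal the convex hull of its own lattice points, so the inclusion $\inthull{P}\subseteq P$ is proper; as both sets are full-dimensional convex bodies (using the hypothesis $\dim(\inthull{P})=2$), a proper full-dimensional convex subset has strictly smaller area, so $\area{P}>\area{\inthull{P}}$. By the third claim this forces $i(P)-i(\inthull{P})=2(\area{P}-\area{\inthull{P}})>0$. The only step requiring a little care is this strict area comparison, which I would justify by picking a point of $P\setminus \inthull{P}$, using that $\interior{P}$ is dense in $P$ to find an interior point at positive distance from the closed set $\inthull{P}$, and observing that a small ball around it lies in $P\setminus \inthull{P}$ and hence contributes positive area. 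This is the only genuine obstacle; everything else is bookkeeping with the two Pick formulas.
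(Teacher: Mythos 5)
Your proposal is correct and follows essentially the same route as the paper: proper inclusion $\inthull{P}\subsetneq P$ (from $\denom{P}>1$) gives $\area{\inthull{P}}<\area{P}$, the equality of lattice-point sets gives the boundary/interior bookkeeping, and Pick's formula for $P$ (via pseudo-integrality) and for the lattice polygon $\inthull{P}$ yields the identities. You simply spell out the strict area comparison and the algebra in more detail than the paper does.
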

\begin{proof}
Since $\denom{P}>1$, we have $\inthull{P}\subsetneq P$, and therefore we also get $\area{\inthull{P}}<\area{P}$. By definition, $\inthull{P}$ and $P$ have the same number of lattice points. Since $P$ as a pseudo-integral polygon obeys Pick's formula by \ref{pseudo_crit_general},  we obtain the claimed identities.
\end{proof}

\section{Half-integral pseudo-integral polygons with at most two boundary lattice points or without interior lattice points}

There are no pseudo-integral polygons without boundary lattice points since by \ref{pseudo_crit_general} a pseudo-integral polygon with denominator $d$ has at least
$b(P)=\frac{b(dP)}{d}\geq \ceil{\frac{3}{d}}\geq 1$ boundary lattice point, since the lattice polygon $dP$ has at least $3$ boundary lattice points. This was already seen in \cite[Theorem 1.2]{MAM17}. However, for any given number of interior lattice points $i>1$ there are examples of pseudo-integral polygons with one boundary lattice point and denominator $2i+1$ in \cite[Theorem 1.2]{MAM17}. We can also construct a family of pseudo-integral triangles of denominator $3$ with exactly one boundary and $i$ interior lattice points by
\begin{align*}
T_{3,(i,1)}=\conv{\left(\frac{2}{3},0\right), (1,0), \left(\frac{1}{3}+i,6i-3\right)}.
\end{align*}

In the half-integral case, this cannot happen because we have a lattice point on every edge by \ref{pseudo_crit}.

Nevertheless, we have half-integral pseudo-integral polygons with two boundary lattice points for any given number of interior lattice points greater than $0$. For example, we can get such polygons from the family of polygons in the following proposition. 

\begin{prop}
Let be $i\in \Z_{\geq 1}$. Then
\begin{align*}
T_{2,(i,2)}:=\conv{(0,1), (1,-1), \left(i+\frac{1}{2},0\right)}\subseteq \R^2
\end{align*}
is a half-integral pseudo-integral triangle with $i(T_{2,(i,2)})=i$ and $b(T_{2,(i,2)})=2$.
\end{prop}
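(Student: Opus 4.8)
The plan is to verify the three defining properties of $T_{2,(i,2)}$ directly, using the pseudo-integrality criterion \ref{pseudo_crit}. First I would establish that $T_{2,(i,2)}$ is genuinely a half-integral triangle: its vertices lie in $\frac{1}{2}\Z^2$, and the vertex $\left(i+\frac12,0\right)$ has a genuine half-integer coordinate, so $\denom{T_{2,(i,2)}}=2$ (it is not a lattice polygon). Then the bulk of the work is to count $b(T_{2,(i,2)})$ and $i(T_{2,(i,2)})$ and to check the criterion of \ref{pseudo_crit}, namely that the triangle obeys Pick's formula and has a lattice point on every edge.

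For the boundary count I would examine the three edges in turn. The edge from $(0,1)$ to $(1,-1)$ has direction vector $(1,-2)$, which is primitive, so it contains no interior lattice points and contributes only its endpoints. The edge from $(1,-1)$ to $\left(i+\frac12,0\right)$ and the edge from $\left(i+\frac12,0\right)$ to $(0,1)$ each have one endpoint at the non-lattice vertex; I would check that the segment from $(1,-1)$ to $\left(i+\frac12,0\right)$ passes through the lattice point $(i,-1)$ or an analogous nearby point, and similarly that the third edge contains a lattice point, so that \emph{every} edge meets $\Z^2$ (this is the condition for the period sequence $(\ast,1,1)$). Collecting the lattice points on the boundary, the only genuine boundary lattice points should turn out to be $(0,1)$ and $(1,-1)$, giving $b(T_{2,(i,2)})=2$; the third vertex is not a lattice point and is accompanied by lattice points on its two incident edges, but these must be shown to coincide with already-counted points or to be interior to edges without adding to the total in a way inconsistent with $b=2$.

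Next I would compute the area via the standard shoelace formula on the three vertices, obtaining $\area{T_{2,(i,2)}}=\frac{2i+1}{2}$ or a similar expression linear in $i$, and count the interior lattice points $i(T_{2,(i,2)})$, which I expect to be exactly $i$ — most naturally the points $(1,0),(2,0),\dotsc,(i,0)$ lying along the $x$-axis inside the triangle. With $\area{T_{2,(i,2)}}$, $b=2$, and $i(T_{2,(i,2)})=i$ in hand, I would verify Pick's formula $\area{T_{2,(i,2)}}=i(T_{2,(i,2)})+\frac{b(T_{2,(i,2)})}{2}-1$, i.e.\ that the computed area equals $i+1-1=i$; if the naive shoelace area does not match, the interior count must be adjusted accordingly, and matching these is the crux of the argument. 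Once Pick's formula holds and every edge carries a lattice point, \ref{pseudo_crit} immediately gives that $T_{2,(i,2)}$ is pseudo-integral, completing the proof.

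The main obstacle I anticipate is the bookkeeping around the half-integral vertex $\left(i+\frac12,0\right)$: I must be careful that the lattice points forced onto the two edges meeting this vertex are correctly classified as boundary or interior points and that none of them inflates $b$ beyond $2$, since the whole point of this example is to exhibit $b=2$, which is impossible for lattice polygons. Verifying simultaneously that each such edge still contains at least one lattice point (so the odd-part linear coefficient collapses) while the total boundary lattice point count stays at $2$ is the delicate balance on which the statement rests.
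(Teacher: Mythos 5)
Your proposal is correct and follows essentially the same route as the paper: directly count the interior lattice points (which are exactly $(1,0),\dotsc,(i,0)$), observe $b=2$, compute the area, check Pick's formula, verify a lattice point on every edge, and invoke \ref{pseudo_crit}. Two of your provisional guesses would get corrected when you carry out the computation — the shoelace area is $i$ (not $\frac{2i+1}{2}$), which matches $i+\frac{2}{2}-1$ exactly, and the two edges meeting $\left(i+\frac{1}{2},0\right)$ contain no lattice points other than their lattice endpoints $(1,-1)$ and $(0,1)$ (the point $(i,-1)$ is not on the edge for $i>1$) — but since those endpoints already supply the required lattice point on each edge, your argument closes as planned.
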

\begin{proof}
$T_{2,(i,2)}$ is half-integral, $|\mathrm{int}(T_{2,(i,2)})\cap \Z^2|=|\{(k,0) \mid k\in \Z, 1\leq k\leq i\}|=i$, and
\begin{align*}
b(T_{2,(i,2)})=|\{(0,1), (1,-1)\}|=2.
\end{align*}
Since $\area{T_{2,(i,2)}}=i$ we get $\area{T_{2,(i,2)}}=i(T_{2,(i,2)})+\frac{b(T_{2,(i,2)})}{2}-1$ and we have lattice points on every edge of $T_{2,(i,2)}$, so with \ref{pseudo_crit} we also get that $T_{2,(i,2)}$ is pseudo-integral. 
\end{proof}

\begin{figure}[H]
	\begin{tikzpicture}[x=0.6cm,y=0.6cm]
		\draw[step=2.0,black,thin,xshift=0cm,yshift=0cm] (-5,3) grid (17.9,-3);
		
		\draw [line width=1pt,color=black] (-4,2)-- (-2,-2);
		\draw [line width=1pt,color=black] (-2,-2)-- (-1,0);
		\draw [line width=1pt,color=black] (-1,0)-- (-4,2);
		
		\draw [fill=black] (-4,2) circle (2.5pt);
		\draw [fill=black] (-2,-2) circle (2.5pt);
		\draw [fill=black] (-1,0) circle (2.5pt);
		\draw [fill=black] (-2,0) circle (2.5pt);
		
		\draw[color=black] (-1,1) node {$T_{2,(1,2)}$ };

		\draw [line width=1pt,color=black] (2,2)-- (4,-2);
		\draw [line width=1pt,color=black] (4,-2)-- (7,0);
		\draw [line width=1pt,color=black] (7,0)-- (2,2);
		
		\draw [fill=black] (2,2) circle (2.5pt);
		\draw [fill=black] (4,-2) circle (2.5pt);
		\draw [fill=black] (7,0) circle (2.5pt);
		\draw [fill=black] (4,0) circle (2.5pt);
		\draw [fill=black] (6,0) circle (2.5pt);
		
		\draw[color=black] (7,1) node {$T_{2,(2,2)}$ };

		\draw [line width=1pt,color=black] (10,2)-- (12,-2);
		\draw [line width=1pt,color=black] (12,-2)-- (17,0);
		\draw [line width=1pt,color=black] (17,0)-- (10,2);
		
		\draw [fill=black] (10,2) circle (2.5pt);
		\draw [fill=black] (12,-2) circle (2.5pt);
		\draw [fill=black] (17,0) circle (2.5pt);
		\draw [fill=black] (12,0) circle (2.5pt);
		\draw [fill=black] (14,0) circle (2.5pt);
		\draw [fill=black] (16,0) circle (2.5pt);
		
		\draw[color=black] (17,1) node {$T_{2,(3,2)}$ };
		
	\end{tikzpicture}
	\caption{Triangles, which are affine unimodular equivalent to the triangles $T_{2,(1,2)}$, $T_{2,(2,2)}$ and $T_{2,(3,2)}$. }
\end{figure}
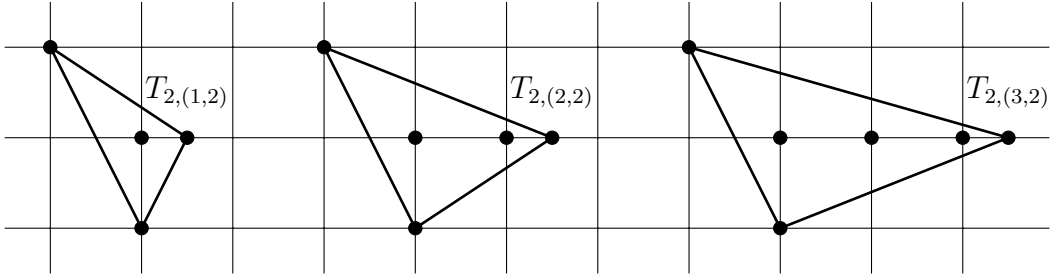

We now turn to polygons without interior lattice points, which is a rich class for lattice polygons, but consists of only one polygon for half-integral pseudo-integral polygons, as we will see.

There is no pseudo-integral polygon $P$ without interior lattice points with less than $3$ boundary lattice points since $P$ must obey Pick's formula by \ref{pseudo_crit} and so we have
\begin{align*}
b(P)=2\cdot \area{P}-2i(P)+2=2\cdot \area{P}+2>2.
\end{align*}
This has already been seen in \cite[1.2]{MAM17}. There are also no non-integral pseudo-integral polygons without interior lattice points and non-collinear boundary lattice points, as the following lemma shows.

\begin{lemma}\label{hollow_non_collinear_boundary_points}
Let $P\subseteq \R^2$ be pseudo-integral polygon without interior lattice points and with non-collinear boundary lattice points. Then $P$ is a lattice polygon.
\end{lemma}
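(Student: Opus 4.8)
The plan is to argue by contradiction using Proposition \ref{int_hull}. Suppose $P$ is a pseudo-integral polygon with $i(P)=0$ and non-collinear boundary lattice points, but that $P$ is \emph{not} a lattice polygon. First I would observe that, since $i(P)=0$, every lattice point of $P$ lies on $\partial P$, so that $P\cap \Z^2=\partial P\cap \Z^2$ and hence $\inthull{P}=\conv{\partial P\cap \Z^2}$. By hypothesis these boundary lattice points are non-collinear, so among them there are three points spanning a triangle of positive area; therefore $\dim(\inthull{P})=2$.

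With $P$ not a lattice polygon and $\dim(\inthull{P})=2$, the hypotheses of Proposition \ref{int_hull} are exactly met. The first bullet of that proposition then gives $i(P)>i(\inthull{P})$. Since $i(\inthull{P})\geq 0$, being a count of lattice points, this forces $i(P)\geq 1$, contradicting the assumption $i(P)=0$. Hence the supposition that $P$ is not a lattice polygon is untenable, and $P$ must be a lattice polygon.

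The only step requiring genuine care is the verification that $\dim(\inthull{P})=2$, which is precisely where the non-collinearity hypothesis enters: without it $\inthull{P}$ could degenerate to a segment (or a point), and Proposition \ref{int_hull} would no longer be applicable. Once two-dimensionality is secured, the strict inequality $i(P)>i(\inthull{P})\geq 0$ does all of the work, so I do not anticipate any serious obstacle beyond this bookkeeping; the substance of the lemma is entirely carried by the area/point-count identities already packaged in Proposition \ref{int_hull}.
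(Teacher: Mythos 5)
Your argument is correct and is essentially the paper's own proof: the paper likewise notes that non-collinearity of the boundary lattice points forces $\dim(\inthull{P})=2$ and then invokes Proposition \ref{int_hull}, whose first assertion $i(P)>i(\inthull{P})\geq 0$ yields the contradiction with $i(P)=0$ exactly as you spell out. You have merely made explicit the bookkeeping (all lattice points of $P$ are boundary points, hence $\inthull{P}$ is two-dimensional) that the paper leaves implicit.
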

\begin{proof}
Since the boundary lattice points are not collinear, $\dim(\inthull{P})=2$. So the lemma follows from \ref{int_hull}.
\end{proof}

It remains to examine polygons with collinear boundary lattice points. For half-integral polygons we get only one such polygon, which is a non-lattice pseudo-integral polygon.

\begin{prop}\label{pseudo-integral_without_interior_lattice points}
Let $P\subseteq \R^2$ be half-integral pseudo-integral polygon with $i(P)=0$. \\
Then $P$ is either a lattice polygon or $P$ is affine unimodular equivalent to the triangle 
\begin{align*}
T_{2,(0,3)}:=\conv{(0,0),(2,0),\left(0,\frac{1}{2}\right)}
\end{align*}
\end{prop}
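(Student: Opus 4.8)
The plan is to split off the case of non-collinear boundary lattice points, which is already handled, and to analyze the collinear case by hand. If $P$ is not a lattice polygon, then by \ref{hollow_non_collinear_boundary_points} its boundary lattice points must be collinear, and since we have already seen that $b(P)>2$, there are at least three of them. After an affine unimodular transformation I would place the line through these points on the $x$-axis. Because $i(P)=0$, all lattice points of $P$ lie on this line, and as the lattice points of a segment they are consecutive, so $P\cap\Z^2=\{(0,0),\dotsc,(m,0)\}$ with $m=b(P)-1\ge 2$. The intermediate points $(1,0),\dotsc,(m-1,0)$ lie on $\partial P$, so convexity forces the whole segment $\conv{(0,0),(m,0)}$ onto an edge of $P$, and hence $P$ lies in a closed half-plane, say $\{y\ge 0\}$.

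Next I would read off the area from Pick's formula, which $P$ satisfies by \ref{pseudo_crit}: this gives $\area{P}=\frac{m-1}{2}$. Writing $h$ for the maximal $y$-coordinate of $P$, attained at some vertex, the triangle with base $\conv{(0,0),(m,0)}$ and apex at that vertex is contained in $P$, so $\frac{m-1}{2}=\area{P}\ge\frac{mh}{2}$, whence $h\le 1-\frac1m<1$. Since $P$ is half-integral, $h$ is a positive multiple of $\frac12$ strictly below $1$, and therefore $h=\frac12$; in particular $P$ lies in the strip $\{0\le y\le \frac12\}$.

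Now half-integrality forces every vertex of $P$ to have $y$-coordinate $0$ or $\frac12$, since no half-integer lies strictly between. The base contributes exactly two vertices at height $0$ (collinear points admit at most two vertices, and here they are an edge). If there were two or more vertices at height $\frac12$, the resulting top edge would lie on $y=\frac12$ and contain no lattice point, contradicting the criterion in \ref{pseudo_crit} that every edge of a half-integral pseudo-integral polygon carries a lattice point; hence there is a single apex $(u,\frac12)$ and $P$ is a triangle. Applying the same criterion to the two slanted edges, each of which can meet $\Z^2$ only at its endpoint on $y=0$, forces both base endpoints to be lattice points, so the base is exactly $\conv{(0,0),(m,0)}$. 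Comparing areas, $\frac{m}{4}=\area{P}=\frac{m-1}{2}$ yields $m=2$.

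Finally $P=\conv{(0,0),(2,0),(u,\frac12)}$ for some $u\in\frac12\Z$, and the integral shear $(x,y)\mapsto(x+ay,y)$ fixing the $x$-axis carries the apex to $(0,\frac12)$ for a suitable $a\in\Z$, identifying $P$ with $T_{2,(0,3)}$. The main obstacle is controlling the upper boundary: everything hinges on first pinning the height to exactly $\frac12$ through the area bound, after which half-integrality together with the edge criterion of \ref{pseudo_crit} rigidly collapses the top to a single vertex and fixes the base, leaving no freedom beyond the final shear.
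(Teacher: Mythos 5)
Your proposal is correct and follows essentially the same route as the paper: both dispose of the non-collinear case via Lemma \ref{hollow_non_collinear_boundary_points}, then show in the collinear case that all lattice points lie on a single edge, that $P$ is a triangle with a half-integral apex, and finally combine Pick's formula with the half-integrality of the apex's lattice distance to force that distance to be $\frac{1}{2}$ and $b(P)=3$. Your write-up merely makes explicit some steps the paper's proof leaves implicit (the height bound via an inscribed triangle, ruling out a top edge at height $\frac{1}{2}$, and pinning down the base endpoints), but the underlying argument is the same.
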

\begin{proof}
If the boundary lattice points are not collinear, we have a lattice polygon by \ref{hollow_non_collinear_boundary_points}. 

If the boundary lattice points are collinear, $P$ is a triangle and $e:=\inthull{P}$ is an edge of $P$, since we must have a lattice point on every edge of $P$ by \ref{pseudo_crit}. If $d$ is the lattice distance of the non-integral vertex from the edge $e$ and $\mathrm{length}(e):=l(e)-1$ the lattice length of $e$, then we have $2d\in \Z_{\geq 1}$ and with Pick's formula
\begin{align*}
\frac{b(P)}{2}-1=\area{P}=\mathrm{length}(e)\cdot \frac{d}{2}=(b(P)-1)\cdot \frac{d}{2}.
\end{align*}
Therefore $2d=\frac{2b(P)-4}{b(P)-1}<2$. We get $d=\frac{1}{2}$ and $b(P)=3$, so $P$ is affine unimodular equivalent to $\conv{(0,0),(2,0),(0,\frac{1}{2})}$.
\end{proof}

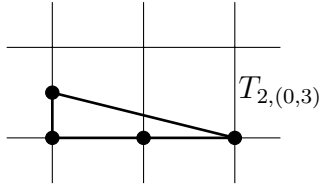
\begin{figure}[H]
	\begin{tikzpicture}[x=0.6cm,y=0.6cm]
		\draw[step=2.0,black,thin,xshift=0cm,yshift=0cm] (-1,3) grid (5,-1);
		
		\draw [line width=1pt,color=black] (0,0)-- (4,0);
		\draw [line width=1pt,color=black] (4,0)-- (0,1);
		\draw [line width=1pt,color=black] (0,1)-- (0,0);
		
		\draw [fill=black] (0,0) circle (2.5pt);
		\draw [fill=black] (4,0) circle (2.5pt);
		\draw [fill=black] (2,0) circle (2.5pt);
		\draw [fill=black] (0,1) circle (2.5pt);
		
		\draw[color=black] (5,1) node {$T_{2,(0,3)}$ };
		
	\end{tikzpicture}
	\caption{Up to affine unimodular equivalence the only pseudo-integral polygon with $\denom{P}=2$ and $i(P)=0$. }
\end{figure}

\begin{rem}\label{hollow_lattice_polygons}
The lattice polygons without interior lattice points are classified (e.g. \cite[4.1.2]{Koe91}). They are either affine unimodular equivalent to the lattice triangle $\conv{(0,0),(2,0),(0,2)}$ or to exactly one of the lattice polygons
\begin{align*}
P_{1,(0,(u,d))}=\conv{(1,0),(d,0),(u,1),(1,1)} \text{ with } u,d\in \Z_{\geq 1}, d>1, d\geq u.
\end{align*}
In particular, since $b(P_{1,(0,(u,d))})=u+d$, there is a lattice polygon without interior lattice points for any given number of boundary lattice points $b\geq 3$.
\end{rem}

\begin{rem}
The situation is quite different for denominators greater than $2$. We cannot generally expect that there is only a finite number of pseudo-integral polygons without interior lattice points for a fixed denominator $d>2$. For example, for $d=3$ and any  $b\in \Z_{\geq 4}$, one can show that there is a pseudo-integral polygon
\begin{align*}
Q_{3,(0,b)}:=\conv{(0,0),\left(\frac{1}{3},0\right),\left(b-\frac{7}{3},2b-5\right), (b-1,2b-2)}
\end{align*} 
without interior lattice points and with
\begin{align*}
b(Q_{3,(0,b)})=|\{(l,2l) \mid l\in\Z, 0\leq l\leq b-1\}|=b.
\end{align*}
In particular, for every $b\in \Z_{\geq 3}$ we have a pseudo-integral polygon with denominator $2$ or $3$, without interior lattice points and with $b$ boundary lattice points.
\end{rem}

\begin{rem}
The triangle in \ref{pseudo-integral_without_interior_lattice points} is affine unimodular equivalent to the triangle $\conv{(0,0),(\frac{1}{2},0),(2,2)}$. It seems to be possible to generalize this example and get for each denominator $d\in\Z_{\geq 2}$ a triangle
\begin{align*}
T_{d,(0,d+1)}:=\conv{(0,0),\left(\frac{1}{d},0\right),(d,(d-1)d)},
\end{align*}
which is pseudo-integral without interior lattice points and with the $d+1$ boundary lattice points of the set $\{(l,(d-1)l) \mid l\in \Z, 0\leq l\leq d\}$.
\end{rem}

\section{Half-integral pseudo-integral polygons with exactly one interior lattice point}

For lattice polygons, up to affine unimodular equivalence, we have exactly $16$ polygons $P$ with $i(P)=1$, the $16$ \textit{reflexive polygons}, independently classified in \cite{Bat85}, \cite{Rab89} and \cite{Koe91}.  Pseudo-integral polygons with denominator $2$ and exactly one interior lattice point have some nice properties in common with reflexive polygons, which we will use for a deeper investigation in the next section. In this section, we give the complete classification of pseudo-integral polygons with denominator $2$ and exactly one interior lattice point without using these properties.

\begin{classify}
There are exactly $30$ pseudo-integral polygons $P\subseteq \R^2$ with denominator $2$ and $i(P)=1$ up to affine unimodular equivalence. Each of them is affine unimodular equivalent to exactly one of the polygons in figure \ref{1_interior_lattice_point}.
\end{classify}
\begin{proof}
Since $\denom{P}=2$, the polygon $P$ has at least $2$ lattice points due to \ref{pseudo_crit}, and so $\inthull{P}$ is at least of dimension $1$.
	
We first consider the case $\dim(\inthull{P})=1$. Since $\denom{P}=2$, by \ref{pseudo_crit} we have a lattice point on every edge of $P$, and so with $i(P)=1$ we get that $P$ is the convex hull of $\inthull{P}$, which must be a segment with $3$ lattice points, and of two half-integral points, one on each side of the affine hull of $\inthull{P}$. So we have $b(P)=2$ and with Pick's formula we get $\area{P}=1$, so that the two half-integral vertices have lattice distance $\frac{1}{2}$ to $\inthull{P}$. After a suitable affine unimodular transformation we can assume that $\inthull{P}=\conv{(0,1),(0,-1)}$ and the half-integral point $(\frac{1}{2},-1)$ is a vertex of $P$. Because of the convexity of $P$, the other half-integral vertex of $P$ must be a point of $\{(-\frac{1}{2},\frac{l}{2}-1) \mid l\in\Z, 0\leq l \leq 8\}$.  With suitable affine unimodular transformations, we can limit ourselves to $l\leq 4$, so we get up to affine unimodular equivalence one of the first five polygons in the first row of figure \ref{1_interior_lattice_point}.
	
Now we turn to the case where $\inthull{P}$ is of dimension $2$. Then we have by \ref{int_hull} $i(\inthull{P})=0$ and $b(\inthull{P})=b(P)+1$, since $i(P)=1$. Moreover, by \ref{int_hull} we get the relation $\area{P}=\area{\inthull{P}}+\frac{1}{2}$. Therefore, we obtain $P$ as the convex hull of $\inthull{P}$ and a single half-integral point not in $\inthull{P}$, and the closure in $\R^2$ of $P\setminus \inthull{P}$ is either a triangle affine unimodular equivalent to the triangle $\conv{(0,0), (2,0), (0,\frac{1}{2})}$, or the union of two triangles sharing an edge, while each of them is affine unimodular equivalent to $\conv{(0,0), (1,0), (0,\frac{1}{2})}$. In particular, $\inthull{P}$ has one edge with exactly $3$ lattice points or two adjacent edges with exactly $2$ lattice points.

\begin{figure}[H]
	\begin{tikzpicture}[x=0.42cm,y=0.42cm]
		\draw[step=2.0,black,thin,xshift=0cm,yshift=0cm] (-1.9,3.7) grid (23,-45.5);

		\draw[color=black] (0,2.8) node[fill=white] {$b(P)=2$ };
		
		\draw [line width=1pt,color=black] (-1,2)-- (0,-2);
		\draw [line width=1pt,color=black] (0,-2)-- (1,-2);
		\draw [line width=1pt,color=black] (1,-2)-- (0,2);
		\draw [line width=1pt,color=black] (0,2)-- (-1,2);
		
		\draw [fill=black] (0,0) circle (2.5pt);
		\draw [fill=black] (0,2) circle (2.5pt);
		\draw [fill=black] (0,-2) circle (2.5pt);
		\draw [fill=black] (1,-2) circle (2.5pt);
		\draw [fill=black] (-1,2) circle (2.5pt);

		\draw [line width=1pt,color=black] (3,1)-- (4,-2);
		\draw [line width=1pt,color=black] (4,-2)-- (5,-2);
		\draw [line width=1pt,color=black] (5,-2)-- (4,2);
		\draw [line width=1pt,color=black] (4,2)-- (3,1);
		
		\draw [fill=black] (4,0) circle (2.5pt);
		\draw [fill=black] (4,2) circle (2.5pt);
		\draw [fill=black] (4,-2) circle (2.5pt);
		\draw [fill=black] (5,-2) circle (2.5pt);
		\draw [fill=black] (3,1) circle (2.5pt);

		\draw [line width=1pt,color=black] (7,0)-- (8,-2);
		\draw [line width=1pt,color=black] (8,-2)-- (9,-2);
		\draw [line width=1pt,color=black] (9,-2)-- (8,2);
		\draw [line width=1pt,color=black] (8,2)-- (7,0);
		
		\draw [fill=black] (8,0) circle (2.5pt);
		\draw [fill=black] (8,2) circle (2.5pt);
		\draw [fill=black] (8,-2) circle (2.5pt);
		\draw [fill=black] (9,-2) circle (2.5pt);
		\draw [fill=black] (7,0) circle (2.5pt);

		\draw [line width=1pt,color=black] (11,-1)-- (12,-2);
		\draw [line width=1pt,color=black] (12,-2)-- (13,-2);
		\draw [line width=1pt,color=black] (13,-2)-- (12,2);
		\draw [line width=1pt,color=black] (12,2)-- (11,-1);
		
		\draw [fill=black] (12,0) circle (2.5pt);
		\draw [fill=black] (12,2) circle (2.5pt);
		\draw [fill=black] (12,-2) circle (2.5pt);
		\draw [fill=black] (13,-2) circle (2.5pt);
		\draw [fill=black] (11,-1) circle (2.5pt);

		\draw [line width=1pt,color=black] (15,-2)-- (17,-2);
		\draw [line width=1pt,color=black] (17,-2)-- (16,2);
		\draw [line width=1pt,color=black] (16,2)-- (15,-2);
		
		\draw [fill=black] (16,0) circle (2.5pt);
		\draw [fill=black] (16,2) circle (2.5pt);
		\draw [fill=black] (16,-2) circle (2.5pt);
		\draw [fill=black] (17,-2) circle (2.5pt);
		\draw [fill=black] (15,-2) circle (2.5pt);

		\draw [line width=1pt,color=black] (19,1)-- (20,-2);
		\draw [line width=1pt,color=black] (20,-2)-- (22,0);
		\draw [line width=1pt,color=black] (22,0)-- (19,1);		
		\draw [fill=black] (20,-2) circle (2.5pt);
		\draw [fill=black] (22,0) circle (2.5pt);
		\draw [fill=black] (20,0) circle (2.5pt);
		\draw [fill=black] (19,1) circle (2.5pt);

		\draw[color=black] (0,-3.2) node[fill=white] {$b(P)=3$ };
		
		\draw [line width=1pt,color=black] (-1,-4)-- (0,-8);
		\draw [line width=1pt,color=black] (0,-8)-- (2,-4);
		\draw [line width=1pt,color=black] (2,-4)-- (0,-4);
		\draw [line width=1pt,color=black] (0,-4)-- (-1,-4);
		
		\draw [fill=black] (0,-4) circle (2.5pt);
		\draw [fill=black] (0,-6) circle (2.5pt);
		\draw [fill=black] (0,-8) circle (2.5pt);
		\draw [fill=black] (2,-4) circle (2.5pt);
		\draw [fill=black] (-1,-4) circle (2.5pt);

		\draw [line width=1pt,color=black] (3,-5)-- (4,-8);
		\draw [line width=1pt,color=black] (4,-8)-- (6,-4);
		\draw [line width=1pt,color=black] (6,-4)-- (4,-4);
		\draw [line width=1pt,color=black] (4,-4)-- (3,-5);
		
		\draw [fill=black] (4,-4) circle (2.5pt);
		\draw [fill=black] (4,-6) circle (2.5pt);
		\draw [fill=black] (4,-8) circle (2.5pt);
		\draw [fill=black] (6,-4) circle (2.5pt);
		\draw [fill=black] (3,-5) circle (2.5pt);

		\draw [line width=1pt,color=black] (7,-6)-- (8,-8);
		\draw [line width=1pt,color=black] (8,-8)-- (10,-4);
		\draw [line width=1pt,color=black] (10,-4)-- (8,-4);
		\draw [line width=1pt,color=black] (8,-4)-- (7,-6);
		
		\draw [fill=black] (8,-4) circle (2.5pt);
		\draw [fill=black] (8,-6) circle (2.5pt);
		\draw [fill=black] (8,-8) circle (2.5pt);
		\draw [fill=black] (10,-4) circle (2.5pt);
		\draw [fill=black] (7,-6) circle (2.5pt);

		\draw [line width=1pt,color=black] (11,-7)-- (12,-8);
		\draw [line width=1pt,color=black] (12,-8)-- (14,-4);
		\draw [line width=1pt,color=black] (14,-4)-- (12,-4);
		\draw [line width=1pt,color=black] (12,-4)-- (11,-7);
		
		\draw [fill=black] (12,-4) circle (2.5pt);
		\draw [fill=black] (12,-6) circle (2.5pt);
		\draw [fill=black] (12,-8) circle (2.5pt);
		\draw [fill=black] (14,-4) circle (2.5pt);
		\draw [fill=black] (11,-7) circle (2.5pt);

		\draw [line width=1pt,color=black] (15,-5)-- (18,-10);
		\draw [line width=1pt,color=black] (18,-10)-- (18,-6);
		\draw [line width=1pt,color=black] (18,-6)-- (15,-5);
		
		\draw [fill=black] (16,-6) circle (2.5pt);
		\draw [fill=black] (18,-6) circle (2.5pt);
		\draw [fill=black] (18,-8) circle (2.5pt);
		\draw [fill=black] (18,-10) circle (2.5pt);
		\draw [fill=black] (15,-5) circle (2.5pt);

		\draw [line width=1pt,color=black] (19,-5)-- (20,-8);
		\draw [line width=1pt,color=black] (20,-8)-- (22,-8);
		\draw [line width=1pt,color=black] (22,-8)-- (22,-6);
		\draw [line width=1pt,color=black] (22,-6)-- (19,-5);
		
		\draw [fill=black] (20,-6) circle (2.5pt);
		\draw [fill=black] (20,-8) circle (2.5pt);
		\draw [fill=black] (22,-6) circle (2.5pt);
		\draw [fill=black] (22,-8) circle (2.5pt);
		\draw [fill=black] (19,-5) circle (2.5pt);

		\draw[color=black] (0,-9.2) node[fill=white] {$b(P)=4$ };
		
		\draw [line width=1pt,color=black] (0,-10)-- (0,-13);
		\draw [line width=1pt,color=black] (0,-13)-- (4,-12);
		\draw [line width=1pt,color=black] (4,-12)-- (2,-10);
		\draw [line width=1pt,color=black] (2,-10)-- (0,-10);
		
		\draw [fill=black] (0,-10) circle (2.5pt);
		\draw [fill=black] (0,-12) circle (2.5pt);
		\draw [fill=black] (2,-10) circle (2.5pt);
		\draw [fill=black] (2,-12) circle (2.5pt);
		\draw [fill=black] (4,-12) circle (2.5pt);
		\draw [fill=black] (0,-13) circle (2.5pt);

		\draw [line width=1pt,color=black] (6,-10)-- (6,-12);
		\draw [line width=1pt,color=black] (6,-12)-- (7,-13);
		\draw [line width=1pt,color=black] (7,-13)-- (10,-12);
		\draw [line width=1pt,color=black] (10,-12)-- (8,-10);
		\draw [line width=1pt,color=black] (8,-10)-- (6,-10);
		
		\draw [fill=black] (6,-10) circle (2.5pt);
		\draw [fill=black] (6,-12) circle (2.5pt);
		\draw [fill=black] (8,-10) circle (2.5pt);
		\draw [fill=black] (8,-12) circle (2.5pt);
		\draw [fill=black] (10,-12) circle (2.5pt);
		\draw [fill=black] (7,-13) circle (2.5pt);

		\draw [line width=1pt,color=black] (12,-10)-- (12,-12);
		\draw [line width=1pt,color=black] (12,-12)-- (14,-13);
		\draw [line width=1pt,color=black] (14,-13)-- (16,-12);
		\draw [line width=1pt,color=black] (16,-12)-- (14,-10);
		\draw [line width=1pt,color=black] (14,-10)-- (12,-10);
		
		\draw [fill=black] (12,-10) circle (2.5pt);
		\draw [fill=black] (12,-12) circle (2.5pt);
		\draw [fill=black] (14,-10) circle (2.5pt);
		\draw [fill=black] (14,-12) circle (2.5pt);
		\draw [fill=black] (16,-12) circle (2.5pt);
		\draw [fill=black] (14,-13) circle (2.5pt);

		\draw [line width=1pt,color=black] (17,-11)-- (18,-14);
		\draw [line width=1pt,color=black] (18,-14)-- (22,-14);
		\draw [line width=1pt,color=black] (22,-14)-- (20,-12);
		\draw [line width=1pt,color=black] (20,-12)-- (17,-11);
		
		\draw [fill=black] (18,-12) circle (2.5pt);
		\draw [fill=black] (18,-14) circle (2.5pt);
		\draw [fill=black] (20,-12) circle (2.5pt);
		\draw [fill=black] (20,-14) circle (2.5pt);
		\draw [fill=black] (22,-14) circle (2.5pt);
		\draw [fill=black] (17,-11) circle (2.5pt);

		\draw[color=black] (0,-15.2) node[fill=white] {$b(P)=5$ };
		
		\draw [line width=1pt,color=black] (0,-16)-- (0,-19);
		\draw [line width=1pt,color=black] (0,-19)-- (4,-18);
		\draw [line width=1pt,color=black] (4,-18)-- (4,-16);
		\draw [line width=1pt,color=black] (4,-16)-- (0,-16);
		
		\draw [fill=black] (0,-16) circle (2.5pt);
		\draw [fill=black] (0,-18) circle (2.5pt);
		\draw [fill=black] (2,-16) circle (2.5pt);
		\draw [fill=black] (2,-18) circle (2.5pt);
		\draw [fill=black] (4,-16) circle (2.5pt);
		\draw [fill=black] (4,-18) circle (2.5pt);
		\draw [fill=black] (0,-19) circle (2.5pt);

		\draw [line width=1pt,color=black] (6,-16)-- (6,-18);
		\draw [line width=1pt,color=black] (6,-18)-- (7,-19);
		\draw [line width=1pt,color=black] (7,-19)-- (10,-18);
		\draw [line width=1pt,color=black] (10,-18)-- (10,-16);
		\draw [line width=1pt,color=black] (10,-16)-- (6,-16);
		
		\draw [fill=black] (6,-16) circle (2.5pt);
		\draw [fill=black] (6,-18) circle (2.5pt);
		\draw [fill=black] (8,-16) circle (2.5pt);
		\draw [fill=black] (8,-18) circle (2.5pt);
		\draw [fill=black] (10,-16) circle (2.5pt);
		\draw [fill=black] (10,-18) circle (2.5pt);
		\draw [fill=black] (7,-19) circle (2.5pt);

		\draw [line width=1pt,color=black] (12,-16)-- (12,-18);
		\draw [line width=1pt,color=black] (12,-18)-- (14,-19);
		\draw [line width=1pt,color=black] (14,-19)-- (16,-18);
		\draw [line width=1pt,color=black] (16,-18)-- (16,-16);
		\draw [line width=1pt,color=black] (16,-16)-- (12,-16);
		
		\draw [fill=black] (12,-16) circle (2.5pt);
		\draw [fill=black] (12,-18) circle (2.5pt);
		\draw [fill=black] (14,-16) circle (2.5pt);
		\draw [fill=black] (14,-18) circle (2.5pt);
		\draw [fill=black] (16,-16) circle (2.5pt);
		\draw [fill=black] (16,-18) circle (2.5pt);
		\draw [fill=black] (14,-19) circle (2.5pt);

		\draw [line width=1pt,color=black] (15,-19)-- (16,-22);
		\draw [line width=1pt,color=black] (16,-22)-- (22,-22);
		\draw [line width=1pt,color=black] (22,-22)-- (18,-20);
		\draw [line width=1pt,color=black] (18,-20)-- (15,-19);
		
		\draw [fill=black] (16,-20) circle (2.5pt);
		\draw [fill=black] (18,-20) circle (2.5pt);
		\draw [fill=black] (16,-22) circle (2.5pt);
		\draw [fill=black] (18,-22) circle (2.5pt);
		\draw [fill=black] (20,-22) circle (2.5pt);
		\draw [fill=black] (22,-22) circle (2.5pt);
		\draw [fill=black] (15,-19) circle (2.5pt);

		\draw [line width=1pt,color=black] (0,-20)-- (0,-25);
		\draw [line width=1pt,color=black] (0,-25)-- (4,-24);
		\draw [line width=1pt,color=black] (4,-24)-- (0,-20);
		
		\draw [fill=black] (0,-22) circle (2.5pt);
		\draw [fill=black] (0,-24) circle (2.5pt);
		\draw [fill=black] (2,-22) circle (2.5pt);
		\draw [fill=black] (2,-24) circle (2.5pt);
		\draw [fill=black] (0,-20) circle (2.5pt);
		\draw [fill=black] (4,-24) circle (2.5pt);
		\draw [fill=black] (0,-25) circle (2.5pt);

		\draw [line width=1pt,color=black] (6,-20)-- (6,-24);
		\draw [line width=1pt,color=black] (6,-24)-- (7,-25);
		\draw [line width=1pt,color=black] (7,-25)-- (10,-24);
		\draw [line width=1pt,color=black] (10,-24)-- (6,-20);
		
		\draw [fill=black] (6,-22) circle (2.5pt);
		\draw [fill=black] (6,-24) circle (2.5pt);
		\draw [fill=black] (8,-22) circle (2.5pt);
		\draw [fill=black] (8,-24) circle (2.5pt);
		\draw [fill=black] (6,-20) circle (2.5pt);
		\draw [fill=black] (10,-24) circle (2.5pt);
		\draw [fill=black] (7,-25) circle (2.5pt);

		\draw [line width=1pt,color=black] (12,-20)-- (12,-24);
		\draw [line width=1pt,color=black] (12,-24)-- (14,-25);
		\draw [line width=1pt,color=black] (14,-25)-- (16,-24);
		\draw [line width=1pt,color=black] (16,-24)-- (12,-20);
		
		\draw [fill=black] (12,-22) circle (2.5pt);
		\draw [fill=black] (12,-24) circle (2.5pt);
		\draw [fill=black] (14,-22) circle (2.5pt);
		\draw [fill=black] (14,-24) circle (2.5pt);
		\draw [fill=black] (12,-20) circle (2.5pt);
		\draw [fill=black] (16,-24) circle (2.5pt);
		\draw [fill=black] (14,-25) circle (2.5pt);

		\draw[color=black] (0,-27.2) node[fill=white] {$b(P)=6$ };
		
		\draw [line width=1pt,color=black] (0,-28)-- (0,-31);
		\draw [line width=1pt,color=black] (0,-31)-- (4,-30);
		\draw [line width=1pt,color=black] (4,-30)-- (6,-28);
		\draw [line width=1pt,color=black] (6,-28)-- (0,-28);
		
		\draw [fill=black] (0,-28) circle (2.5pt);
		\draw [fill=black] (0,-30) circle (2.5pt);
		\draw [fill=black] (2,-28) circle (2.5pt);
		\draw [fill=black] (2,-30) circle (2.5pt);
		\draw [fill=black] (4,-28) circle (2.5pt);
		\draw [fill=black] (4,-30) circle (2.5pt);
		\draw [fill=black] (6,-28) circle (2.5pt);
		\draw [fill=black] (0,-31) circle (2.5pt);

		\draw [line width=1pt,color=black] (8,-28)-- (8,-30);
		\draw [line width=1pt,color=black] (8,-30)-- (9,-31);
		\draw [line width=1pt,color=black] (9,-31)-- (12,-30);
		\draw [line width=1pt,color=black] (12,-30)-- (14,-28);
		\draw [line width=1pt,color=black] (14,-28)-- (8,-28);
		
		\draw [fill=black] (8,-28) circle (2.5pt);
		\draw [fill=black] (8,-30) circle (2.5pt);
		\draw [fill=black] (10,-28) circle (2.5pt);
		\draw [fill=black] (10,-30) circle (2.5pt);
		\draw [fill=black] (12,-28) circle (2.5pt);
		\draw [fill=black] (12,-30) circle (2.5pt);
		\draw [fill=black] (14,-28) circle (2.5pt);
		\draw [fill=black] (9,-31) circle (2.5pt);

		\draw [line width=1pt,color=black] (13,-31)-- (14,-34);
		\draw [line width=1pt,color=black] (14,-34)-- (22,-34);
		\draw [line width=1pt,color=black] (22,-34)-- (13,-31);
		
		\draw [fill=black] (14,-32) circle (2.5pt);
		\draw [fill=black] (16,-32) circle (2.5pt);
		\draw [fill=black] (14,-34) circle (2.5pt);
		\draw [fill=black] (16,-34) circle (2.5pt);
		\draw [fill=black] (18,-34) circle (2.5pt);
		\draw [fill=black] (20,-34) circle (2.5pt);
		\draw [fill=black] (22,-34) circle (2.5pt);
		\draw [fill=black] (13,-31) circle (2.5pt);

		\draw[color=black] (0,-35.2) node[fill=white] {$b(P)=7$ };
		
		\draw [line width=1pt,color=black] (0,-36)-- (0,-39);
		\draw [line width=1pt,color=black] (0,-39)-- (4,-38);
		\draw [line width=1pt,color=black] (4,-38)-- (8,-36);
		\draw [line width=1pt,color=black] (8,-36)-- (0,-36);
		
		\draw [fill=black] (0,-36) circle (2.5pt);
		\draw [fill=black] (0,-38) circle (2.5pt);
		\draw [fill=black] (2,-36) circle (2.5pt);
		\draw [fill=black] (2,-38) circle (2.5pt);
		\draw [fill=black] (4,-36) circle (2.5pt);
		\draw [fill=black] (4,-38) circle (2.5pt);
		\draw [fill=black] (6,-36) circle (2.5pt);
		\draw [fill=black] (8,-36) circle (2.5pt);
		\draw [fill=black] (0,-39) circle (2.5pt);

		\draw [line width=1pt,color=black] (10,-36)-- (10,-38);
		\draw [line width=1pt,color=black] (10,-38)-- (11,-39);
		\draw [line width=1pt,color=black] (11,-39)-- (14,-38);
		\draw [line width=1pt,color=black] (14,-38)-- (18,-36);
		\draw [line width=1pt,color=black] (18,-36)-- (10,-36);
		
		\draw [fill=black] (10,-36) circle (2.5pt);
		\draw [fill=black] (10,-38) circle (2.5pt);
		\draw [fill=black] (12,-36) circle (2.5pt);
		\draw [fill=black] (12,-38) circle (2.5pt);
		\draw [fill=black] (14,-36) circle (2.5pt);
		\draw [fill=black] (14,-38) circle (2.5pt);
		\draw [fill=black] (16,-36) circle (2.5pt);
		\draw [fill=black] (18,-36) circle (2.5pt);
		\draw [fill=black] (11,-39) circle (2.5pt);

		\draw[color=black] (0,-41.2) node[fill=white] {$b(P)=8$ };
		
		\draw [line width=1pt,color=black] (0,-42)-- (0,-45);
		\draw [line width=1pt,color=black] (0,-45)-- (4,-44);
		\draw [line width=1pt,color=black] (4,-44)-- (10,-42);
		\draw [line width=1pt,color=black] (10,-42)-- (0,-42);
		
		\draw [fill=black] (0,-42) circle (2.5pt);
		\draw [fill=black] (0,-44) circle (2.5pt);
		\draw [fill=black] (2,-42) circle (2.5pt);
		\draw [fill=black] (2,-44) circle (2.5pt);
		\draw [fill=black] (4,-42) circle (2.5pt);
		\draw [fill=black] (4,-44) circle (2.5pt);
		\draw [fill=black] (6,-42) circle (2.5pt);
		\draw [fill=black] (8,-42) circle (2.5pt);
		\draw [fill=black] (10,-42) circle (2.5pt);
		\draw [fill=black] (0,-45) circle (2.5pt);

		\draw[color=black] (13,-41.2) node[fill=white] {$b(P)=9$ };
		
		\draw [line width=1pt,color=black] (22,-41)-- (10,-44);
		\draw [line width=1pt,color=black] (10,-44)-- (22,-44);
		\draw [line width=1pt,color=black] (22,-44)-- (22, -41);
		
		\draw [fill=black] (18,-42) circle (2.5pt);
		\draw [fill=black] (20,-42) circle (2.5pt);
		\draw [fill=black] (22,-42) circle (2.5pt);
		\draw [fill=black] (10,-44) circle (2.5pt);
		\draw [fill=black] (12,-44) circle (2.5pt);
		\draw [fill=black] (14,-44) circle (2.5pt);
		\draw [fill=black] (16,-44) circle (2.5pt);
		\draw [fill=black] (18,-44) circle (2.5pt);
		\draw [fill=black] (20,-44) circle (2.5pt);
		\draw [fill=black] (22,-44) circle (2.5pt);
		\draw [fill=black] (22,-41) circle (2.5pt);

	\end{tikzpicture}
	\caption{\label{1_interior_lattice_point}All 30 pseudo-integral polygons with denominator $2$ and exactly $1$ interior lattice points up to affine unimodular equivalence.}
\end{figure}
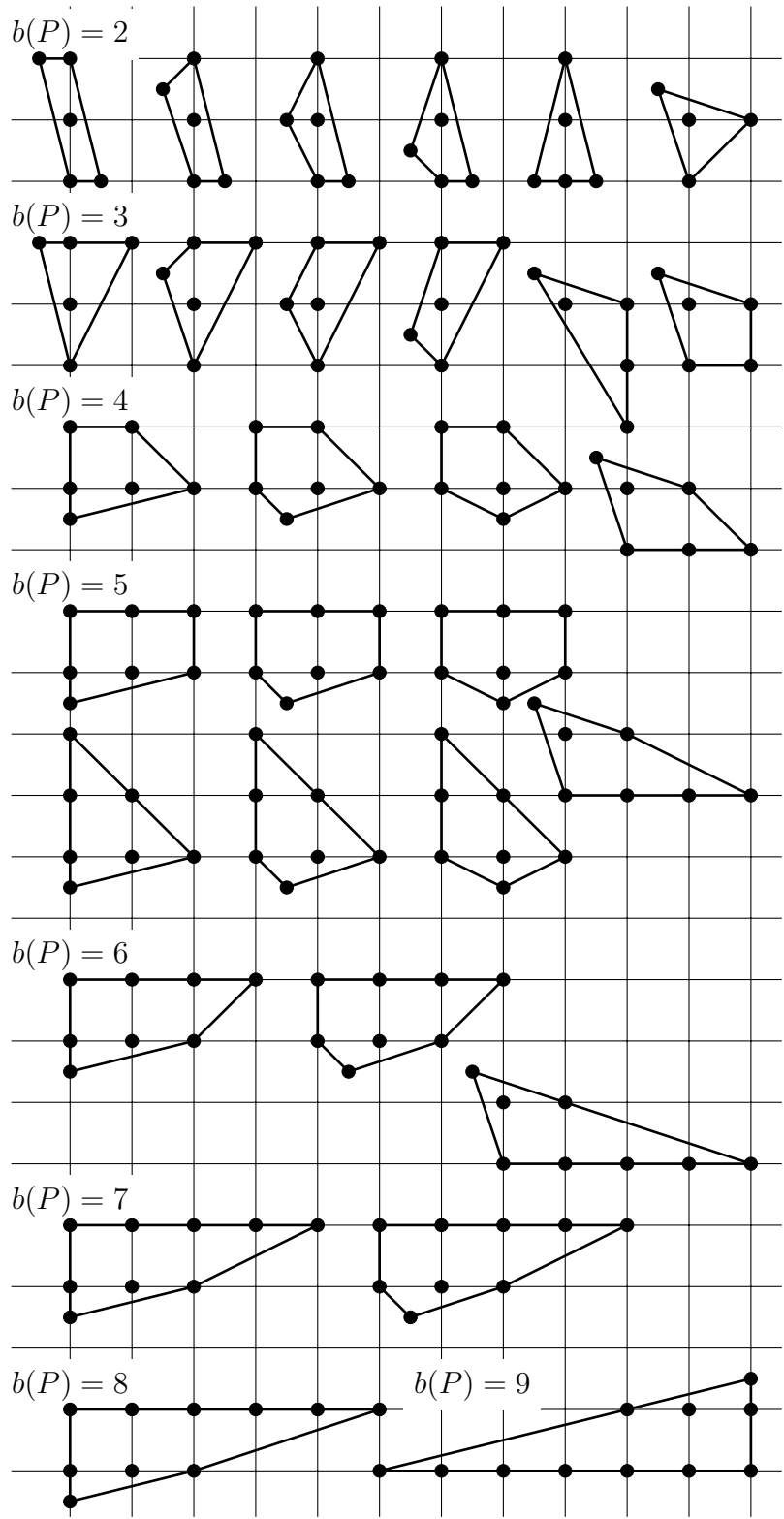

We get all lattice polygons without interior lattice points, $b(P)+1$ boundary lattice points and one edge with exactly $3$ lattice points or two adjacent edges with exactly $2$ lattice points by \ref{hollow_lattice_polygons}. So we obtain that $\inthull{P}$ is either a triangle affine unimodular equivalent to
\begin{align*}
\conv{(1,0),(b(P),0), (1,1)} \text{ or } \conv{(0,0), (2,0), (0,2)} 
\end{align*}
or it is a quadrilateral affine unimodular equivalent to
\begin{align*}
&\conv{(1,0), (b(P)-1,0), (2,1), (1,1)} \text{ or }\\ &\conv{(1,0), (b(P)-2,0), (3,1), (1,1)},
\end{align*}
where we should restrict to $b(P)\geq 3$ in the first case and $b(P)\geq 5$ in the second case to really get a quadrilateral and have each polygon only once.
	
If the closure of $P\setminus \inthull{P}$ in $\R^2$ is a triangle affine unimodular equivalent to $\conv{(0,0), (2,0), (0,\frac{1}{2})}$, then $P$ is either affine unimodular equivalent to a subpolygon of $\conv{(0,-1), (6,-1), (0,\frac{1}{2})}$ with half-integral vertex in $(0,\frac{1}{2})$ and $(0,0), (2,0)\in \partial P$, or has $\inthull{P}\cong \conv{(0,0), (2,0), (0,2)}$ and is therefore affine unimodular equivalent to the convex hull of $(0,0), (2,0), (0,2)$ and a half-integral vertex which can be chosen from the set  $\{(\frac{l}{2},-\frac{1}{2}) \mid l\in\Z_{\geq 0}, l\leq 5\}$. This is due to the convexity of $P$ and the possibilities for $\inthull{P}$ listed above. Thus, we have only a few possibilities for $P$ and can check all of them for affine unimodular equivalence.
	
If the closure of $P\setminus \inthull{P}$ in $\R^2$ is the union of two triangles sharing an edge, while each of them is affine unimodular equivalent to $\conv{(0,0), (1,0), (0,\frac{1}{2})}$, we must have two edges with exactly $2$ lattice points for $\inthull{P}$, and so we can assume by the above possibilities for $\inthull{P}$ and by a suitable affine unimodular transformation that we have either
\begin{align*}
\inthull{P}=&\conv{(1,0),(b(P),0), (1,1)} \text{ or }\\
\inthull{P}=&\conv{(1,0), (b(P)-1,0), (2,1), (1,1)},\ b(P)\geq 3.
\end{align*}

In the first case  $\inthull{P}=\conv{(1,0),(b(P),0), (1,1)}$, we can assume that the second coordinate of the half integral vertex is at least $\frac{3}{2}$ and get 
\begin{align*}
\frac{3}{2}\cdot \frac{b(P)-1}{2}\leq \area{P}=\frac{b(P)}{2}
\end{align*}
and therefore $b(P)\leq 3$. So we have either $P=\conv{(1,0),(2,0), (\frac{1}{2},2)}$ or $P=\conv{(1,0),(3,0), (\frac{1}{2},\frac{3}{2})}$.

In the second case $\inthull{P}=\conv{(1,0), (b(P)-1,0), (2,1), (1,1)}$ and $b(P)\geq 3$, we can assume with the help of the symmetry of $P$ that the half-integral vertex of $P$ is $(\frac{1}{2},\frac{3}{2})$, since it must be a point with lattice distance $\frac{1}{2}$ to two edges with $2$ lattice points. Due to the convexity of $P$, we get $P$ as a subpolygon of $\conv{(\frac{1}{2},\frac{3}{2}),(1,0), (5,0)}$ with half-integral vertex $(\frac{1}{2},\frac{3}{2})$. Thus $P$ is the polygon $\conv{(\frac{1}{2},\frac{3}{2}),(1,0), (b(P)-1,0), (2,1)}$.
	
All in all, we get up to affine unimodular equivalence the polygons shown in figure \ref{1_interior_lattice_point}, and these polygons are all pseudo-integral with denominator $2$, since they obey Pick's formula and have a lattice point on every edge.
\end{proof}

\section{Pseudo-integral polygons with exactly one interior lattice point and duals of LDP polygons}

To understand the case of half-integral pseudo-integral polygons $P\subseteq \R^2$ with exactly one interior lattice point on a deeper level, it is helpful to note that their duals belong to a special class of lattice polygons, namely \textit{LDP polygons}, which we want to introduce at the beginning of this section.

Period-collapse for duals of LDP polygons is well understood via so-called \textit{mutations}, i.e. some combinatorial transformations introduced in \cite{ACGK12} to study mirror symmetry for Fano manifolds (\cite{CCG13}, \cite{ACC16}). Mutations are discussed in the context of period collapse especially in \cite{KW18}. We do not use these tools here and work purely combinatorially.

\begin{defi}
Let $P\subseteq \R^n$ be a rational polytope, $0\in \interior{P}$, and we denote by $\left<\cdot,\cdot\right>\colon \R^n\times \R^n\to \R$ the standard inner product of $\R^n$. If we identify $(\R^n)^\ast$ with $\R^n$ using the inner product, then we define the \textit{dual polytope} $P^\ast \subseteq (\R^n)^\ast\cong \R^n$ of $P$ by
\begin{align*}
P^\ast :=\{y \in \R^n \mid \left<y,x\right> \geq -1 \ \forall x \in P \}.
\end{align*} 
\end{defi}

\begin{defi}
Let $H_{n,b}\subseteq \R^d$ be a rational hyperplane defined by a dual lattice vector $\left<\cdot,n\right> \in (\Z^d)^\ast$ and $b\in \Z$ as 
\begin{align*}
H_{n,b}=\left\{x\in \R^n \mid \left<x,n\right>=b \right\}.
\end{align*}
Then the \textit{lattice distance} $\ldist{H_{n,b}}{l}$ of a rational point $l \in \Q^d$ to the rational hyperplane $H_{n,b}$ is defined as
\begin{align*}
\ldist{H_{n,b}}{l}:=|\left<l,n\right>-b|.
\end{align*}
The lattice distance to a rational polytope of codimension 1 is defined as the lattice distance to the affine hull of the polytope.
\end{defi}

In general, $P^*$ is just a rational polytope, and we cannot even expect it to be a lattice polytope if $P$ is a lattice polytope. However, if $P^*$ is a lattice polytope, we have a theorem of Hibi which describes the behavior of the lattice points of $P$.

\begin{thm}[\cite{Hib92}]\label{Hibi}
Let $P\subseteq \R^n$ be a rational polytope with $0\in \mathrm{int}(P)$. Then the following conditions are equivalent
\begin{itemize}
\item $P^\ast$ is a lattice polytope.
\item For every facet $F\preceq P$ there is $a\in \Z_{\geq 1}$, so that $\ldist{F}{0}=\frac{1}{a}$.
\item $l(kP)=i((k+1)P)$ for all $k\in \Z_{\geq 0}$.
\end{itemize}
\end{thm}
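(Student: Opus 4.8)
The plan is to translate everything into the facet description of $P$, read off $(1)\Leftrightarrow(2)$ from polar duality, and then establish $(2)\Leftrightarrow(3)$ by analysing the integer levels of the supporting hyperplanes facet by facet.

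\emph{Setup and $(1)\Leftrightarrow(2)$.} For each facet $F\preceq P$ I would fix the primitive outer normal $n_F\in\Z^n$ and write $P=\{x\in\R^n\mid \langle x,n_F\rangle\le c_F\ \forall F\}$, where $c_F>0$ since $0\in\interior{P}$. Because $n_F$ is primitive, the affine hull of $F$ is the hyperplane $\langle x,n_F\rangle=c_F$, so $\ldist{F}{0}=c_F$. Polar duality identifies the vertices of $P^\ast=\{y\mid \langle y,x\rangle\ge-1\ \forall x\in P\}$ with the points $-n_F/c_F$, one for each facet $F$; indeed $\langle -n_F/c_F,x\rangle\ge-1$ on $P$ is exactly $\langle x,n_F\rangle\le c_F$. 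Since $n_F$ is primitive, $-n_F/c_F\in\Z^n$ holds if and only if $1/c_F\in\Z_{\ge1}$. Hence $P^\ast$ is a lattice polytope precisely when every $c_F=\ldist{F}{0}$ equals $\tfrac1a$ for some $a\in\Z_{\ge1}$, giving $(1)\Leftrightarrow(2)$.

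\emph{$(2)\Rightarrow(3)$.} Since $c_F>0$ one has $kP\subseteq\interior{(k+1)P}$, so $kP\cap\Z^n\subseteq\interior{(k+1)P}\cap\Z^n$ and $l(kP)\le i((k+1)P)$ for all $k$. Both sets are cut out facet-wise: a lattice point $x$ lies in $kP$ iff $\langle x,n_F\rangle\le kc_F$ and in $\interior{(k+1)P}$ iff $\langle x,n_F\rangle<(k+1)c_F$ for all $F$, and for lattice $x$ these inner products are integers. Thus the two lattice-point sets coincide as soon as, for every $F$ and every $k$, the integers $\le kc_F$ are exactly the integers $<(k+1)c_F$, i.e. $\floor{kc_F}=\ceil{(k+1)c_F}-1$. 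Writing $c_F=\tfrac1a$ and $k+1=qa+r$ with $0\le r<a$, a direct check gives $\floor{kc_F}=\ceil{(k+1)c_F}-1$ for every $k$, so the sets agree and $(3)$ follows.

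\emph{$(3)\Rightarrow(2)$, the main obstacle.} I would argue by contraposition, and here the difficulty is to upgrade a purely combinatorial failure to an honest lattice point. Suppose some facet $F_0$ has $c_0:=c_{F_0}=p/q$ in lowest terms with $p\ge2$. Summing $f(k):=\#\{m\in\Z\mid kc_0<m<(k+1)c_0\}$ over one period $k=0,\dots,q-1$ counts the integers in $(0,p)$ (the breakpoints $jp/q$ are non-integral), giving $p-1\ge1$; hence there are $k_0,m_0$ with $k_0c_0<m_0<(k_0+1)c_0$, and note $m_0/c_0\notin\Z$ since otherwise $m_0$ would be an integer multiple of $c_0$ strictly between $k_0c_0$ and $(k_0+1)c_0$. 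To realise a witness I would pass to a large dilate: for large $s$ with $sm_0/c_0\notin\Z$ put $M:=sm_0$ and $K:=\floor{M/c_0}$, so $Kc_0<M<(K+1)c_0$ and the slice $S:=(K+1)P\cap\{\langle x,n_0\rangle=M\}$ sits in the lattice hyperplane $\langle x,n_0\rangle=M$ (here $M\in\Z$ and $n_0$ primitive) just below the facet $(K+1)F_0$. As $K\to\infty$ the level $M/(K+1)\to c_0$, so $\tfrac1{K+1}S$ converges to the $(n-1)$-dimensional facet $F_0$ and $\mathrm{vol}_{n-1}(S)$ grows like $(K+1)^{n-1}$; therefore $S$ contains a lattice point in its relative interior once $K$ is large. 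Any such point lies in $\interior{(K+1)P}$ but not in $KP$, so $l(KP)<i((K+1)P)$ and $(3)$ fails. Together with $(1)\Leftrightarrow(2)$ this proves the equivalence of all three conditions; one may also note that Ehrhart--Macdonald reciprocity \ref{Ehrhart-Macdonald} recasts $(3)$ as the symmetry $\ehr{P}{k}=(-1)^n\ehr{P}{-k-1}$, but the argument above does not need it.
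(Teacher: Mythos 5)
The paper does not prove this statement: it is imported as a citation to Hibi's 1992 article, so there is no in-paper argument to compare against. Your proposal is a correct self-contained reconstruction. The equivalence $(1)\Leftrightarrow(2)$ via the identification of the vertices of $P^\ast$ with $-n_F/c_F$ is exactly right, and your facet-by-facet reduction of $(2)\Rightarrow(3)$ to the identity $\floor{k/a}=\ceil{(k+1)/a}-1$ is clean and complete. The counting argument at the start of $(3)\Rightarrow(2)$ (integers in $(0,p)$ avoid the non-integral breakpoints $jp/q$, so some gap $(k_0c_0,(k_0+1)c_0)$ captures an integer) correctly locates the offending level.

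The one step you should tighten is the final inference ``$\mathrm{vol}_{n-1}(S)$ grows like $(K+1)^{n-1}$; therefore $S$ contains a lattice point in its relative interior.'' Volume growth alone does not force a lattice point in a convex body (long thin slabs are a counterexample), so the implication as literally written is not valid. What saves you is the clause you state just before it: since $\tfrac{1}{K+1}S$ is the slice of $P$ at height $M/(K+1)\to c_0^-$ and hence converges in Hausdorff distance to the $(n-1)$-dimensional facet $F_0$ (concretely, $S\supseteq \tfrac{M}{c_0}F_0$ by star-shapedness of $P$ about $0$), the body $S$ contains a ball of radius tending to infinity inside the lattice hyperplane $H_M=\{\left<x,n_0\right>=M\}$. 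Because $M\in\Z$ and $n_0$ is primitive, $H_M\cap\Z^n$ is a full-rank translated lattice with covering radius independent of $M$, so that ball eventually contains a lattice point, which then lies in $\mathrm{relint}(S)=\interior{(K+1)P}\cap H_M$ and outside $KP$. Replace the volume count by this inradius/covering-radius argument and the proof is complete.
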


In particular, we see from the theorem that $P$ has only $0$ as an interior lattice point, if $P^\ast$ is a lattice polytope. We can force the rational polytope $P$ to behave more like a lattice polytope by requiring that $P^*$ has primitive vertices. This results in the following definitions.

\begin{defi}
Let $P\subseteq \R^n$ be a lattice polytope with $0\in \mathrm{int}(P)$.
If the coordinates of every vertex of $P$ are relatively prime, then $P$ is called a \textit{Fano polytope}. The number $\denom{P^\ast}$ is called the \textit{Gorenstein index $i_g$ of $P$}. If $P^\ast$ is also a lattice polytope, then $P$ is called a \textit{reflexive polytope}, which is the same as a Fano polytope of Gorenstein index $1$. Fano polytopes of dimension $2$ are also called \textit{LDP polygons}.
\end{defi}

Fano polytopes correspond to toric Fano varieties via toric geometry. For a survey of Fano polytopes see \cite{KN12}. Reflexive polytopes are especially known for their role in combinatorial mirror symmetry, since they provide a way to construct mirror pairs of Calabi-Yau varieties (\cite{Bat94}).

If $P^\ast$ is a Fano polytope, we get the following corollary of Hibi's theorem.

\begin{coro}\label{Hibi_corollary}
Let $P\subseteq \R^n$ be a rational polygon with $0\in \mathrm{int}(P)$. Then the following conditions are equivalent
\begin{itemize}
\item $P^\ast$ is a Fano polytope.
\item For each facet $F\preceq P$ we have $\ldist{F}{0}=1$.
\item $l(kP)=i((k+1)P)$ for all $k\in \Z_{\geq 0}$ and the affine hull of every facet of $P$ contains lattice points.
\end{itemize}
\end{coro}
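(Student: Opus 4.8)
The plan is to use Hibi's theorem \ref{Hibi} as the backbone and to translate the Fano condition on $P^\ast$ into a statement about the lattice distances of the facets of $P$. First I would set up the standard dictionary between the facets of $P$ and the vertices of $P^\ast$. Since $0\in\interior{P}$, every facet $F\preceq P$ lies in a unique affine hyperplane, which can be written as $\{x\mid \langle x,u_F\rangle=c_F\}$, where $u_F\in\Z^n$ is the primitive outer normal of $F$ and $c_F>0$. By definition of the lattice distance we then have $\ldist{F}{0}=c_F$, and the vertex of $P^\ast$ supported by $F$ is $v_F=-u_F/c_F$, since $\langle v_F,x\rangle=-1$ holds exactly on the affine hull of $F$.

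Next I would prove the equivalence of the first two conditions. By Hibi's theorem \ref{Hibi}, $P^\ast$ is a lattice polytope if and only if for every facet there is $a_F\in\Z_{\geq 1}$ with $c_F=\frac{1}{a_F}$, and in that case $v_F=-a_Fu_F$. Because $u_F$ is primitive, the greatest common divisor of the coordinates of $v_F$ equals $a_F$, so $v_F$ is a primitive lattice vector precisely when $a_F=1$, i.e. when $\ldist{F}{0}=c_F=1$. Since $P^\ast$ is a Fano polytope exactly when it is a lattice polytope all of whose vertices are primitive, this yields the equivalence of the first condition with the condition $\ldist{F}{0}=1$ for every facet $F$.

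It remains to connect this with the third condition. The key observation is that, because $u_F$ is primitive, the linear form $\langle\,\cdot\,,u_F\rangle$ maps $\Z^n$ surjectively onto $\Z$; hence the affine hull of $F$ contains a lattice point if and only if $c_F\in\Z$. Granting the second condition, Hibi's theorem gives $l(kP)=i((k+1)P)$ for all $k\in\Z_{\geq 0}$, and $c_F=1\in\Z$ places a lattice point on every facet-hyperplane, so the second condition implies the third. Conversely, assuming the third condition, the identity $l(kP)=i((k+1)P)$ forces $P^\ast$ to be a lattice polytope by Hibi, whence $c_F=\frac{1}{a_F}$; combining this with $c_F\in\Z$ forces $a_F=1$ and thus $\ldist{F}{0}=1$, which is the second condition.

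The main obstacle I expect is purely bookkeeping: fixing the sign and normalization conventions so that the vertex of $P^\ast$ attached to a facet comes out as $-a_Fu_F$ with $u_F$ primitive, and then reading off that its primitivity is equivalent to $a_F=1$. Once this dictionary is in place, all three equivalences follow from Hibi's theorem \ref{Hibi} together with the elementary surjectivity of a primitive linear form on the lattice.
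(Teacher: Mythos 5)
Your proof is correct and follows exactly the route the paper intends: the paper states this as an immediate corollary of Hibi's theorem \ref{Hibi} without writing out a proof, and your argument supplies the missing details via the standard dictionary $F\mapsto v_F=-u_F/c_F$ between facets of $P$ and vertices of $P^\ast$, reading off that primitivity of $v_F$ forces $a_F=1$ and that a primitive normal makes "affine hull contains a lattice point" equivalent to $c_F\in\Z$. All three equivalences check out.
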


We now focus on the $2$ dimensional pseudo-integral case and in particular get all pseudo-integral polygons with exactly one interior lattice point as duals of LDP polygons.

\begin{coro}\label{pseudo_integral_1_interior_point}\label{Hibi_corollary2}
Let $P\subseteq \R^2$ be a rational pseudo-integral polygon with $0\in \mathrm{int}(P)$. Then the following conditions are equivalent
\begin{itemize}
\item $P^\ast$ is an LDP polygon.
\item For each edge $E\preceq P$ we have $\ldist{E}{0}=1$.
\item $l(kP)=i((k+1)P)$ for all $k\in \Z_{\geq 0}$ and the affine hull of every edge of $P$ contains lattice points.
\item $\ehr{P}{t}=\area{P}t^2+\area{P}t+1$.
\item $\mathrm{int}(P)\cap \Z^2=\{(0,0)\}$.
\end{itemize}
\end{coro}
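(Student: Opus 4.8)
The plan is to split the five conditions into the ``LDP'' group $\{(1),(2),(3)\}$ and the ``pseudo-integral'' group $\{(4),(5)\}$, to verify each group internally, and then to connect them by one implication in each direction. The equivalence of (1), (2) and (3) needs no new argument: a two-dimensional Fano polytope is exactly an LDP polygon and a facet of a polygon is an edge, so the three bullet points of Corollary \ref{Hibi_corollary} specialized to $n=2$ are verbatim our (1), (2) and (3). For the other group I would use that $P$ is pseudo-integral, so by Theorem \ref{pseudo_crit_general} we have $\ehr{P}{t}=\area{P}t^2+\frac{b(P)}{2}t+1$, which obeys Pick's formula $\area{P}=i(P)+\frac{b(P)}{2}-1$. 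Condition (4) asks exactly that the linear coefficient $\frac{b(P)}{2}$ equal $\area{P}$; substituting this into Pick's formula gives $i(P)=1$, and since $0\in\interior{P}$ this is precisely (5). Hence (4) $\Leftrightarrow$ (5).

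To connect the groups I would first prove (1) $\Rightarrow$ (5): if $P^\ast$ is an LDP polygon then $P^\ast$ is in particular a lattice polytope, so Hibi's theorem \ref{Hibi} (as recorded in the remark after it) forces $0$ to be the only interior lattice point of $P$. For the opposite bridge I aim at (4) $\Rightarrow$ (3). Its first half is immediate from Ehrhart-Macdonald reciprocity \ref{Ehrhart-Macdonald} and the polynomial in (4): for every $k\ge 0$ one computes $i((k+1)P)=\ehr{P}{-(k+1)}=\area{P}(k+1)k+1=\ehr{P}{k}=l(kP)$. Its second half---that the affine hull of every edge of $P$ contains lattice points---carries the real content and does not follow from $P^\ast$ being a lattice polytope alone, which is why Corollary \ref{Hibi_corollary} lists it as a separate condition. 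Proving this second half from pseudo-integrality is the step I expect to be the main obstacle.

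For that step I would first strengthen $b(kP)=kb(P)$ from the range $k\le\denom{P}$ in Theorem \ref{pseudo_crit_general} to all $k\ge 1$, directly from $b(kP)=l(kP)-i(kP)=\ehr{P}{k}-\ehr{P}{-k}=b(P)\,k$. I then fix a consistent (half-open) bookkeeping of boundary points and write $b(kP)=V(k)+\sum_E I_E(k)$, where $V(k)$ counts the vertices $v_j$ with $kv_j\in\Z^2$ and $I_E(k)$ counts lattice points in the relative interior of the dilate $kE$; each summand is a quasi-polynomial in $k$. For an edge $E$ let $c_E$ be the least positive integer with $\mathrm{aff}(E)\cap\frac{1}{c_E}\Z^2\neq\emptyset$, necessarily a divisor of $\denom{P}$; then $I_E(k)=0$ unless $c_E\mid k$, while on the progression $c_E\mid k$ the line $\mathrm{aff}(kE)$ carries lattice points spaced by the primitive edge direction, so $I_E(k)$ grows linearly with leading coefficient $\frac{\lambda_E}{\denom{P}}$, where $\lambda_E$ is the lattice length of the edge of $\denom{P}\cdot P$ lying over $E$. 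Consequently the degree-one coefficient function of $b(kP)$ is $k\mapsto\sum_E\frac{\lambda_E}{\denom{P}}\,\mathbf{1}[c_E\mid k]$, while the bounded term $V(k)$ contributes nothing in degree one.

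Comparing with $b(kP)=b(P)\,k$, whose degree-one coefficient is the constant $b(P)$, forces $\sum_E\frac{\lambda_E}{\denom{P}}\,\mathbf{1}[c_E\mid k]$ to be independent of $k$; subtracting its value at $k=1$ from its value at $k=\mathrm{lcm}_E(c_E)$ leaves $\sum_{E:\,c_E\ge 2}\frac{\lambda_E}{\denom{P}}=0$, and since every $\lambda_E\ge 1$ there is no edge with $c_E\ge 2$. Thus every edge's affine hull contains lattice points, completing (4) $\Rightarrow$ (3) and closing the cycle (4) $\Rightarrow$ (3) $\Rightarrow$ (2) $\Rightarrow$ (1) $\Rightarrow$ (5) $\Rightarrow$ (4), where the three middle implications are Corollary \ref{Hibi_corollary} together with the remark after Theorem \ref{Hibi}. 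The only delicate bookkeeping point is that vertices of $P$ need not be lattice points, so the split into $V(k)$ and the $I_E(k)$ must use half-open edges; after that, only the degree-one coefficients matter, and the fractional-part behaviour at the vertices is irrelevant.
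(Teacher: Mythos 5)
Your proof is correct, and most of its skeleton coincides with the paper's: the equivalence of the first three conditions is quoted from Corollary \ref{Hibi_corollary}, the link between the last two conditions comes from Pick's formula via Theorem \ref{pseudo_crit_general}, and Ehrhart--Macdonald reciprocity yields $l(kP)=i((k+1)P)$ from the fourth condition. Where you genuinely diverge is in the one step carrying real content, namely recovering the edge condition from $\ehr{P}{t}=\area{P}t^2+\area{P}t+1$. The paper first applies Hibi's Theorem \ref{Hibi} (using $l(kP)=i((k+1)P)$) to get $\ldist{E}{0}\le 1$ for every edge, and then compares the exact area decomposition
\begin{align*}
\area{P}=\sum_{E\preceq P}\ldist{E}{0}\cdot \frac{l(\denom{P}\cdot E)-1}{2\denom{P}}
\end{align*}
with $\area{P}=\frac{b(P)}{2}$ and $b(\denom{P}\cdot P)=\denom{P}\cdot b(P)$; equality in the resulting estimate forces $\ldist{E}{0}=1$ for every edge, i.e.\ the second condition directly. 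You never use the bound $\ldist{E}{0}\le 1$: instead you upgrade $b(kP)=k\,b(P)$ to all $k\ge 1$ by reciprocity and compare the degree-one coefficient function of the quasi-polynomial $b(kP)=V(k)+\sum_E I_E(k)$ with the constant $b(P)$, concluding that no edge can have $I_E$ vanishing on a nontrivial residue class. Both arguments are sound; the paper's is shorter and stays within tools already set up, while yours is more self-contained at this step and isolates the useful general fact that a pseudo-integral polygon satisfies $b(kP)=k\,b(P)$ for \emph{all} $k\ge 1$, not only for $k\le\denom{P}$, from which the edge condition follows by pure counting. The one bookkeeping point your version needs --- that the vertex term $V(k)$ is bounded and hence invisible in degree one, so non-lattice vertices cause no trouble --- you have already flagged correctly.
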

\begin{proof}
Since $P$ is pseudo-integral, we have $\ehr{P}{t}=\area{P}t^2+\frac{b(P)}{2}t+1$ and we have Pick's formula for $P$ by \ref{pseudo_crit_general}.

We already know that the first three conditions are equivalent by \ref{Hibi_corollary} and they imply $\mathrm{int}(P)\cap \Z^2=\{(0,0)\}$, since we cannot have more interior lattice points, if every edge $E\preceq P$ has $\ldist{E}{0}=1$. With Pick's formula we get from $i(P)=1$ that $b(P)=2\area{P}-2i(P)+2=2\area{P}$ and therefore we have the Ehrhart polynomial $\ehr{P}{t}=\area{P}t^2+\area{P}t+1$.

Conversely, if $\ehr{P}{t}=\area{P}t^2+\area{P}t+1$ and therefore $2\area{P}=b(P)$, we get for all $k\in \Z_{\geq 0}$ by Ehrhart-Macdonald reciprocity \ref{Ehrhart-Macdonald}
\begin{align*}
l(kP)=&\area{P}k^2+\area{P}k+1=\area{P}(k+1)^2-\area{P}(k+1)+1\\=&i((k+1)P).
\end{align*}
Thus we also have $\ldist{E}{0}\leq 1$ for all edges $E\preceq P$ by \ref{Hibi} and so we get with $\denom{P}\cdot b(P)=b(\denom{P}\cdot P)$ from \ref{pseudo_crit_general} and using the lattice length of an edge $\text{length}(E)=\frac{l(\denom{P}\cdot E)-1}{2\denom{P}}$ that
\begin{align*}
\area{P}=\sum_{E\preceq P}\ldist{E}{0}\cdot \frac{\text{length}(E)}{2}=\sum_{E\preceq P}\ldist{E}{0}\cdot \frac{l(\denom{P}\cdot E)-1}{2\denom{P}}\leq  \frac{b(P)}{2}.
\end{align*}
Since $2\area{P}=b(P)$ we must have equality and so every edge must have lattice distance $1$ from $0$.
\end{proof}

We know all LDP polygons up to Gorenstein index $i_g=17$ by \cite{KKN10}. Thus, by \ref{pseudo_integral_1_interior_point}, the pseudo-integral polygons with $i(P)=1$ and denominator at most $17$ are up to affine unimodular equivalence among the duals of these polygons. We can test whether the duals are really pseudo-integral and get the following result, which in particular gives a second classification of pseudo-integral polygons with denominator $2$ and exactly one interior lattice point (for the complete list of the polygons in the result see \cite{Boh24a}).

\begin{classify}\label{classification_dual_fano}
Up to affine unimodular equivalence there are exactly $320$ pseudo-integral polygons with $i(P)=1$ and denominator $d$ at most $17$. They have at most $9$ boundary lattice points and there are no examples for $d\in \{7,13,17\}$.

The number of polygons depending on the denominator and the number of boundary lattice points is given in the following table, where the $b$-th entry in the vector in the third column gives the number of pseudo-integral polygons with $i(P)=1$ and $b$ boundary lattice points up to affine unimodular equivalence.

\begin{center}
\begin{tabular}{c|c|c}
$d$&$\#$ LDP polygons with $i_g=d$ &$\#$ pseudo-integral polygons \\\hline\hline
$1$&$16$&$(0,0,1,3,2,4,2,3,1)$\\
$2$&$30$&$(0,6,6,4,7,3,2,1,1)$\\
$3$&$99$&$(1,6,1,3,2,1,1,1,0)$\\
$4$&$91$&$(1,1,3,2,1,1,1,1,0)$\\
$5$&$250$&$(1,1,2,2,1,2,1,1,1)$\\
$6$&$379$&$(11,10,8,12,10,4,1,0)$\\
$7$&$429$&$(0,0,0,0,0,0,0,0,0)$\\
$8$&$307$&$(0,0,0,0,1,0,0,0,0)$\\
$9$&$690$&$(0,0,0,1,0,0,1,0,0)$\\
$10$&$916$&$(12,2,6,4,3,1,1,1,1)$\\
$11$&$939$&$(0,0,0,1,0,0,0,1,0)$\\
$12$&$1279$&$(12,10,14,15,6,5,3,1,0)$\\
$13$&$1142$&$(0,0,0,0,0,0,0,0,0)$\\
$14$&$1545$&$(0,1,1,1,2,0,0,0,0)$\\
$15$&$4312$&$(2,13,11,21,9,6,5,4,0)$\\
$16$&$1030$&$(0,0,0,0,0,0,1,0,0)$\\
$17$&$1892$&$(0,0,0,0,0,0,0,0,0)$\\
\end{tabular}
\end{center}
\end{classify}

Even more is known about LDP triangles. The manual classifications for $i_g=2$ in \cite{Dai06} and $i_g=3$ in \cite{Dai09} have recently been extended to $i_g=200$ in the toric part of \cite{HHHS25} and to $i_g=1000$ in \cite{Bae25}. We have done the computations for the LDP triangles up to $i_g=1000$ and get the following result (for the complete list of the polygons in the result see \cite{Boh24a}).

\begin{classify}
There are up to affine unimodular equivalence exactly $135$ pseudo-integral triangles with $i(P)=1$ and denominator at most $1000$. All triangles have between $1$ and $9$ boundary lattice points, and there is no example for $b(P)=7$. Depending on the number of boundary lattice points, these triangles are distributed as shown in the following table.

\begin{center}
\begin{tabular}{c||c|c|c|c|c|c|c|c|c}
$b(P)$&$1$&$2$&$3$&$4$&$5$&$6$&$7$&$8$&$9$\\\hline
\# pseudo-integral triangles&$39$&$26$&$19$&$8$&$15$&$14$&$0$&$8$&$6$
\end{tabular}
\end{center}
\end{classify}

\begin{rem}
Experimental data lead to several questions about pseudo-integral polygons with exactly one interior lattice point. Are there always at most $9$ boundary lattice points?
A positive answer to this question for triangles was announced in a talk by Tyrrell B. McAllister \cite{MAW22}. Are there infinitely many examples that are triangles? Are there infinitely many examples for a given number of boundary lattice points? Is there an example which is a triangle and has exactly $7$ boundary lattice points?

The article \cite{MAW24}, that was written after the first preprint of this article, provides answers to the questions about triangles. There are pseudo-integral triangles with one interior lattice point and $b$ boundary lattice points if and only if $1\leq b \leq 9, b\neq 7$. Moreover, there are infinitely many of them in each allowed case. These triangles can be considered a generalization of the \textit{Markov triangles} mentioned in, for example, \cite[3.8]{KW18}. For more connections to algebraic geometry, see also \cite{HK24}.

We leave the other questions open and concentrate now on the half-integral case. In the half-integral case we see that every dual of an LDP polygon is pseudo-integral. This gives a new possibility to classify LDP polygons with $i_g=2$ as duals of pseudo-integral polygons with denominator $2$ and exactly one interior lattice point. Therefore, we give a proof of this observation without using the above data in the following theorem.
\end{rem}

\begin{thm}
Let $P\subseteq \R^2$ be a half-integral polygon with $0\in \mathrm{int}(P)$. Then the following conditions are equivalent
\begin{itemize}
\item $P^\ast$ is an LDP polygon.
\item Each edge of $P$ has lattice distance $1$ from $0$.
\item $|kP\cap \Z^2|=|\mathrm{int}((k+1)P)\cap \Z^2|$ for all $k\geq 0$ and the affine hull of every edge of $P$ contains lattice points.
\item $\ehr{P}{t}=\area{P}t^2+\area{P}t+1$.
\item $\mathrm{int}(P)\cap \Z^2=\{(0,0)\}$ and $P$ is pseudo-integral.
\end{itemize}
\end{thm}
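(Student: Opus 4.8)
The plan is to deduce the theorem from Corollary~\ref{pseudo_integral_1_interior_point}, which already proves the equivalence of the five analogous conditions under the blanket assumption that $P$ is pseudo-integral. Conditions (4) and (5) carry pseudo-integrality with them: a rational polygon whose Ehrhart quasi-polynomial is an honest polynomial is pseudo-integral by definition, so (4) forces it, while (5) assumes it outright. The first three conditions are equivalent to one another verbatim by Corollary~\ref{Hibi_corollary} (in dimension two a Fano polytope is an LDP polygon and a facet is an edge), and this step uses nothing about quasi-period collapse. Hence the entire theorem comes down to one genuinely new claim: \emph{for half-integral $P$ with $0\in\interior{P}$, condition (2) implies that $P$ is pseudo-integral.}

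To prove this claim I would invoke Corollary~\ref{pseudo_crit}, by which a half-integral polygon is pseudo-integral exactly when it has a lattice point on every edge and obeys Pick's formula. For the lattice point on each edge, fix an edge $E$ and apply an affine unimodular transformation; since $\ldist{E}{0}=1$, the affine hull of $E$ is $\{\langle x,n\rangle =1\}$ for a primitive $n$, so we may take it to be the line $\{y=1\}$. The vertices of $P$ lie in $\tfrac12\Z^2$, hence the endpoints of $E$ are $(\tfrac{a}{2},1)$ and $(\tfrac{c}{2},1)$ with integers $a<c$; if $c-a\geq 2$ the interval $[\tfrac{a}{2},\tfrac{c}{2}]$ has length at least $1$ and meets $\Z$, and if $c-a=1$ one endpoint already has integral first coordinate. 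Either way $E\cap\Z^2\neq\emptyset$. I expect this spacing argument to be the crux of the proof and the one place where half-integrality is indispensable: for larger denominators an edge at lattice distance $1$ need not meet the lattice, which is why no analogous statement holds there. By Corollary~\ref{pseudo_crit} this gives $b(2P)=2b(P)$.

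For Pick's formula I would first note that $\ldist{E}{0}=1$ on every edge forces $i(P)=1$ (by Theorem~\ref{Hibi}, since condition (2) makes $P^\ast$ a lattice polytope). Triangulating $P$ from the origin and using $\mathrm{length}(E)=\tfrac{l(2E)-1}{2}$ together with $\sum_{E\preceq P}(l(2E)-1)=b(2P)$, I obtain $\area{P}=\sum_{E\preceq P}\ldist{E}{0}\cdot\tfrac{\mathrm{length}(E)}{2}=\tfrac12\sum_{E\preceq P}\mathrm{length}(E)=\tfrac{b(2P)}{4}=\tfrac{b(P)}{2}$, where the last equality is $b(2P)=2b(P)$ from the previous step. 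Since $i(P)=1$ this reads $\area{P}=i(P)+\tfrac{b(P)}{2}-1$, so Pick's formula holds and Corollary~\ref{pseudo_crit} yields that $P$ is pseudo-integral, proving the claim.

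It then remains only to assemble the equivalences. Each of (1), (2), (3) implies (2) (they are equivalent), hence by the claim makes $P$ pseudo-integral; with $P$ pseudo-integral and $\interior{P}\cap\Z^2=\{(0,0)\}$ in force, Corollary~\ref{pseudo_integral_1_interior_point} returns conditions (4) and (5). Conversely, (4) and (5) each supply pseudo-integrality, and then Corollary~\ref{pseudo_integral_1_interior_point}, whose last condition $\interior{P}\cap\Z^2=\{(0,0)\}$ is exactly the non-pseudo-integral half of (5) and whose Ehrhart condition is exactly (4), delivers the remaining conditions. This closes the chain of equivalences.
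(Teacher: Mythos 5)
Your proposal is correct and takes essentially the same approach as the paper: both reduce, via Corollaries \ref{Hibi_corollary} and \ref{Hibi_corollary2}, to the single new claim that condition (2) forces pseudo-integrality, and both verify the two criteria of Corollary \ref{pseudo_crit} — a lattice point on every edge (your explicit half-integral spacing argument is just the paper's terse ``since $\denom{P}=2$'' spelled out) and Pick's formula via the identity $\area{P}=\frac{b(2P)}{4}$ combined with $i(P)=1$ from Hibi's theorem. The only cosmetic difference is that the paper obtains this identity by computing $\area{2P}=b(2P)$ on the dilate, whereas you triangulate $P$ from the origin directly.
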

\begin{proof}
By \ref{Hibi_corollary} and \ref{Hibi_corollary2} it is sufficient to show that the first three conditions imply that $P$ is pseudo-integral. If the affine hull of every edge of $P$ contains lattice points, then we have lattice points on every edge since $\denom{P}=2$ and we get $b(2P)=2b(P)$. Thus, with \ref{pseudo_crit} it suffices to show that the first three conditions imply that $P$ obeys Pick's formula.

Since every edge of $P$ has lattice distance $1$ from $0$, every edge of $2P$ has lattice distance $2$ from $0$. Thus, $b(2P)=\area{2P}=4\cdot \area{P}$ and we get
\begin{align*}
i(P)+\frac{b(P)}{2}-1=1+\frac{b(2P)}{4}-1=\area{P}.
\end{align*}
\end{proof}

\begin{rem}
We can also see with \cite[3.4]{KW18} that all duals of LDP polygons with Gorensteinindex $2$ are pseudo-integral. This is because all cyclic quotient singularities of local index $2$ are $T$-singularities. We see this combinatorially because every lattice polygon with lattice height $2$ has an even number of boundary lattice points due to Pick's formula. 
	
Do not forget, that the theorem is not correct for polygons with higher denominators, as we saw in \ref{classification_dual_fano}. For example, we have $\Delta:=\conv{(1,0),(0,-\frac{1}{3}),(-1,2)}$ with dual $\Delta^\ast=\conv{(7,3),(-1,-1),(-1,3)}$, which is an LDP polygon of Gorenstein index $3$, but the period sequence of $\Delta$ is $(3,1,1)$.
\end{rem}

We end this section with a formula for the number of boundary lattice points of $P$ in the half-integral case, depending on the number of interior and boundary lattice points of the dual LDP polygon. This generalizes the equation $b(P)+b(P^\ast)=12$, which we have for reflexive polygons (\cite{PRV00}). We therefore use the following even more general theorem for LDP polygons, which is itself a combinatorial interpretation of the \textit{stringy Libgober-Wood identity} from \cite{LW90}. A purely combinatorial proof of this general theorem was given in \cite{BHSdW23}.

\begin{thm}[\protect{\cite[Corollary 4.5]{BS17}}]\label{area_LDP_polygon} Let $P\subseteq \R^2$ be dual to an LDP polygon. Then
\begin{align*}
2\cdot(\area{P}+\area{P^\ast})=12 \sum_{l\in P^\ast\cap \Z^2} (\kappa_{P^*}(l)+1)^2
\end{align*}
with
\begin{align*}
\kappa_{P^*}(l)=-\min \{\lambda \in \R_{\geq 0} \mid l\in \lambda P^\ast\}.
\end{align*}
In particular, $2(\area{P}+\area{P^\ast})\geq 12$ with equality if and only if $P$ is reflexive.
\end{thm}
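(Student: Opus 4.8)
The plan is to write $Q:=P^\ast$, so that $Q$ is the given LDP polygon and $P=Q^\ast$ is its rational dual, and to reinterpret the summand geometrically. Let $\mu_Q(l):=\min\{\lambda\ge 0\mid l\in\lambda Q\}$ denote the Minkowski gauge of $Q$; then $\kappa_{P^*}(l)=\kappa_Q(l)=-\mu_Q(l)$, so the weight is $\kappa_Q(l)+1=1-\mu_Q(l)$. Since $0\in\interior{Q}$, this weight equals $1$ at the origin, vanishes on $\partial Q$, and lies strictly between $0$ and $1$ at the remaining interior lattice points. Because the vertices of $Q$ are primitive, no interior lattice point other than $0$ lies on a ray through a vertex, so after decomposing $Q=\bigcup_E T_E$ into the lattice triangles $T_E=\conv{0,E}$ over the edges $E$ of $Q$, every nonzero lattice point carrying positive weight lies in the relative interior of exactly one $T_E$. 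Thus the right-hand side becomes $12\bigl(1+\sum_E\sigma_E\bigr)$ with $\sigma_E:=\sum_{l\in\interior{T_E}\cap\Z^2}(1-\mu_Q(l))^2$, and on each $T_E$ the gauge $\mu_Q$ is the single affine-linear form that is $0$ at the origin and $1$ on $E$.

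Second, I would pin down the continuous model. Using the radial scaling $\{x\in T_E\mid\mu_Q(x)\le r\}=rT_E$, the area distributes with density $2r\,\area{T_E}$ in the gauge variable, so $\int_{T_E}(1-\mu_Q(x))^2\,dx=\area{T_E}\int_0^1 2r(1-r)^2\,dr=\frac16\area{T_E}$. Summing over edges gives $12\int_Q(1-\mu_Q(x))^2\,dx=2\area{Q}$, which is exactly the continuous shadow of the identity. Hence it remains to show that the discrete-minus-continuous defect, together with the origin term, accounts for $2\area{P}$:
\begin{align*}
12+12\sum_E\Bigl(\sigma_E-\tfrac16\area{T_E}\Bigr)=2\area{P}.
\end{align*}

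Third, I would compute each $\sigma_E$ explicitly. Setting $d_E:=\ldist{E}{0}$ and choosing unimodular coordinates in which $\mu_Q$ on $T_E$ is the height $x_2/d_E$, the triangle $T_E$ has vertices $(0,0),(a,d_E),(b,d_E)$ with $\gcd(a,d_E)=\gcd(b,d_E)=1$ and edge lattice length $L_E=b-a$. Slicing by height $j$ and counting the integers in the open interval $(ja/d_E,\,jb/d_E)$ via the sawtooth function $((\cdot))$ yields
\begin{align*}
\sigma_E=\sum_{j=1}^{d_E-1}\Bigl(1-\tfrac{j}{d_E}\Bigr)^2\Bigl[\tfrac{jL_E}{d_E}-\Bigl(\Bigl(\tfrac{jb}{d_E}\Bigr)\Bigr)+\Bigl(\Bigl(\tfrac{ja}{d_E}\Bigr)\Bigr)\Bigr].
\end{align*}
The polynomial part reproduces $\tfrac16\area{T_E}$ up to lower-order terms in $d_E$, while the two remaining sums are generalized Dedekind sums attached to the edge. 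I would then evaluate the total defect by Dedekind–Rademacher reciprocity and reassemble the edge contributions around each vertex $v$ of $Q$. Since the vertices of $P=Q^\ast$ are exactly the points $-n_E/d_E$ (with $n_E$ the primitive outer normal of $E$) and the edges of $P$ correspond to the vertices of $Q$, the reciprocity cross-terms for two edges $E,E'$ meeting at $v$ should combine into $\tfrac12\,|\det(n_E/d_E,\,n_{E'}/d_{E'})|$, whose sum over all vertices of $Q$ is precisely $\area{P}$. This is the step I expect to be the main obstacle: making the Dedekind-sum bookkeeping exact and matching it cone-by-cone to the central triangulation of $P$, rather than only recovering the leading term asymptotically.

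Finally, the displayed inequality is immediate once the identity is in place. Every summand $(\kappa_Q(l)+1)^2$ is nonnegative and the term $l=0$ already contributes $1$, so $2(\area{P}+\area{P^\ast})=12\sum_{l\in P^\ast\cap\Z^2}(\kappa_Q(l)+1)^2\ge 12$. Equality forces all other summands to vanish, i.e.\ every lattice point of $Q$ other than the origin lies on $\partial Q$, which means $i(Q)=1$. In dimension two a lattice polygon with a single interior lattice point is reflexive, so equality is equivalent to $Q$, and hence $P=Q^\ast$, being reflexive, as claimed.
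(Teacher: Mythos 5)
The paper itself offers no proof of this statement: it is imported verbatim as \cite[Corollary 4.5]{BS17}, with a pointer to a purely combinatorial proof in \cite{BHSdW23}. So your attempt can only be judged on its own terms, and on those terms it is an outline rather than a proof. The reductions you do carry out are correct: the reinterpretation of the weight as $1-\mu_Q(l)$ for $Q=P^\ast$, the observation that primitivity of the vertices forces every nonzero positively weighted lattice point into the relative interior of a unique triangle $T_E=\conv{0,E}$, the computation $\int_{T_E}(1-\mu_Q)^2\,dx=\tfrac16\area{T_E}$ giving the continuous shadow $2\area{Q^{\phantom{\ast}}}$ of the right-hand side, and the equality discussion (which correctly uses the two-dimensional coincidence, also invoked elsewhere in this paper, that a lattice polygon with a unique interior lattice point is reflexive).

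The genuine gap is exactly where you flag it. You reduce the theorem to the identity
\begin{align*}
12+12\sum_E\Bigl(\sigma_E-\tfrac16\area{T_E}\Bigr)=2\,\area{P},
\end{align*}
express $\sigma_E$ through sawtooth sums, and then only assert that Dedekind--Rademacher reciprocity will make the edge defects reassemble, cone by cone, into the central triangulation of $P$. That assertion is the entire content of the theorem: the left-hand defect is a sum of \emph{per-edge} quantities (generalized Dedekind sums depending on $a,b,d_E$), while $\area{P}$ is naturally a sum of \emph{per-vertex} quantities $\tfrac12|\det(n_E/d_E,\,n_{E'}/d_{E'})|$, and the passage from one indexing to the other (together with absorbing the explicit correction $-L_E/(12d_E)$ from the polynomial part and the constant $12$ from the origin) is precisely the generalized ``number 12'' mechanism. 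Without this bookkeeping being executed, nothing has been proved beyond the leading-order (continuous) approximation, which only yields $2\area{Q}$, not the $2\area{P}$ term. To close the gap you would either have to carry out the reciprocity computation in full (essentially reproducing the argument of \cite{BHSdW23}) or replace it by the stringy Libgober--Wood identity as in \cite{BS17}; as written, the main claim remains unestablished.
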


\begin{coro}
Let $P\subseteq\R^2$ be a pseudo-integral polygon with $\mathrm{int}(P)\cap \Z^2=\{(0,0)\}$ and $\denom{P}\leq 2$. Then
\begin{align*}
b(P)+b(P^\ast)=12+(i(P^\ast)-1).
\end{align*}
\end{coro}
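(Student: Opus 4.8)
The plan is to apply Theorem \ref{area_LDP_polygon} to $P$ and then evaluate the sum on its right-hand side completely explicitly, exploiting that the vertices of $P$ are half-integral. First I would check that $P$ is dual to an LDP polygon, so that the theorem is applicable: since $P$ is pseudo-integral with $\mathrm{int}(P)\cap\Z^2=\{(0,0)\}$ and $0\in\interior{P}$, Corollary \ref{pseudo_integral_1_interior_point} gives that $P^\ast$ is an LDP polygon. The same corollary (equivalently, Pick's formula \ref{Pick} for the pseudo-integral polygon $P$ with $i(P)=1$) yields $\area{P}=\frac{b(P)}{2}$, while Pick's formula for the lattice polygon $P^\ast$ gives $\area{P^\ast}=i(P^\ast)+\frac{b(P^\ast)}{2}-1$. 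These two area identities are what the final linear equation will be built from.

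The heart of the argument is to compute $\kappa_{P^\ast}(l)$ for every $l\in P^\ast\cap\Z^2$. For $l=0$ we have $\kappa_{P^\ast}(0)=0$, contributing $1$ to the sum; for $l\in\partial P^\ast$ the minimal $\lambda$ with $l\in\lambda P^\ast$ is $1$, so $\kappa_{P^\ast}(l)=-1$ and the contribution is $0$. For an interior lattice point $l\neq 0$ I would rewrite the gauge of $P^\ast$ by duality: unwinding $\lambda P^\ast=\{y\mid \langle y,x\rangle\geq -\lambda \ \forall x\in P\}$ shows that $l\in\lambda P^\ast$ if and only if $\langle l,x\rangle\geq -\lambda$ for all $x\in P$, whence $-\kappa_{P^\ast}(l)=\min\{\lambda\geq 0\mid l\in\lambda P^\ast\}=-\min_{x\in P}\langle l,x\rangle$. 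This minimum is attained at a vertex of $P$, and since $\denom{P}\leq 2$ every vertex lies in $\frac{1}{2}\Z^2$; as $l\in\Z^2$, the value $-\kappa_{P^\ast}(l)$ is therefore a half-integer. Because $l$ is interior and nonzero, this half-integer lies strictly in the open interval $(0,1)$, which forces $-\kappa_{P^\ast}(l)=\frac{1}{2}$, and hence $(\kappa_{P^\ast}(l)+1)^2=\frac{1}{4}$.

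Summing over the origin, the $b(P^\ast)$ boundary lattice points, and the $i(P^\ast)-1$ nonzero interior lattice points, I obtain $\sum_{l\in P^\ast\cap\Z^2}(\kappa_{P^\ast}(l)+1)^2 = 1+\frac{i(P^\ast)-1}{4}$. Plugging this into Theorem \ref{area_LDP_polygon} gives $2(\area{P}+\area{P^\ast})=12+3(i(P^\ast)-1)$. Substituting $\area{P}=\frac{b(P)}{2}$ and $\area{P^\ast}=i(P^\ast)+\frac{b(P^\ast)}{2}-1$ and clearing denominators, the $i(P^\ast)$-terms combine and the equation collapses to $b(P)+b(P^\ast)=12+(i(P^\ast)-1)$, which is the claim. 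As a sanity check, in the degenerate case $\denom{P}=1$ the polygon $P$ is reflexive, every vertex is integral so no nonzero interior lattice points of $P^\ast$ occur, $i(P^\ast)=1$, and the formula recovers the classical relation $b(P)+b(P^\ast)=12$.

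I expect the main obstacle to be precisely the identification $\kappa_{P^\ast}(l)=-\frac{1}{2}$ for the nonzero interior lattice points: the clean value rests on the interplay between the half-integrality of the vertices of $P$ (i.e.\ $\denom{P}\le 2$), which pins the gauge to $\frac{1}{2}\Z$, and the elementary fact that the gauge of an interior nonzero point lies strictly in $(0,1)$. Everything after that is bookkeeping with Pick's formula and the arithmetic of the Libgober--Wood identity; the duality rewriting of the gauge as $-\min_{x\in P}\langle l,x\rangle$ is the only nontrivial manipulation and should be stated carefully, since it is what transfers the half-integrality hypothesis on $P$ into a constraint on the lattice points of $P^\ast$.
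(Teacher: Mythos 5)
Your proposal is correct and follows essentially the same route as the paper: apply Theorem \ref{area_LDP_polygon}, evaluate $\kappa_{P^\ast}$ at the origin, the boundary lattice points, and the nonzero interior lattice points of $P^\ast$, and combine with Pick's formula for $P$ (giving $2\area{P}=b(P)$) and for the lattice polygon $P^\ast$. The only cosmetic difference is that the paper justifies $\kappa_{P^\ast}(l)=-\tfrac{1}{2}$ for nonzero interior lattice points by citing that $P^\ast$ has Gorenstein index at most $2$, whereas you derive the same value by unwinding the gauge as $-\min_{x\in P}\langle l,x\rangle$ and using the half-integrality of the vertices of $P$ — a correct and slightly more self-contained justification of the same fact.
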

\begin{proof}
Since $P$ is pseudo-integral with $i(P)=1$, we have $2\cdot \area{P}=b(P)$ by Pick's formula. Since $P^\ast$ is a lattice polygon by \ref{Hibi_corollary2}, we have by Pick's theorem $2\cdot\area{P^\ast}=2\cdot i(P^\ast)+b(P^\ast)-2$. So we get with \ref{area_LDP_polygon}
\begin{align*}
b(P)+b(P^\ast)=&2\cdot(\area{P}+\area{P^\ast})-2\cdot i(P^\ast)+2\\
=&-2\cdot i(P^\ast)+2+12 \sum_{l\in P^\ast\cap \Z^2} (\kappa_{P^*}(l)+1)^2.
\end{align*}
Note that $\kappa_{P^*}(l)=-1$ if $l\in \partial P^\ast$ and $\kappa_{P^*}((0,0))=0$. Moreover, since $P$ is half-integral, $P^\ast$ has Gorenstein index $i_g\leq 2$ and so every interior lattice point $l$ of $P^\ast$ different from $(0,0)$ has $\kappa_{P^*}(l)=-\frac{1}{2}$. So we get
\begin{align*}
b(P)+b(P^\ast)=&-2\cdot i(P^\ast)+2+12 \sum_{l\in P^\ast\cap \Z^2} (\kappa_{P^*}(l)+1)^2\\
=&12+\sum_{l\in \mathrm{int}(P^\ast)\cap \Z^2, l \neq (0,0)} \left(12\cdot (-\frac{1}{2}+1)^2-2\right)\\
=&12+(i(P^\ast)-1).
\end{align*}
\end{proof}

\section{A two parameter family of half-integral pseudo-integral polygons with $i\geq 1$ interior and $3\leq b\leq 2i+7$ boundary lattice points}

In the last sections we classified all pseudo-integral polygons of denominator $2$ with at most $1$ interior lattice point, and we gave an example of such a polygon for any given positive number of interior lattice points if $b(P)=2$.

In the following proposition we give an example of a pseudo-integral polygon of denominator $2$ with $i(P)\in \Z_{\geq 1}, 3\leq b(P)\leq 2i(P)+7$.

\begin{prop}
Let be $i, b\in \Z_{\geq 1}$ with $3 \leq b\leq 2i+7$. Then
\begin{align*}
P_{2,(i,b)}:=\conv{\left(0,\frac{3}{2}\right), (0,1), (2i+7-b,0), (2i+4,0), \left(2i+2,\frac{1}{2}\right)}\subseteq \R^2
\end{align*}
is a pseudo-integral polygon with $\denom{P}=2$, $i(P_{2,(i,b)})=i$ and $b(P_{2,(i,b)})=b$.
\end{prop}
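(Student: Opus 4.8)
The plan is to verify that $P_{2,(i,b)}$ is a half-integral polygon satisfying the criterion for pseudo-integrality from Corollary \ref{pseudo_crit}, namely that it obeys Pick's formula and has a lattice point on every edge, while confirming the prescribed counts $i(P_{2,(i,b)})=i$ and $b(P_{2,(i,b)})=b$. First I would observe that all five given vertices have coordinates in $\frac{1}{2}\Z$, so $\denom{P_{2,(i,b)}}\leq 2$; the presence of the genuinely half-integral vertices $(0,\frac{3}{2})$ and $(2i+2,\frac{1}{2})$ shows the denominator is exactly $2$. (One should note that for the extreme value $b=2i+7$ the vertex $(2i+7-b,0)=(0,0)$ collides with the edge through $(0,1)$, so that case degenerates to a quadrilateral; I would either treat it separately or check the formulas still hold at the boundary of the parameter range.)

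Next I would count the boundary lattice points. The vertices $(0,1)$, $(2i+7-b,0)$, and $(2i+4,0)$ are lattice points, and the bottom edge from $(2i+7-b,0)$ to $(2i+4,0)$ is horizontal of lattice length $(2i+4)-(2i+7-b)=b-3$, contributing $b-3$ additional interior-of-edge lattice points plus its two endpoints. The remaining edges — from $(2i+4,0)$ to $(2i+2,\frac{1}{2})$, from $(2i+2,\frac{1}{2})$ to $(0,\frac{3}{2})$, from $(0,\frac{3}{2})$ to $(0,1)$, and from $(0,1)$ to $(2i+7-b,0)$ — each need to be checked to contain a lattice point (guaranteeing the second condition of Corollary \ref{pseudo_crit}) and to contribute no \emph{further} boundary lattice points beyond those already counted. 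Tallying the lattice points on $\partial P_{2,(i,b)}$ should yield exactly $b$; I expect the lattice points to be precisely the $b-2$ points along the bottom edge (endpoints included), the point $(0,1)$, and one more lattice point somewhere on the upper/left boundary.

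For the area and interior count, I would compute $\area{P_{2,(i,b)}}$ directly via the shoelace formula on the five vertices. Pick's formula in the pseudo-integral setting reads $\area{P}=i(P)+\frac{b(P)}{2}-1$, so after obtaining the area I would solve for $i(P_{2,(i,b)})$ and check it equals $i$; equivalently, I can count interior lattice points directly (they should lie along horizontal lines $y=0$ strictly inside, and possibly $y=1$). Once Pick's formula is confirmed to hold \emph{and} every edge is shown to carry a lattice point, Corollary \ref{pseudo_crit} immediately gives that $P_{2,(i,b)}$ is pseudo-integral with denominator $2$.

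The main obstacle I anticipate is the edge-by-edge verification that each of the four non-horizontal edges contains a lattice point across the full parameter range $3\leq b\leq 2i+7$, together with the bookkeeping needed to ensure these shared lattice points are not double-counted. The slanted edges from $(2i+4,0)$ to $(2i+2,\tfrac12)$ and from $(2i+2,\tfrac12)$ to $(0,\tfrac32)$ pass through half-integral heights, so one must check carefully which integer points (if any) lie on them — e.g. the long upper edge has primitive direction determined by the displacement $(0,\tfrac32)-(2i+2,\tfrac12)=(-(2i+2),1)$, whose lattice points occur at integer steps and must be located precisely. Handling the degenerate endpoint $b=2i+7$ and confirming consistency with the already-established families (the triangles $T_{2,(i,2)}$ at the other extreme, and the one-interior-point classification) would complete the argument.
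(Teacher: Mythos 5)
Your plan follows the paper's proof essentially verbatim: check $\denom{P_{2,(i,b)}}=2$, count the boundary lattice points (the $b-2$ points on the bottom edge together with $(0,1)$ and one further point, namely $(i+1,1)$ on the long edge from $\left(2i+2,\frac{1}{2}\right)$ to $\left(0,\frac{3}{2}\right)$), count the interior lattice points (which are exactly $(1,1),\dotsc,(i,1)$, all on the line $y=1$ rather than $y=0$), verify Pick's formula from the area, and invoke Corollary \ref{pseudo_crit}. The only cosmetic differences are that the paper computes the area by subtracting two small triangles from $\conv{\left(0,\frac{3}{2}\right),(0,0),(3i+3,0)}$ instead of using the shoelace formula, and that it does not comment on the degeneration of $(2i+7-b,0)$ to $(0,0)$ when $b=2i+7$, which in any case affects neither the counts nor the area.
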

\begin{proof}
We have $\denom{P_{2,(i,b)}}=2$,
\begin{align*}
|\mathrm{int}(P_{2,(i,b)})\cap \Z^2|=|\{(k,1) \mid k\in \Z, 1\leq k\leq i\}|=i
\end{align*}
and
\begin{align*}
|\partial P_{2,(i,b)}\cap \Z^2|=&|\{(0,1),(i+1,1)\}\cup \{(k,0) \mid k\in \Z, 2i+7-b\leq k\leq 2i+4\}|\\=&2+b-2=b.
\end{align*}
Since 
\begin{align*}
\area{P_{2,(i,b)}}=&\area{\conv{\left(0,\frac{3}{2}\right), (0,0), (3i+3,0)}}\\
&\qquad -\area{\conv{\left(0,1\right), (0,0), (2i+7-b,0)}}\\
&\qquad -\area{\conv{\left(2i+2,\frac{1}{2}\right), (2i+4,0), (3i+3,0)}}\\
=&\frac{3(3i+3)}{4}-\frac{2i+7-b}{2}-\frac{i-1}{4}\\
=&i+\frac{b}{2}-1,
\end{align*}
the polygon $P_{2,(i,b)}$ satisfies Pick's formula and we have lattice points on every edge of $P_{2,(i,b)}$, so with \ref{pseudo_crit} we get also that $P_{2,(i,b)}$ is pseudo-integral. 
\end{proof}

\begin{figure}[H]
	\begin{tikzpicture}[x=0.6cm,y=0.6cm]
		\draw[step=2.0,black,thin,xshift=0cm,yshift=0cm] (-1,3.5) grid (17,-12.5);
		
		\draw [line width=1pt,color=black] (0,3)-- (0,2);
		\draw [line width=1pt,color=black] (0,2)-- (16,0);
		\draw [line width=1pt,color=black] (16,0)-- (12,1);
		\draw [line width=1pt,color=black] (12,1)-- (0,3);
		
		\draw [fill=black] (0,2) circle (2.5pt);
		\draw [fill=black] (2,2) circle (2.5pt);
		\draw [fill=black] (4,2) circle (2.5pt);
		\draw [fill=black] (6,2) circle (2.5pt);
		\draw [fill=black] (0,3) circle (2.5pt);
		\draw [fill=black] (12,1) circle (2.5pt);
		\draw [fill=black] (16,0) circle (2.5pt);
		\draw [fill=black] (16,0) circle (2.5pt);
		
		\draw[color=black] (15,1) node {$P_{2,(2,3)}$ };

		\draw [line width=1pt,color=black] (0,-1)-- (0,-2);
		\draw [line width=1pt,color=black] (0,-2)-- (14,-4);
		\draw [line width=1pt,color=black] (14,-4)-- (16,-4);
		\draw [line width=1pt,color=black] (16,-4)-- (12,-3);
		\draw [line width=1pt,color=black] (12,-3)-- (0,-1);
		
		\draw [fill=black] (0,-2) circle (2.5pt);
		\draw [fill=black] (2,-2) circle (2.5pt);
		\draw [fill=black] (4,-2) circle (2.5pt);
		\draw [fill=black] (6,-2) circle (2.5pt);
		\draw [fill=black] (0,-1) circle (2.5pt);
		\draw [fill=black] (12,-3) circle (2.5pt);
		\draw [fill=black] (14,-4) circle (2.5pt);
		\draw [fill=black] (16,-4) circle (2.5pt);
		
		\draw[color=black] (15,-3) node {$P_{2,(2,4)}$ };

		\draw [line width=1pt,color=black] (0,-5)-- (0,-6);
		\draw [line width=1pt,color=black] (0,-6)-- (2,-8);
		\draw [line width=1pt,color=black] (2,-8)-- (16,-8);
		\draw [line width=1pt,color=black] (16,-8)-- (12,-7);
		\draw [line width=1pt,color=black] (12,-7)-- (0,-5);
		
		\draw [fill=black] (0,-6) circle (2.5pt);
		\draw [fill=black] (2,-6) circle (2.5pt);
		\draw [fill=black] (4,-6) circle (2.5pt);
		\draw [fill=black] (6,-6) circle (2.5pt);
		\draw [fill=black] (0,-5) circle (2.5pt);
		\draw [fill=black] (12,-7) circle (2.5pt);
		\draw [fill=black] (2,-8) circle (2.5pt);
		\draw [fill=black] (4,-8) circle (2.5pt);
		\draw [fill=black] (6,-8) circle (2.5pt);
		\draw [fill=black] (8,-8) circle (2.5pt);
		\draw [fill=black] (10,-8) circle (2.5pt);
		\draw [fill=black] (12,-8) circle (2.5pt);
		\draw [fill=black] (14,-8) circle (2.5pt);
		\draw [fill=black] (16,-8) circle (2.5pt);
		
		\draw[color=black] (15,-7) node {$P_{2,(2,10)}$ };

		\draw [line width=1pt,color=black] (0,-9)-- (0,-10);
		\draw [line width=1pt,color=black] (0,-10)-- (0,-12);
		\draw [line width=1pt,color=black] (0,-12)-- (16,-12);
		\draw [line width=1pt,color=black] (16,-12)-- (12,-11);
		\draw [line width=1pt,color=black] (12,-11)-- (0,-9);
		
		\draw [fill=black] (0,-10) circle (2.5pt);
		\draw [fill=black] (2,-10) circle (2.5pt);
		\draw [fill=black] (4,-10) circle (2.5pt);
		\draw [fill=black] (6,-10) circle (2.5pt);
		\draw [fill=black] (0,-9) circle (2.5pt);
		\draw [fill=black] (12,-11) circle (2.5pt);
		\draw [fill=black] (0,-12) circle (2.5pt);
		\draw [fill=black] (2,-12) circle (2.5pt);
		\draw [fill=black] (4,-12) circle (2.5pt);
		\draw [fill=black] (6,-12) circle (2.5pt);
		\draw [fill=black] (8,-12) circle (2.5pt);
		\draw [fill=black] (10,-12) circle (2.5pt);
		\draw [fill=black] (12,-12) circle (2.5pt);
		\draw [fill=black] (14,-12) circle (2.5pt);
		\draw [fill=black] (16,-12) circle (2.5pt);
		
		\draw[color=black] (15,-11) node {$P_{2,(2,11)}$ };
		
	\end{tikzpicture}
	\caption{Some pseudo-integral half-integral polygons with $2$ interior lattice points. We have drawn polygons affine unimodular equivalent to $P_{2,(2,3)}$, $P_{2,(2,4)}$, $P_{2,(2,10)}$ and $P_{2,(2,11)}$. }
\end{figure}
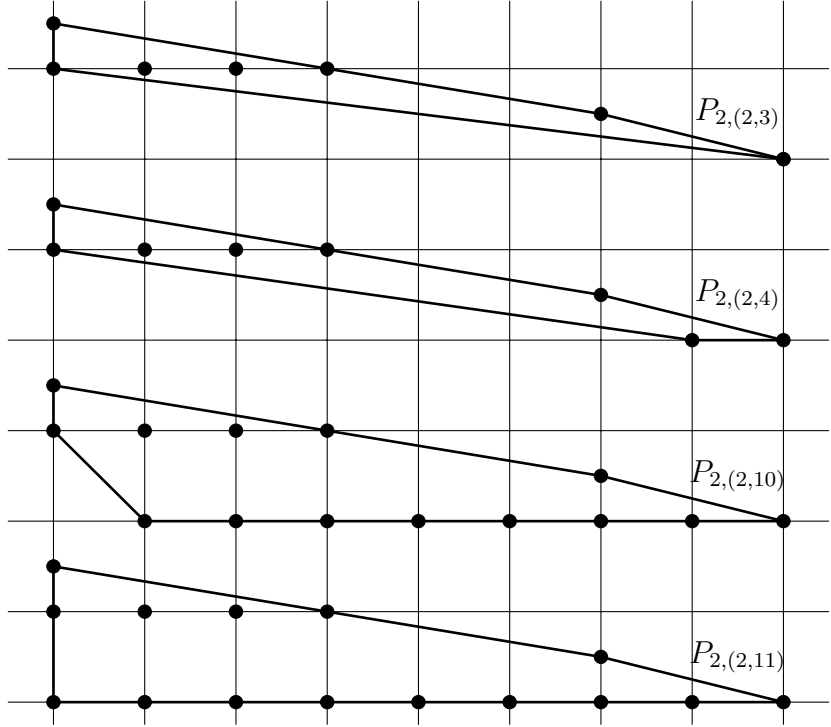

We have seen explicit examples with $b(P)=2i(P)+7$ and $i(P)>1$ in the last proposition, i.e. we have examples that do not obey Scott's inequality \ref{Scott_inequality}. It was an open question in \cite{MAM17} whether such pseudo-integral polygons exist. Can we classify all such polygons, and can we have even more boundary lattice points for pseudo-integral polygons of denominator $2$? The answer to the first question is 'yes', we will do the classification in the next section. For the second question, we will see below that this is not the case, starting with a result that gives us suitable coordinates for half-integral polygons of large area.

\begin{lemma}\label{half_integral_coordinates_for_big_areas}
Let $P\subseteq \R^2$ be a half-integral polygon with $\area{P}\geq 2i(P)+3$.\\
Then $P$ is affine unimodular equivalent to a polygon in $[0,\frac{3}{2}]\times \R$.
If $P\subseteq [0,2]\times \R$ with $P\cap \{0\}\times \R\neq \emptyset$ and $P\cap \{2\}\times \R\neq \emptyset$, then $\area{P}\leq 2i(P)+2$.
\end{lemma}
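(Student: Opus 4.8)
The statement contains two claims, and the plan is to treat them in the opposite order to how they are written. The second claim---if $P\subseteq[0,2]\times\R$ meets both lines $\{0\}\times\R$ and $\{2\}\times\R$, then $\area{P}\le 2i(P)+2$---is the substantive, self-contained estimate, and I would prove it first by a slicing argument. The first claim is a \emph{good coordinates} (flatness-type) statement, and I would reduce it to the second by passing to a lattice-width-minimizing direction.

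For the second claim I would write $\ell(x)$ for the length of the vertical slice $P\cap(\{x\}\times\R)$. Since $P$ is convex, $\ell$ is concave on the projection $[0,2]$ and $\area{P}=\int_0^2\ell(x)\,dx$. The argument rests on two elementary bounds. First, because $1$ is interior to $[0,2]$, concavity provides a supporting line $\ell(x)\le\ell(1)+m(x-1)$; integrating over $[0,2]$ and using $\int_0^2(x-1)\,dx=0$ gives $\area{P}\le 2\,\ell(1)$. Second, the slice $P\cap(\{1\}\times\R)$ is a segment of length $\ell(1)$ whose relative interior lies in $\interior{P}$ (again since $1$ is interior to the projection), so it contains at least $\ceil{\ell(1)}-1\ge\ell(1)-1$ lattice points, all interior to $P$; hence $\ell(1)\le i(P)+1$. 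Combining, $\area{P}\le 2\ell(1)\le 2i(P)+2$. Note this uses only convexity, neither half-integrality nor Pick's formula, which is appropriate since the lemma is stated for an arbitrary half-integral $P$.

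For the first claim I would assume $\area{P}\ge 2i(P)+3$ and argue that the lattice width is at most $\tfrac32$. The plan is to pick a primitive $u\in(\Z^2)^\ast$ minimizing the width $\max_{x\in P}\langle u,x\rangle-\min_{x\in P}\langle u,x\rangle$ and, after an affine unimodular transformation, take it to be the first coordinate, so that $P\subseteq[0,w]\times\R$ meets both bounding lines, where $w$ is the lattice width. As the vertices of $P$ lie in $\tfrac12\Z^2$, we have $w\in\tfrac12\Z$. If $w\le\tfrac32$, then $P\subseteq[0,\tfrac32]\times\R$, as desired. Otherwise $w\ge2$, and I would show this forces $\area{P}\le 2i(P)+2$, contradicting the hypothesis. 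Indeed, the interior integer lines are $x=1,\dots,m$ with $m=\ceil{w}-1\ge1$, and the slicing count from the second claim applied on each of them gives $i(P)\ge\sum_{j=1}^m(\ell(j)-1)$, while $\area{P}=\int_0^w\ell$. For $w=2$ this is exactly the second claim; for $w>2$ it should follow from the one-dimensional inequality $\int_0^w\ell\le 2\sum_{j=1}^m(\ell(j)-1)+2$ for concave $\ell\ge0$, whose extremal cases are the thin triangles $\conv{(0,0),(w,0),(0,N)}$.

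The hard part will be exactly this last one-dimensional estimate for $w>2$: a single supporting line at one node only yields the too-weak bound $\area{P}\le w\,\ell(w/2)$, and controlling $\int_0^w\ell$ by the integer-node values $\ell(j)$ with the \emph{exact} additive constant $2$ requires a careful concavity bookkeeping across several interior lattice lines. This is precisely the sort of sharp area estimate for half-integral polygons established in \cite{Boh23}, which I would invoke to supply the good coordinates; alternatively one proves it directly by reducing, via concavity, to the extremal triangular configurations above.
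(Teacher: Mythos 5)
Your slicing proof of the second claim is correct and is genuinely more than the paper provides: the paper's entire proof of this lemma is the citation ``This is a consequence of \cite[3.1, 3.3, 3.5]{Boh23}'', so an elementary argument (concavity of the slice-length function $\ell$, the supporting line at $x=1$ giving $\area{P}\le 2\ell(1)$, and the count $\ell(1)\le i(P)+1$ from the open middle slice) is a welcome addition. Since you too ultimately invoke \cite{Boh23} for the first claim, your overall route coincides with the paper's there.

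However, the \emph{direct} route you sketch for the first claim has a genuine flaw, and it is worth naming precisely because you present it as a viable alternative to citing \cite{Boh23}. Your intermediate target ``lattice width $\ge 2$ forces $\area{P}\le 2i(P)+2$'' is false for half-integral polygons. Take
\begin{align*}
P=\conv{\left(\tfrac{1}{2},0\right),(3,0),(0,3),\left(0,\tfrac{1}{2}\right)},
\end{align*}
i.e.\ $\conv{(0,0),(3,0),(0,3)}$ with one corner sliced off to make the denominator $2$. Then $\area{P}=\tfrac{35}{8}$, $i(P)=1$ (only $(1,1)$ survives), and the lattice width is $\tfrac{5}{2}$, attained in direction $(1,1)$; yet $\tfrac{35}{8}>4=2i(P)+2$. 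In the width-minimizing coordinates the slice profile is $\ell(u)=u$ on $[\tfrac12,3]$, and your proposed one-dimensional inequality $\int\ell\le 2\sum_{j}(\ell(j)-1)+2$ evaluates to $\tfrac{35}{8}\le 4$, which fails; the same failure occurs already for $\ell(x)=3-x$ on $[0,3]$ (the profile of $\conv{(0,0),(3,0),(0,3)}$), which is one of the ``thin triangles'' you name as extremal. The problem is that replacing the actual lattice-point count $\ceil{\ell(j)}-1$ (which depends on the position of the slice, not just its length) by $\ell(j)-1$ loses too much once there are two or more interior lattice lines, and no purely one-dimensional concavity statement in the values $\ell(j)$ can close the gap. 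What is actually true, and what the first claim needs, is only the weaker contrapositive ``width $\ge 2$ implies $\area{P}<2i(P)+3$''; establishing that for widths $\ge\tfrac52$ requires the finer two-dimensional analysis of \cite{Boh23}, so your fallback to that reference is not optional but essential. With that understood, your write-up is correct, but the parenthetical claim that one can ``alternatively prove it directly'' along the indicated lines should be dropped or substantially repaired.
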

\begin{proof}
This is a consequence of \cite[3.1, 3.3, 3.5]{Boh23}.
\end{proof}

\begin{thm}\label{upper_bound_boundary_points}
Let $P\subseteq \R^2$ be a half-integral pseudo-integral polygon with $i(P)>0$ and $b(P)\geq 2i(P)+7$. Then $b(P)= 2i(P)+7$.
\end{thm}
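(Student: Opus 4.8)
The plan is to argue by contradiction: suppose $b(P)\geq 2i(P)+8$. Write $i:=i(P)$. Since $P$ is pseudo-integral it obeys Pick's formula by \ref{pseudo_crit_general}, so $\area{P}=i+\frac{b(P)}{2}-1\geq 2i+3$. Hence Lemma \ref{half_integral_coordinates_for_big_areas} applies, and after an affine unimodular transformation I may assume $P\subseteq[0,\frac32]\times\R$. The whole argument then takes place inside this strip of width $\frac32$.

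First I would record the structure forced by the strip. Every lattice point of $P$ has integral first coordinate in $[0,\frac32]$, hence lies on $x=0$ or $x=1$, and since $\interior{P}\subseteq(0,\frac32)\times\R$ all interior lattice points lie on $x=1$. Writing $w(x)$ for the length of the vertical slice $P\cap(\{x\}\times\R)$, the open slice at $x=1$ contains exactly $i$ integers, whence $w(1)\leq i+1$. The line $x=1$ carries at most two boundary lattice points (the endpoints of its slice), so $x=0$ carries at least $b(P)-2\geq 2i+6$ boundary lattice points, which are consecutive by convexity; thus $P$ has a left edge on $x=0$ of length $w(0)\geq 2i+5$. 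Because $P$ is half-integral its vertices have first coordinates in $\{0,\frac12,1,\frac32\}$, so $w$ is concave and piecewise linear with breakpoints only at $\frac12,1$, giving $\area{P}=\frac14\bigl(w(0)+2w(\tfrac12)+2w(1)+w(\tfrac32)\bigr)$.

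Two regimes arise by the integer hull. If $\dim\inthull{P}=2$ and $i(\inthull{P})\geq 1$, then Scott's inequality \ref{Scott_inequality} for the lattice polygon $\inthull{P}$ combined with \ref{int_hull} gives $b(P)=b(\inthull{P})-(i-i(\inthull{P}))\leq 2i(\inthull{P})+6-(i-i(\inthull{P}))\leq 2i+3$ (the exceptional triangle of \ref{Scott_inequality} is excluded by the same count), contradicting $b(P)\geq 2i+8$. But in the strip one always has $\inthull{P}\subseteq[0,1]\times\R$, so $i(\inthull{P})=0$ automatically; by \ref{hollow_lattice_polygons} the integer hull is then a segment, a copy of $\conv{(0,0),(2,0),(0,2)}$ (which forces $b(P)=6-i$, impossible), or a polygon of lattice height $1$. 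Thus the genuine case is the strip configuration above. Here concavity and $w(\tfrac32)\geq0$ yield the crude bound $w(1)\geq\frac13 w(0)$, so $\frac{2i+5}{3}\leq w(1)\leq i+1$, which already forces $i\geq 2$ and disposes of $i=1$.

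The main obstacle is closing the remaining constant-size gap for $i\geq2$: the convex-geometric bound $w(1)\geq\frac13 w(0)$ is not sharp — it is attained by a real triangle with apex on $x=\frac32$, which is \emph{not} pseudo-integral (it violates Pick's formula) — whereas the true extremal polygons $P_{2,(i,2i+7)}$ satisfy the far tighter relation $w(0)\approx 2\,w(1)$. To recover the factor $2$ I would feed in two further constraints at once: that Pick's formula holds \emph{exactly}, so that $4\area{P}=4i+2b(P)-4$ is pinned to the lattice counts rather than merely bounded; and that the finitely many vertices, all lying on the four lines $x\in\{0,\frac12,1,\frac32\}$ at half-integral heights, cannot be placed so that the slice at $x=1$ contains only $i$ interior lattice points while $w(0)\geq 2i+6$. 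Concretely I expect this to reduce to a short finite check of admissible vertex configurations in the strip, showing that the slopes determined by the half-integral vertices force the slice at $x=1$ either to acquire another interior lattice point or to break Pick's formula; either way this contradicts $b(P)\geq 2i+8$ and leaves $b(P)=2i+7$ as the only possibility.
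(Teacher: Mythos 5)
Your reduction is sound as far as it goes: the contradiction hypothesis $b(P)\geq 2i+8$, Pick's formula giving $\area{P}\geq 2i+3$, and Lemma \ref{half_integral_coordinates_for_big_areas} placing $P$ in the strip $[0,\frac{3}{2}]\times\R$ is exactly how the paper opens, and your intermediate observations (interior lattice points confined to $x=1$, $w(1)\leq i+1$, $w(0)\geq 2i+5$, the disposal of $i=1$) are correct. But the argument stops precisely where the work is. For $i\geq 2$ you concede that $w(1)\geq\frac{1}{3}w(0)$ is too weak and only \emph{expect} that ``a short finite check of admissible vertex configurations'' will close the constant-size gap. That check is never performed, and it is not obviously finite: the vertices do lie on four vertical lines, but their heights are unbounded parameters growing with $i$, so what you propose is a genuine case analysis with parameters, not a verification. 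As written, the key inequality is missing, and the digression through $\inthull{P}$ and Scott's inequality, while correct, does not supply it.

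The paper closes the gap with a different count, which is worth internalizing because it produces exactly the ``factor $2$'' you were hunting for: it counts interior lattice points of the dilate $2P$ rather than refining the width function. After shearing so that $P\cap(\{1\}\times\R)\subseteq\{1\}\times[0,i+1]$ and translating so that $(\frac{3}{2},0)$ is a vertex, convexity traps $\interior{2P}$ inside the trapezoid $\conv{(1,1),(2,1),(2,2i+1),(1,4i+3)}$, which contains only $(4i+3)+(2i+1)=6i+4$ lattice points. On the other hand, pseudo-integrality pins the count exactly via Corollary \ref{int_bound_points}: $i(2P)=4i(P)+b(P)-3>6i+4$ as soon as $b(P)>2i+7$. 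These two facts contradict each other in one line. This is precisely the combination you anticipated (Pick's formula used as an equality together with the lattice-point restriction on the slice at $x=1$), but executed on $2P$, where both sides become integer counts and no slope analysis is needed. Replacing your final paragraph with this dilation argument would complete the proof.
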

\begin{proof}
Suppose $P$ is a half-integral pseudo-integral polygon with $b(P)>2i(P)+7$. Then $\area{P}\geq 2i(P)+3$ by Pick's formula and so $P$ is by \ref{half_integral_coordinates_for_big_areas} affine unimodular equivalent to a half-integral polytope $Q$ in $[0,\frac{3}{2}]\times \R$. Since $P$ has $i(P)$ interior lattice points, we can also assume, with the help of a suitable translation and shearing, that 
\begin{align*}
Q\cap \{1\}\times \R\subseteq \{1\}\times [0,i(P)+1]
\end{align*}
and $(\frac{3}{2},0)$ is a vertex of $Q$. Since $Q$ is pseudo-integral, we have 
\begin{align*}
i(2Q)=4i(P)+b(P)-3
\end{align*}
by \ref{int_bound_points} and therefore $i(2Q)>6i(P)+4$. But since $Q\cap \{1\}\times \R\subseteq \{1\}\times [0,i(P)+1]$ and $(\frac{3}{2},0)$ is a vertex of $Q$, we have
\begin{align*}
\mathrm{int}(2Q)\subseteq \conv{(1,1),(2,1),(2,2i(P)+1),(1,4i(P)+3)}
\end{align*}
and so $i(2Q)\leq 2i(P)+1+4i(P)+3=6i(P)+4$, a contradiction. 
\end{proof}

We summarize our results from the last sections on the possible values of the pair $(i(P),b(P))$ in the following theorem. With \ref{pseudo_crit_general} this theorem gives the proof of our main theorem \ref{main_result}.

\begin{thm}
Let $P\subseteq \R^2$ be a half-integral pseudo-integral polygon that is not a lattice polygon. Then 
\begin{align*}
(i(P),b(P))\in \{(0,3)\} \cup \{(i,b) \mid i, b\in \Z_{\geq 1}, 2\leq b\leq 2i+7\}.
\end{align*}
Conversely, there is a half-integral pseudo-integral polygon $P\subseteq \R^2$, which is not a lattice polygon, with $(i(P),b(P))=(i,b)$ for every 
\begin{align*}
(i,b)\in \{(0,3)\} \cup \{(i,b) \mid i, b\in \Z_{\geq 1}, 2\leq b\leq 2i+7\}.
\end{align*}
\end{thm}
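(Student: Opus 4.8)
The plan is to prove this final theorem by assembling the pieces already established in the preceding sections, treating the two directions (necessity and sufficiency of the constraint on $(i(P),b(P))$) separately. For the forward direction, I would argue that the only admissible pairs are $(0,3)$ together with all $(i,b)$ satisfying $i,b\in \Z_{\geq 1}$ and $2\leq b\leq 2i+7$. The lower bound $b\geq 2$ on the boundary lattice points follows because a half-integral pseudo-integral polygon has a lattice point on every edge by \ref{pseudo_crit}, hence at least two distinct boundary lattice points (the polygon has at least three edges, and two non-adjacent edges already force two distinct points; more carefully, since $\denom{P}=2>1$, the polygon is not a lattice polygon, so it cannot have a single boundary lattice point). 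The case $i(P)=0$ is pinned down completely by \ref{pseudo-integral_without_interior_lattice points}: the unique non-lattice example is $T_{2,(0,3)}$, giving exactly the pair $(0,3)$. The upper bound $b(P)\leq 2i(P)+7$ for $i(P)>0$ is precisely \ref{upper_bound_boundary_points}.

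For the converse (realizability) direction, I would exhibit an explicit non-lattice half-integral pseudo-integral polygon for each admissible pair. The pair $(0,3)$ is realized by $T_{2,(0,3)}$ from \ref{pseudo-integral_without_interior_lattice points}. For pairs with $i\geq 1$ and $b=2$, the family $T_{2,(i,2)}$ constructed earlier provides examples. For pairs with $i\geq 1$ and $3\leq b\leq 2i+7$, the two-parameter family $P_{2,(i,b)}$ from the preceding proposition realizes every such pair. Thus every admissible pair is attained, and since each of these polygons has $\denom{P}=2$, none is a lattice polygon.

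The main subtlety, and the step I would treat most carefully, is confirming that the enumerated families genuinely exhaust all admissible pairs with no gaps or overlaps in coverage: one must check that $\{(0,3)\}$ together with $\{(i,2)\mid i\geq 1\}$ and $\{(i,b)\mid i\geq 1,\ 3\leq b\leq 2i+7\}$ covers exactly $\{(0,3)\}\cup\{(i,b)\mid i,b\geq 1,\ 2\leq b\leq 2i+7\}$. This is a routine but necessary bookkeeping check: the second and third families together give precisely all $(i,b)$ with $i\geq 1$ and $2\leq b\leq 2i+7$, since $b=2\leq 2i+7$ automatically holds for $i\geq 1$. Finally, I would remark that the forward direction already invokes \ref{upper_bound_boundary_points}, whose proof rests on the area bounds of \ref{half_integral_coordinates_for_big_areas}, so this is the genuinely hard analytic input; everything else is either an explicit construction or a direct appeal to Pick's formula via \ref{pseudo_crit}. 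The statement then feeds, through \ref{pseudo_crit_general} and the formula $\ehr{P}{t}=\area{P}t^2+\frac{b(P)}{2}t+1$, directly into the main classification \ref{main_result}.
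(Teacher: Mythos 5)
Your proposal is correct and matches the paper's intent exactly: the theorem is stated there as a summary with no separate proof, and the intended argument is precisely your assembly of \ref{pseudo_crit} (lattice point on every edge, hence $b\geq 2$), \ref{pseudo-integral_without_interior_lattice points} for the case $i(P)=0$, \ref{upper_bound_boundary_points} for the bound $b\leq 2i+7$, and the explicit families $T_{2,(0,3)}$, $T_{2,(i,2)}$ and $P_{2,(i,b)}$ for realizability. Your bookkeeping check that these families cover all admissible pairs is the only point requiring care, and you handle it correctly.
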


\section{Classification of half-integral pseudo-integral polygons with $i\geq 1$ interior and $b = 2i+7$ boundary lattice points}

For the classification, we need the following lemma about the integer hull of pseudo-integral polygons with many boundary lattice points.

\begin{lemma}\label{hollow_integer_hull}
Let $P\subseteq \R^2$ be a rational polygon with $\denom{P}>1$, $i(P)\geq 1$ and $b(P)\geq 2i(P)+7$. Then $\inthull{P}$ has no interior lattice points.
\end{lemma}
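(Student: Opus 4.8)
The plan is to argue by contradiction: assume $\inthull{P}$ has an interior lattice point, so that $i(\inthull{P})\geq 1$, and then derive a contradiction from Scott's inequality \ref{Scott_inequality} applied to the lattice polygon $\inthull{P}$. First I would record two elementary monotonicity facts that hold because $\inthull{P}\subseteq P$ while $\inthull{P}$ contains exactly the lattice points of $P$. Since $\interior{\inthull{P}}\subseteq \interior{P}$, every interior lattice point of $\inthull{P}$ is an interior lattice point of $P$, giving $i(\inthull{P})\leq i(P)$. Conversely, a lattice point on $\partial P$ lies in $\inthull{P}$ but cannot lie in $\interior{\inthull{P}}$ (otherwise it would lie in $\interior{P}$), so it lies on $\partial \inthull{P}$; hence $b(P)\leq b(\inthull{P})$. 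Note also that $i(\inthull{P})\geq 1$ already forces $\inthull{P}$ to be two-dimensional, so Scott's inequality applies.

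With these in hand, the generic case is immediate. If $\inthull{P}$ is not the exceptional triangle of \ref{Scott_inequality}, then Scott gives $b(\inthull{P})\leq 2i(\inthull{P})+6$, and combining with the two monotonicity facts and the hypothesis $b(P)\geq 2i(P)+7$ yields
$$2i(P)+7\leq b(P)\leq b(\inthull{P})\leq 2i(\inthull{P})+6\leq 2i(P)+6,$$
a contradiction.

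It remains to rule out the exceptional case, which I expect to be the main obstacle. Here $i(\inthull{P})=1$, $b(\inthull{P})=9$, and $\inthull{P}$ is affine unimodular equivalent to $T:=\conv{(0,0),(3,0),(0,3)}$. Since $\inthull{P}$ and $P$ share the same lattice points, $i(P)+b(P)=i(\inthull{P})+b(\inthull{P})=10$, and together with $b(P)\geq 2i(P)+7$ and $i(P)\geq 1$ this forces $i(P)=1$ and $b(P)=9$. Thus $P$ has the same ten lattice points as $T$, with $(1,1)\in\interior{T}\subseteq\interior{P}$ its unique interior lattice point and the remaining nine points of $T$ all lying on $\partial P$. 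The key observation is that each edge line of $T$ carries at least three of these boundary lattice points of $P$ with one strictly between two others (for instance $(1,0)$ between $(0,0)$ and $(3,0)$ on the bottom edge). Since such a midpoint lies in the relative interior of a chord of the convex set $P$ and yet lies on $\partial P$, the supporting line of $P$ at that point must contain the whole chord, i.e. it is the edge line itself. Applying this to the three edge lines $y=0$, $x=0$ and $x+y=3$ shows $P\subseteq T$; combined with $\inthull{P}=T\subseteq P$ this gives $P=T$, a lattice polygon, contradicting $\denom{P}>1$ and completing the proof.
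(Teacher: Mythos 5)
Your proof is correct. It follows the same basic strategy as the paper's --- apply Scott's inequality to $\inthull{P}$ and deal separately with the exceptional triangle --- but it is more self-contained in two respects. First, for the main case the paper simply cites \cite[3.1]{MAM17}, whereas you derive the contradiction directly from the two monotonicity inequalities $i(\inthull{P})\leq i(P)$ and $b(P)\leq b(\inthull{P})$ together with $b(\inthull{P})\leq 2i(\inthull{P})+6$; both inequalities are justified correctly (full-dimensionality of $\inthull{P}$ follows from $i(\inthull{P})\geq 1$, so interiors are nested). Second, in the exceptional case the paper disposes of the possibility $P\supsetneq \inthull{P}\cong\conv{(0,0),(3,0),(0,3)}$ with the one-line remark that ``any larger $P$ would have more interior lattice points,'' while your supporting-line argument at the edge-interior lattice points $(1,0)$, $(0,1)$, $(1,2)$ gives a clean, explicit proof that $P\subseteq T$ and hence $P=T$. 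The counting step $i(P)+b(P)=l(P)=l(\inthull{P})=10$ forcing $(i(P),b(P))=(1,9)$ is also correct. What your version buys is independence from the external reference; what the paper's buys is brevity. No gaps.
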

\begin{proof}
The case $i(P)>1, b(P)\geq 2i(P)+7$ follows from \cite[3.1]{MAM17}. Suppose $i(P)=1$, $b(P)\geq 9$ and $i(\inthull{P})>0$, then we have $b(\inthull{P})=b(P)=9$ and $\inthull{P}\cong \conv{(0,3),(0,0),(3,0)}$ and so we get $P=\inthull{P}$, since any larger $P$ would have more interior lattice points. So $P$ has denominator $1$, a contradiction.
\end{proof}

\begin{thm}
$P\subseteq \R^2$ is a pseudo-integral polygon with $\denom{P}=2, i(P)=i$ and $b(P)=2i+7$ if and only if either
\begin{align*}
P\cong P_{2,(i,2i+7),a}:=\conv{\left(0,\frac{3}{2}\right), \left(0,\frac{1}{2}\right), (a,0), (a+2i+4,0), \left(2i+2,\frac{1}{2}\right)}
\end{align*}
for an $a\in \Z, 0\leq a\leq \frac{i-1}{2}$ or 
\begin{align*}
i>1, P\cong P_{2,(i,2i+7),s}:= \conv{\left(0,\frac{3}{2}\right), (0,1), \left(\frac{1}{2},0\right), (2i+5,0), \left(2i+2,\frac{1}{2}\right)} 
\end{align*}
or 
\begin{align*}
i=3, P\cong P_{2,(3,13),s_3}:=\conv{\left(0,\frac{3}{2}\right), (0,1), \left(\frac{1}{2},0\right), \left(\frac{23}{2},0\right), (4,1)}. 
\end{align*}
\end{thm}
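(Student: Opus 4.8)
The plan is to recover $P$ from its integer hull, exploiting that the extremal condition $b(P)=2i+7$ fixes the area exactly. Since $P$ is pseudo-integral it obeys Pick's formula by \ref{pseudo_crit}, so $\area{P}=i+\frac{2i+7}{2}-1=2i+\frac{5}{2}$. First I would verify $\dim(\inthull{P})=2$: if all lattice points of $P$ lay on a line $\ell$, the chord $P\cap\ell$ would contain all $3i+7\geq 10$ of them; were this chord not an edge, its relative interior (hence at least $3i+5$ lattice points) would lie in $\interior{P}$, contradicting $i(P)=i$, so $P\cap\ell$ is an edge, and since every other edge carries a lattice point of $\ell$ by \ref{pseudo_crit}, $P$ must be the triangle with base $\ell$, which has no interior lattice point---again absurd. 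Hence $\dim(\inthull{P})=2$, Lemma \ref{hollow_integer_hull} gives $i(\inthull{P})=0$, and Proposition \ref{int_hull} yields $b(\inthull{P})=3i+7$, $\area{\inthull{P}}=\frac{3i+5}{2}$, and total added area $\area{P}-\area{\inthull{P}}=\frac{i}{2}$. By the classification \ref{hollow_lattice_polygons}, and since the only hollow triangle $\conv{(0,0),(2,0),(0,2)}$ has $b=6\neq 3i+7$, up to equivalence $\inthull{P}=\conv{(1,0),(d,0),(u,1),(1,1)}$ with $u+d=3i+7$ and $1\leq u\leq d$, sitting in the strip $\{0\leq y\leq 1\}$.

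Next I would regard $P$ as a half-integral extension of this trapezoid that adds no new lattice point, carries a lattice point on every edge, obeys Pick's formula, and turns exactly $i$ of the $3i+7$ boundary lattice points of $\inthull{P}$ into interior points. Because no new lattice point may appear, every vertex of $P$ outside $\inthull{P}$ has $y$-coordinate in $[-\frac12,\frac32]$, and the extension consists of convex bulges over the edges. The decisive inputs are the two rigid equalities---added area exactly $\frac{i}{2}$ and exactly $i$ conversions---together with half-integrality and the lattice-point-on-every-edge condition: a convex cap of height $\frac12$ over a horizontal edge converts only the lattice points strictly under it, at an area cost linear in their number. Moreover, since $\area{P}=2i+\frac{5}{2}>2i+2$, the second part of Lemma \ref{half_integral_coordinates_for_big_areas} forbids $P$ from meeting both walls of a width-$2$ vertical strip, keeping $P$ thin; together with $u\leq d$ this should rule out simultaneous bulging over opposite edges. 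I expect these constraints to force all conversions onto the top edge, to pin the trapezoid to $u=i+2,\ d=2i+5$, and to leave the bottom edge on $\{y=0\}$.

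Finally I would enumerate the admissible extensions. The upper boundary of $P$ is then a convex half-integral chain rising from the short left edge to height $\frac32$ and descending to the half-integral vertex $(2i+2,\frac12)$ on the right; the lattice-point-on-every-edge condition and the exact added area $\frac{i}{2}$ fix its two slopes and the covered block of $i$ top points, while its horizontal placement relative to the bottom edge is the origin of the shift parameter $a$, whose range $0\leq a\leq\frac{i-1}{2}$ is cut from the full range by the reflection symmetry of the configuration. The behaviour at the short left edge is the subtle point and produces the split: either the left side of $P$ stays vertical, giving the family $P_{2,(i,2i+7),a}$, or the bottom-left corner bulges to the half-integral vertex $(\frac12,0)$, giving the symmetric family $P_{2,(i,2i+7),s}$ (possible only for $i>1$), with a single degeneration at $i=3$, where an extra lattice vertex $(4,1)$ appears, yielding the sporadic polygon $P_{2,(3,13),s_3}$. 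For the converse direction I would simply check that each listed polygon is half-integral, obeys Pick's formula, and has a lattice point on every edge, hence is pseudo-integral by \ref{pseudo_crit} with the asserted values of $i(P)$ and $b(P)$.

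\textbf{The main obstacle} is this last enumeration: once the integer hull and the area budget are in hand, everything reduces to a case-heavy analysis of the admissible convex half-integral caps and of the two short edges of the trapezoid, where one must separate the two infinite families, isolate the exceptional polygon, exclude all mixed top/bottom configurations, and fix the parameter ranges so that each affine unimodular class is represented exactly once.
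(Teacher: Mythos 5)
Your reduction to the integer hull is sound and close in spirit to the paper's: the paper likewise places $\inthull{P}$ via Lemma \ref{hollow_integer_hull} and Remark \ref{hollow_lattice_polygons} into the strip $\R\times[0,1]$, confines $P$ to $\R\times[0,\tfrac32]$ using Lemma \ref{half_integral_coordinates_for_big_areas}, and your preliminary computations ($\area{P}=2i+\tfrac52$, $b(\inthull{P})=3i+7$, added area $\tfrac{i}{2}$) are all correct. But there is a genuine gap: the entire enumeration step, which is the actual content of the theorem, is not carried out. The claims that every vertex of $P$ has $y$-coordinate in $[-\tfrac12,\tfrac32]$, that all $i$ conversions land on the top edge, that the trapezoid is pinned to $u=i+2$, $d=2i+5$, and that exactly the two families plus the single sporadic polygon $P_{2,(3,13),s_3}$ emerge, are all stated as expectations (``I expect\dots'', ``should rule out\dots'') rather than proved; you acknowledge this yourself as ``the main obstacle.'' These are not routine: for instance, ruling out a half-integral vertex at height $2$ or $\tfrac52$ (which is not a lattice point, so ``no new lattice point may appear'' does not immediately exclude it) requires the quantitative argument the paper makes using $i(P)>6$ and the length of $P\cap(\R\times\{1\})$, and you never perform the reduction to $i>6$ nor treat the small cases $i\le 6$, which the paper delegates to the explicit computations in Classification \ref{classification_small_number_interior_points}.

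For comparison, the paper's mechanism for the enumeration is worth adopting: rather than bookkeeping area budgets and ``conversions,'' it first shows $P\subseteq\conv{(0,\tfrac32),(0,0),(3i+3,0)}$ and then exploits the exact counts $i(2P)=6i+4$ and $b(2P)=2b(P)=4i+14$ from Corollary \ref{int_bound_points}. Since $6i+4$ equals the number of half-integral interior points of the ambient triangle, every candidate half-integral interior point must actually lie in $P$, which forces $(0,1),(i+1,1)\in P$; walking counterclockwise along $\partial P$ and using that consecutive non-integral half-integral boundary points are forbidden then determines each successive vertex, with the case split on whether $(0,\tfrac12)$ and $(2i+2,\tfrac12)$ belong to $P$ producing exactly the family $P_{2,(i,2i+7),a}$, the family $P_{2,(i,2i+7),s}$, and the degenerate solution at $i=3$. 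Until you supply an argument of comparable precision for your ``convex caps'' picture, the proposal establishes the setup but not the classification.
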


\begin{proof}
Let $P\subseteq \R^2$ be a pseudo-integral polygon with $\denom{P}=2, i(P)=i$ and $b(P)=2i(P)+7$. Due to explicit computations in \ref{classification_small_number_interior_points} we can assume $i(P)>6$.

As a first step, we show that we can choose suitable coordinates so that we can realize $P$ as a subpolygon of $\conv{(0,\frac{3}{2}), (0,0), (3i+3)}$. By \ref{hollow_integer_hull} we know that $\inthull{P}$ has no interior lattice points. Since there are at least $7$ boundary lattice points, we can assume by \ref{hollow_lattice_polygons} after a suitable affine unimodular transformation that we have $\inthull{P}\subseteq \R \times [0,1]$. Moreover, since $b(P)>4$, we cannot have points of $P$ in both $\R \times (-\infty,0)$ and $\R \times (1,\infty)$, and so, after a suitable affine unimodular transformation, we can also assume  that $P\subseteq \R \times [0,\infty)$, and so we get in particular $\interior{P}\cap\Z^2\in \Z \times \{1\}$. Since $i(P)>6$ we have $\text{length}(P\cap \R \times \{1\})>5$ and so a point of $P$ in $(\frac{5}{2},\infty)$ would imply interior lattice points of $P$ on $\{2\}\times \R$. So we have $P\subseteq \R \times [0,2]$. We can even go further to $P\subseteq \R \times [0,\frac{3}{2}]$, since a vertex on $\R\times \{0\}$ and on $\R\times \{2\}$ would imply that $\area{P}\leq 2i+2$ by \ref{half_integral_coordinates_for_big_areas}, which contradicts $\area{P}=i(P)+\frac{b(P)}{2}-1=2i+\frac{5}{2}$. Since $P$ is half-integral pseudo-integral, we have lattice points on every edge of $P$ by \ref{pseudo_crit} and so we have only one vertex of $P$ on $\R \times \{\frac{3}{2}\}$. After a suitable affine unimodular transformation we have that this vertex is $(0,\frac{3}{2})$ and $P\cap \R\times\{1\}\subseteq [0,i+1]\times\{1\}$, so we can assume all in all $P\subseteq \conv{(0,\frac{3}{2}), (0,0), (3i+3,0)}$.

\begin{figure}[H]
	\begin{tikzpicture}[x=0.6cm,y=0.6cm]
		\draw[step=2.0,black,thin,xshift=0cm,yshift=0cm] (-1,3.9) grid (23.9,-13);
		
		\draw [line width=1pt,color=black] (0,3)-- (0,0);
		\draw [line width=1pt,color=black] (0,0)-- (20,0);
		\draw [line width=1pt,color=black] (20,0)-- (16,1);
		\draw [line width=1pt,color=black] (16,1)-- (0,3);
		
		\draw [fill=black] (0,3) circle (2.5pt);
		\draw [fill=black] (16,1) circle (2.5pt);
		\draw [fill=black] (0,0) circle (2.5pt);
		\draw [fill=black] (2,0) circle (2.5pt);
		\draw [fill=black] (4,0) circle (2.5pt);
		\draw [fill=black] (6,0) circle (2.5pt);
		\draw [fill=black] (8,0) circle (2.5pt);
		\draw [fill=black] (10,0) circle (2.5pt);
		\draw [fill=black] (12,0) circle (2.5pt);
		\draw [fill=black] (14,0) circle (2.5pt);
		\draw [fill=black] (16,0) circle (2.5pt);
		\draw [fill=black] (18,0) circle (2.5pt);
		\draw [fill=black] (20,0) circle (2.5pt);
		\draw [fill=black] (0,2) circle (2.5pt);
		\draw [fill=black] (2,2) circle (2.5pt);
		\draw [fill=black] (4,2) circle (2.5pt);
		\draw [fill=black] (6,2) circle (2.5pt);
		\draw [fill=black] (8,2) circle (2.5pt);
		
		\draw[color=black] (19,1.4) node[fill=white] {$P_{2,3,13,a}$ };
		\draw[color=black] (22,1.4) node[fill=white] {$a=0$ };

		\draw [line width=1pt,color=black] (0,-1)-- (0,-3);
		\draw [line width=1pt,color=black] (0,-3)-- (2,-4);
		\draw [line width=1pt,color=black] (2,-4)-- (22,-4);
		\draw [line width=1pt,color=black] (22,-4)-- (16,-3);
		\draw [line width=1pt,color=black] (16,-3)-- (0,-1);
		
		\draw [fill=black] (0,-1) circle (2.5pt);
		\draw [fill=black] (16,-3) circle (2.5pt);
		\draw [fill=black] (0,-3) circle (2.5pt);
		\draw [fill=black] (22,-4) circle (2.5pt);
		\draw [fill=black] (2,-4) circle (2.5pt);
		\draw [fill=black] (4,-4) circle (2.5pt);
		\draw [fill=black] (6,-4) circle (2.5pt);
		\draw [fill=black] (8,-4) circle (2.5pt);
		\draw [fill=black] (10,-4) circle (2.5pt);
		\draw [fill=black] (12,-4) circle (2.5pt);
		\draw [fill=black] (14,-4) circle (2.5pt);
		\draw [fill=black] (16,-4) circle (2.5pt);
		\draw [fill=black] (18,-4) circle (2.5pt);
		\draw [fill=black] (20,-4) circle (2.5pt);
		\draw [fill=black] (0,-2) circle (2.5pt);
		\draw [fill=black] (2,-2) circle (2.5pt);
		\draw [fill=black] (4,-2) circle (2.5pt);
		\draw [fill=black] (6,-2) circle (2.5pt);
		\draw [fill=black] (8,-2) circle (2.5pt);
		
		\draw[color=black] (19,-2.6) node[fill=white] {$P_{2,3,13,a}$ };
		\draw[color=black] (22,-2.6) node[fill=white] {$a=1$ };

		\draw [line width=1pt,color=black] (0,-5)-- (0,-6);
		\draw [line width=1pt,color=black] (0,-6)-- (1,-8);
		\draw [line width=1pt,color=black] (1,-8)-- (22,-8);
		\draw [line width=1pt,color=black] (22,-8)-- (16,-7);
		\draw [line width=1pt,color=black] (16,-7)-- (0,-5);
		
		\draw [fill=black] (0,-5) circle (2.5pt);
		\draw [fill=black] (16,-7) circle (2.5pt);
		\draw [fill=black] (1,-8) circle (2.5pt);
		\draw [fill=black] (22,-8) circle (2.5pt);
		\draw [fill=black] (2,-8) circle (2.5pt);
		\draw [fill=black] (4,-8) circle (2.5pt);
		\draw [fill=black] (6,-8) circle (2.5pt);
		\draw [fill=black] (8,-8) circle (2.5pt);
		\draw [fill=black] (10,-8) circle (2.5pt);
		\draw [fill=black] (12,-8) circle (2.5pt);
		\draw [fill=black] (14,-8) circle (2.5pt);
		\draw [fill=black] (16,-8) circle (2.5pt);
		\draw [fill=black] (18,-8) circle (2.5pt);
		\draw [fill=black] (20,-8) circle (2.5pt);
		\draw [fill=black] (0,-6) circle (2.5pt);
		\draw [fill=black] (2,-6) circle (2.5pt);
		\draw [fill=black] (4,-6) circle (2.5pt);
		\draw [fill=black] (6,-6) circle (2.5pt);
		\draw [fill=black] (8,-6) circle (2.5pt);
		
		\draw[color=black] (19,-6.6) node[fill=white] {$P_{2,3,13,s}$ };

		\draw [line width=1pt,color=black] (0,-9)-- (0,-10);
		\draw [line width=1pt,color=black] (0,-10)-- (1,-12);
		\draw [line width=1pt,color=black] (1,-12)-- (23,-12);
		\draw [line width=1pt,color=black] (23,-12)-- (8,-10);
		\draw [line width=1pt,color=black] (8,-10)-- (0,-9);
		
		\draw [fill=black] (0,-9) circle (2.5pt);
		\draw [fill=black] (23,-12) circle (2.5pt);
		\draw [fill=black] (1,-12) circle (2.5pt);
		\draw [fill=black] (22,-12) circle (2.5pt);
		\draw [fill=black] (2,-12) circle (2.5pt);
		\draw [fill=black] (4,-12) circle (2.5pt);
		\draw [fill=black] (6,-12) circle (2.5pt);
		\draw [fill=black] (8,-12) circle (2.5pt);
		\draw [fill=black] (10,-12) circle (2.5pt);
		\draw [fill=black] (12,-12) circle (2.5pt);
		\draw [fill=black] (14,-12) circle (2.5pt);
		\draw [fill=black] (16,-12) circle (2.5pt);
		\draw [fill=black] (18,-12) circle (2.5pt);
		\draw [fill=black] (20,-12) circle (2.5pt);
		\draw [fill=black] (0,-10) circle (2.5pt);
		\draw [fill=black] (2,-10) circle (2.5pt);
		\draw [fill=black] (4,-10) circle (2.5pt);
		\draw [fill=black] (6,-10) circle (2.5pt);
		\draw [fill=black] (8,-10) circle (2.5pt);
		
		\draw[color=black, opacity=1] (19,-10.6) node[fill=white] {$P_{2,3,13,s_3}$ };
	\end{tikzpicture}
	\caption{Polygons, which are affine unimodular equivalent to the four pseudo-integral polygons with denominator $2$, $3$ interior lattice points and $13$ boundary lattice points, i.e. equivalent to $P_{2,3,13,0}$, $P_{2,3,13,1}$, $P_{2,3,13,s}$ and $P_{2,3,13,s_3}$. }
\end{figure}
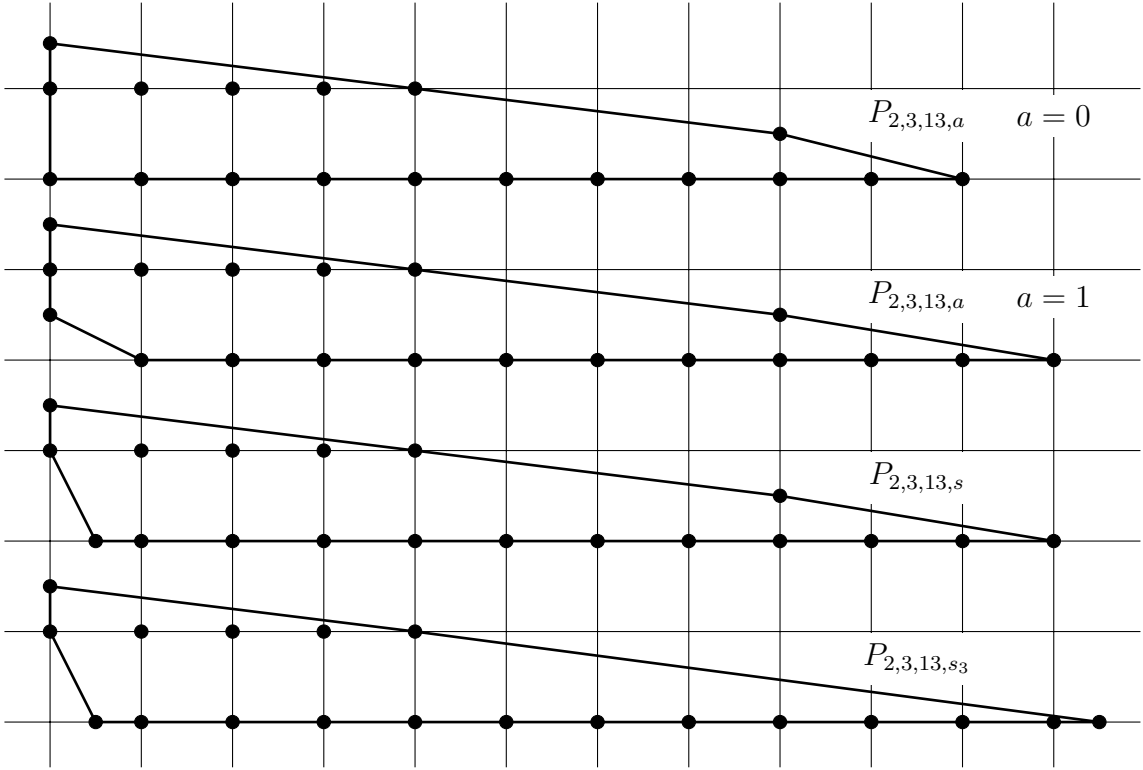

In the second step, we show that if $P$ is realized as subpolygon of the triangle $\conv{(0,\frac{3}{2}), (0,0), (3i+3,0)}$, then $(0,1), (i+1,1)\in P$. Since $P$ is pseudo-integral, we have by \ref{detecting_half_integral_pseudo_integral} for the number of half-integral interior points $i(2P)$ of $P$ that $i(2P)=6i+4$. This is the same number of half-integral interior points as in $\conv{(0,\frac{3}{2}), (0,0), (3i+3,0)}$ because
\begin{align*}
&|\interior{\conv{(0,3),(0,0),(6i+6,0)}}\cap \Z^2|\\=&|\{(l,1) \mid 1\leq l< 4i+4\}\cup \{(l,2) \mid 1\leq l< 2i+2\}|\\
=&6i+4.
\end{align*}
We also know from the pseudo-integrality of $P$ that there must be a lattice point between every two half-integral points on the boundary of $P$, which are not lattice points. Therefore, and to get $i(2P)=6i+4$, we must have $(0,1), (i+1,1)\in P$.

To get the complete description of $P$ in the last step, we distinguish whether $(0,\frac{1}{2})$ and $(2i+2,\frac{1}{2})$ are both points of $P$ or not.

We start with the case where $(0,\frac{1}{2})$ and $(2i+2,\frac{1}{2})$ are both points of $P$. Since $(0,\frac{1}{2})\in P$, the next half-integral point counterclockwise on the boundary of $P$ after $(0,\frac{1}{2})$ must be a lattice point $(a,0), a\in \Z_{\geq 0}$ because of the pseudointegrality. 

To get $b(2P)=2b(P)=4i+14$ the next vertex of $P$ counterclockwise after $(a,0)$ must be $(a+2i+4,0)$. So
\begin{align*}
P\cong P_{2,i,2i+7,a}:=\conv{\left(0,\frac{3}{2}\right), \left(0,\frac{1}{2}\right), (a,0), (a+2i+4,0), \left(2i+2,\frac{1}{2}\right)}.
\end{align*} 
To have $(2i+2,\frac{1}{2})$ really as a boundary point, we must additionally have the inequality $a+2i+4\leq 3i+3$, i.e. $a\leq i-1$. Moreover, we get affine unimodular equivalent polygons if and only if we take $a'=i-1-a$ instead of $a$. So we must restrict to $0\leq a\leq \frac{i-1}{2}$ to have each polygon only once.

In the case that $(0,\frac{1}{2})$ and $(2i+2,\frac{1}{2})$ are not both points of $P$, we can assume after a suitable affine unimodular transformation that $(0,\frac{1}{2})$ is not a point of $P$. To get $i(2P)=6i+4$, we must have $(\frac{1}{2},\frac{1}{2})$ as an interior point of $P$, and so the next half-integral point on the boundary of $P$ counterclockwise after $(0,1)$ must be $(\frac{1}{2},0)$. To get $b(2P)=2b(P)=4i+14$, the next vertex counterclockwise from $P$ after $(\frac{1}{2},0)$ must be $(2i+5,0)$ if $(2i+2,\frac{1}{2})\in P$ and $(2i+\frac{11}{2},0)$ if $(2i+2,\frac{1}{2})\notin P$. So we have
\begin{align*}
P\cong P_{2,(i,2i+7),s}=\conv{\left(0,\frac{3}{2}\right), (0,1), \left(\frac{1}{2},0\right), (2i+5,0), \left(2i+2,\frac{1}{2}\right)}  
\end{align*}
or
\begin{align*}
P\cong \conv{\left(0,\frac{3}{2}\right), (0,1), \left(\frac{1}{2},0\right), \left(2i+\frac{11}{2},0\right), (i+1,1)}  .  
\end{align*}
In the first case we see that $(2i+2,\frac{1}{2})$ is a boundary point only if $2i+5\leq 3i+3$, i.e. $i\geq 2$, so we have this additional constraint. In the second case, we have $(\frac{3}{2}i+\frac{13}{4},\frac{1}{2})$ as a boundary point, so we get $(2i+\frac{3}{2},\frac{1}{2})\in P$, $(2i+2,\frac{1}{2})\notin P$ if and only if $\frac{3}{2}i+\frac{13}{4}=2i+\frac{7}{4}$, i.e. $i=3$.\\

Conversely, we see that all given polygons have the correct number of interior and boundary lattice points by construction and the correct area to satisfy Pick's formula. Moreover, they all have denominator $2$ and lattice points on all edges, so we conclude that they are all pseudo-integral with denominator $2$ and have exactly $2i(P)+7$ boundary lattice points.
\end{proof}

\begin{coro}
For $i\in \Z_{\geq 1}$ there are exactly $\lfloor\frac{i-1}{2}\rfloor +2+\delta(i-3)-\delta(i-1)$ pseudo-integral polygons $P\subseteq \R^2$ with $\denom{P}=2$, $i(P)=i$ and $b(P)=2i(P)+7$, where 
\begin{align*}
	\delta\colon \Z\to \Z, k \mapsto \begin{cases}
		1 & \text{ for } k=0\\
		0 & \text{ else}
	\end{cases}.
\end{align*}
\end{coro}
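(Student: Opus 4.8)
The plan is to read off the count directly from the classification in the preceding theorem, which lists a complete and irredundant set of representatives up to affine unimodular equivalence. Because that theorem is an `if and only if' statement, every pseudo-integral polygon $P$ with $\denom{P}=2$, $i(P)=i$ and $b(P)=2i+7$ is equivalent to exactly one polygon on the list, so it suffices to count the listed polygons as a function of $i$.

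First I would count the main family $P_{2,(i,2i+7),a}$: the parameter $a$ ranges over the integers with $0\leq a\leq\frac{i-1}{2}$, and there are exactly $\lfloor\frac{i-1}{2}\rfloor+1$ of these. Next, the symmetric polygon $P_{2,(i,2i+7),s}$ is present precisely when $i>1$; writing this with the indicator $\delta$, it contributes $1-\delta(i-1)$, since $\delta(i-1)=1$ exactly for $i=1$. Finally, the exceptional polygon $P_{2,(3,13),s_3}$ is present precisely when $i=3$, contributing $\delta(i-3)$. Adding the three contributions yields
\begin{align*}
\left(\lfloor\tfrac{i-1}{2}\rfloor+1\right)+\left(1-\delta(i-1)\right)+\delta(i-3)=\lfloor\tfrac{i-1}{2}\rfloor+2+\delta(i-3)-\delta(i-1),
\end{align*}
which is the asserted formula.

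The only subtlety is to be sure the three types are pairwise inequivalent, so that the contributions add with no overlap. This is already built into the theorem, whose proof splits into the cases distinguished by whether both $(0,\frac{1}{2})$ and $(2i+2,\frac{1}{2})$ lie in $P$, and which restricts $a$ to $0\leq a\leq\frac{i-1}{2}$ precisely so that each polygon is listed once. As a sanity check I would evaluate the formula at $i=1,2,3$, obtaining $1$, $2$ and $4$; the value $4$ agrees with the four polygons drawn in the figure for $i=3$. Hence there is no genuine obstacle here beyond careful bookkeeping of the two indicator terms.
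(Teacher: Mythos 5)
Your proposal is correct and is exactly the intended argument: the paper states this corollary without proof as an immediate consequence of the preceding classification theorem, and your bookkeeping of the three families ($\lfloor\frac{i-1}{2}\rfloor+1$ polygons from the $a$-family, $1-\delta(i-1)$ from the symmetric type, $\delta(i-3)$ from the exceptional one) is the same count, confirmed by the values $1,2,4,3,4,4$ for $i=1,\dotsc,6$ in Classification 2.8.
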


\vspace{15mm}

\textbf{Acknowledgements.} I would like to thank Katharina Kir\'{a}ly for her helpful comments. I would also like to thank Tyrrell B. McAllister for his interest in my work and our discussions on pseudo-integral polygons.

\newpage

\end{document}